\newtheorem{Thm}{Theorem}[section]
\newtheorem{Lem}{Lemma}[section]
\newtheorem{Rek}{Remark}[section]
\newcommand{\R}{\mathbb{R}}
\numberwithin{equation}{section} \numberwithin{figure}{section}
\newenvironment{proof}{\medskip\par\noindent{\bf Proof\/}:}{\qquad
\raisebox{-0.5mm}{\rule{1.5mm}{1mm}}\vspace{6pt}}
\begin{document}
\title{\Large\bf An infinite sequence of localized nodal solutions for Schr\"odinger-Poisson system with double potentials}
\author{
{~~~~Yuanyang Yu$^{a,b}$}\thanks{yuyuanyang18@mails.ucas.ac.cn},  {~~~~Yanheng Ding$^{a,b}$}\thanks{Corresponding author: dingyh@math.ac.cn}\\
\small Institute of Mathematics, Academy of Mathematics and Systems Science\\
\small Chinese Academy of Sciences, Beijing 100190,  P.R.China$^{a}$\\
\small University of Chinese Academy of Sciences, Beijing 100049,  P.R.China$^{b}$\\
}

\date{}

\date{} \maketitle

\noindent{\bf Abstract:}
In this paper, we study the existence of localized sign-changing (or nodal) solutions for the following nonlinear Schr\"odinger-Poisson system
\begin{equation*}
\begin{cases}
-\varepsilon^2 \Delta u+V(x)u+\phi u=K(x)f(u),&\text{in}~\mathbb{R}^3,\\
-\varepsilon^2 \Delta \phi=u^2,&\text{in}~ \mathbb{R}^3,
\end{cases}
\end{equation*}
where $\varepsilon>0$ is small parameters, the linear potential $V$ and nonlinear potential $K$ are bounded and bounded away from zero. By using the penalization method together with the method of invariant sets of descending flow, we establish the existence of an infinite sequence of localized sign-changing solutions which are higher topological type solutions given by the minimax characterization of the symmetric mountain pass theorem and we determine a concrete set as the concentration position of these sign-changing solutions. For single potential, that is, linear potential $V$ or nonlinear potential $K$ is a positive constant, we prove that these localized sign-changing solutions concentrated near a local minimum set of the potential $V$ or a local maximum set of the potential $K$. Moreover, our method is works for the following nonlinear Schr\"odinger equation
\begin{equation*}
-\varepsilon^2 \Delta u+V(x)u=K(x)f(u),~\text{in}~\mathbb{R}^N
\end{equation*}
where $N\geq 2$. The result generalizes the result by Chen and Wang (Calc.Var.Partial Differential Equations 56:1-26, 2017).
\par
\vspace{6mm} \noindent{\bf Keywords:} Schr\"odinger-Poisson system, Localized nodal solutions, Descending flow, Concentration.

\vspace{1mm} \noindent{\bf AMS subject classification:} 35J20, 35J60, 35Q55.

\section{Introduction and main result}
Consider the following nonlinear Schr\"odinger equation
\begin{equation}\label{eq1.1}
i\hbar \partial_t \psi=-\hbar^2\Delta \psi +a(x)\psi +b(x)\phi \psi-f(x,\psi),
\end{equation}
coupled with the Poisson equation
\begin{equation}\label{eq1.2}
-\hbar^2 \Delta \phi=b(x)|\psi|^2,
\end{equation}
where $\hbar>0$ is the Planck constant, the potential $b\in C(\mathbb{R}^3,\mathbb{R}^+)$ and the unknown function $\psi$ is complex that is defined on $\mathbb{R}^3\times [0,\infty)$.  The real functions $a$ and $b$ are defined on $\mathbb{R}^3$ and represent the effective potential and the electric potential, respectively. $f(x,e^{i\theta}\xi)=e^{i\theta}f(x,\xi)$ for $\theta,\xi\in \mathbb{R}$ is a nonlinear function which describes the interaction among many particles. Such problems have been widely investigated due to their deep physics backgrounds. It was introduced in \cite{Benguria-Brezis-Lieb1981CMP,Lieb1981RMP} as a model used in the Thomas-Fermi-von Weizs$\ddot{a}$cker theory in quantum mechanics, it also appeared in semiconductor theory \cite{Benci-Fortunato1998TMNA,Benci-Fortunato2002RMP} to describe solitary waves for nonlinear stationary equations of Schr\"odinger type interacting with an electrostatic field. For more details on the physical aspects of this problem we refer the readers to \cite{Lions1987CMP,Markowich-Ringhofer-Schmeiser1990book,Salvatore2006ANS}.
\par
Standing wave solutions for problem \eqref{eq1.1} and \eqref{eq1.2} have the ansatz form $\psi(x,t)=u(x)e^{\frac{-i\omega t}{\hbar}}$, where $\omega\in \mathbb{R}$ is a constant. And it leads to a system that
\begin{equation}\label{eq1.3}
\begin{cases}
-\varepsilon^2 \Delta u+V(x)u+b(x)\phi u=f(x,u),&\text{in}~ \mathbb{R}^3,\\
-\varepsilon^2 \Delta \phi=b(x)u^2,&\text{in}~ \mathbb{R}^3,
\end{cases}
\end{equation}
with the potential $V(x)=a(x)-\omega$ and $\varepsilon=\hbar$. In the past few decades, the system like or similar to \eqref{eq1.3} has been studied extensively by means of variational tools. See \cite{Azzollini-Avenia-Pomponio2010AIHPNL,Aprile-Mugnai2004PRSE,Ruiz2006JFA,Wang-Zhou2007DCDS,Yang-Ding2010SCM,Zhao-Liu-Zhao2013JDE,Zhao-Zhao2008JMAA} and their references for the existence and multiplicity of solutions. The concentration behavior of solution for system \eqref{eq1.3} has attracted many attentions. By using min-max method and Ljusternik-Schnirelmann theory, He \cite{He2011ZAMP} studied the concentration behavior of positive solutions for system \eqref{eq1.3} with $b(x)\equiv1$ and general nonlinearity $f(u)$ which is subcritical and super 4-linear growth, and obtained the relation between the number of positive solutions and the topology of the global minimum set of $V$ under the assumption that $t^{-3}f(t)$ is increasing on $(0,\infty)$. The critical
case was considered in \cite{He-Zou2012JMP}.  When $b(x)\equiv1$ and $f(x,u)=\lambda|u|^{p-2}u+|u|^4u$ with $3 <p\leq 4$, which it does not satisfy the monotone assumption or Ambrosetti-Rabinowtiz condition, He and Li \cite{He-Li2015AASFM} construct a family of positive solution which concentrates around a local minimum of $V$ as $\varepsilon\rightarrow 0$ under a local condition: there is a bounded domain $\Omega \subset \mathbb{R}^3$ such that
\begin{equation*}
0<\inf_{\Omega}V(x)<\min_{\partial \Omega}V(x).
\end{equation*}
Wang et al. \cite{Wang-Tian-Xu-Zhang2013CVPDE} studied the existence and the concentration behavior of ground state solutions for a subcritical problem with competing potentials. Zhang and Xia \cite{Zhang-Xia2016JDE} considered the critical frequency case, that is $V$ satisfies
\begin{equation*}
0=\inf\limits_{x\in\mathbb{R}^3}V(x)<\liminf\limits_{|x|\rightarrow\infty}V(x)=V_{\infty},
\end{equation*}
and obtained the existence and multiplicity of ground state solutions for system \eqref{eq1.3} with $f(x,u)=|u|^{p-2}u$ and $p\in (4,6)$, which converge to least energy solutions of the associated with limit problem with $V\equiv 0$. Recently, Yang \cite{Yang2020NA} considered the following critical Schr\"odinger-Poisson system
\begin{equation*}
\begin{cases}
-\varepsilon^2 \Delta u+V(x)u+b(x)\phi u=P(x)g(u)+Q(x)|u|^4u,&\text{in}~ \mathbb{R}^3,\\
-\varepsilon^2 \Delta \phi=b(x)u^2,&\text{in}~ \mathbb{R}^3,
\end{cases}
\end{equation*}
and obtained the existence of ground state solutions, which converge to least energy solutions of the associated with limit problem with $b\equiv 0$.
The following Schr\"odinger-Poisson system
\begin{equation}\label{eq1.4}
\begin{cases}
-\varepsilon^2 \Delta u+V(x)u+b(x)\phi u=u^p,&\text{in}~\mathbb{R}^3,\\
-\Delta \phi=b(x)u^2,&\text{in}~\mathbb{R}^3,
\end{cases}
\end{equation}
has also attracted many scholars' attention. Ianni and Vaira \cite{Ianni-Vaira2008ANS} obtained the existence of positive bound state solutions of system \eqref{eq1.4} with $p\in (1,5)$, which concentrate at a non-degenerate local minimum or maximum of $V$ by using a Lyapunov-Schmidt reduction method.
Ruiz and Vaira \cite{Ruiz-Vaira2011RMI} constructed multi-bump solutions of system \eqref{eq1.4} with $b(x)=1$ and $p\in (1,5)$ and these bumps concentrate around a local minimum of the potential $V$. D'Aprile and Wei \cite{Aprile-Wei2005SIAM} showed that system \eqref{eq1.4} with $V(x)=b(x)=1$ possesses a family of radially symmetric solutions concentrating around a sphere as $\varepsilon\rightarrow 0$ for $p\in (1,\frac{11}{7})$.
\par
Another topic which has received increasingly interest of late years is the existence of sign-changing solutions of system \eqref{eq1.3} with $\varepsilon=1$. Using a Nehari-type manifold and gluing solution pieces, Kim and Seok \cite{Kim-Seok2012CCM} proved the existence of radial sign-changing solutions with prescribed numbers of nodal domains for \eqref{eq1.3} in the case where $V(x)=b(x)=1,f(x,u)=|u|^{p-2}u$ with $p\in (4,6)$. For same nonlinearity, using the constraint variational method and the Brouwer degree theory, Wang and Zhou \cite{Wang-Zhou2015CVPDE} proved that system \eqref{eq1.3} has a sign-changing solution under suitable assumptions. Using the method of invariant sets of descending flow, Liu et al. \cite{Liu-Wang-Zhang2016AdM} obtained the existence of infinitely many sign-changing solutions for $3$-superlinear nonlinearity and a coercive potential
 function. Huang et al. \cite{Huang-Rocha-Chen2013JMAA} consider the following critical Schr\"odinger-Poisson system
\begin{equation}\label{eq1.5}
\begin{cases}
-\Delta u+u+b(x)\phi u=k(x)|u|^4u+\mu h(x)u,&\text{in}~ \mathbb{R}^3,\\
-\Delta \phi=b(x)u^2,&\text{in}~\mathbb{R}^3,
\end{cases}
\end{equation}
and proved the existence of at least a pair of fixed sign solutions and a pair of sign-changing solutions for system \eqref{eq1.5} under some suitable conditions on the nonnegative functions $b, k$ and $h$. Later, Zhong and Tang \cite{Zhong-Tang2018NAWRA} also obtained system \eqref{eq1.5} with $k(x)\equiv1$ possesses at
least one ground state sign-changing solution by constraint variational method and its energy is strictly larger than twice that of ground state solutions.
\par
\vspace{3mm}
In quantum physics, the parameter $\varepsilon$ is generically quite small. In general, one expects to recover some classical dynamics in the semi-classical limit r$\acute{e}$gime. The semi-classical limit is well understood for the nonlinear Schr\"odinger equation, i.e., equation \eqref{eq1.1} with $b(x)\equiv0$,
\begin{equation}\label{eq1.6}
-\varepsilon^2\Delta u+V(x)u=f(u),~\text{in}~\mathbb{R}^N.
\end{equation}
The study of equation \eqref{eq1.6} goes back to the pioneer works by Floer-Weinstein \cite{Floer-Weinstein1986JFA} and Rabinowitz \cite{Rabinowitz1992ZAMP}. And since then it has been studied extensively under various assumptions on the potential and the nonlinearity, see for example \cite{Ambrosetti-Badiale-Cingolani1997ARMA,Ambrosetti-Malchiodi-Secchi2001ARMA,Bartsch-Wang1995CPDE,Byeon-Wang2002ARMA,Byeon-Wang2003CVPDE,DelPino-Felmer1996CVPDE,DelPino-Felmer1998AIHPNL,Wang1993CMP} and references therein for concentration behavior od solution. In particular, when $V$ has a local minimum point P, ground state and bound state positive solutions with a single spike concentrating near a local minimum point can be constructed. These solutions are mountain pass type critical points having Morse index 1 and they are positive solutions obtained as perturbations of mountain pass positive solutions of the corresponding limiting equation
\begin{equation}\label{eq1.7}
-\Delta u+V(P)u=f(u),~\text{in}~\mathbb{R}^N.
\end{equation}
 Positive solutions with multi-peaks concentrating near a maximum point or a saddle point of $V$ are also constructed by using the Lyapunov-Schmidt reduction method (e.g., \cite{Kang-Wei2000ADE}). These solutions have multiple peaks clustered at the given maximum point of $V$ or a saddle point of $V$ and also called cluster solutions. Furthermore, Kang and Wei \cite{Kang-Wei2000ADE} showed surprisingly that there can not be multi-peak positive solutions concentrating near a local minimum point. Therefore, in order to have other localized solutions near a local minimum point one has to look for sign-changing solutions with concentrations. Progress has been made in recent years in this direction, see \cite{Alves-Soares2004JMAA,Bartsch-Clapp-Weth2007MA,Aprile-Pistoia2007MA,Aprile-Pistoia2009AIHPNL,Aprile-Ruiz2011MZ} and references therein, in which a finite number of localized sign-changing solutions can be constructed by various different methods. All the known localized sign-changing solutions largely depends on the Lyapunov-Schmidt reduction method or the non-degeneracy condition of the mountain pass solutions of the limit equation \eqref{eq1.7}. However, Chen and Wang \cite{Chen-Wang2017CVPDE} proved that there exists an infinite sequence of sign-changing solutions which concentrate at any given strict local minimum point of $V$ as $\varepsilon\rightarrow 0$ without using any non-degeneracy conditions and there solutions are higher topological type (in the sense that critical points obtained by the symmetric mountain pass theorem using higher dimensional symmetric linking structures) solutions given by the minimax characterization of the symmetric mountain pass theorem. The critical case was considered in \cite{Chen-Liu-Wang2019JFA}. Their method is quite effective and can be generalized to a class of quasilinear Schr\"odinger equation with subcritical nonlinearity growth \cite{Liu-Liu-Wang2019JDE} and p-Laplacian equations with critical exponents \cite{Gao-Guo2020JMP}.
\par
\vspace{3mm}
Motivated by the works described above, the aim of this paper is to continue to study the existence and
concentration behavior of sign-changing solutions for Schr\"odinger-Poisson system, that is, we consider the following nonlinear Schr\"odinger-Poisson system
\begin{equation}\label{eq1.8}
\begin{cases}
-\varepsilon^2\Delta u+V(x)u+\phi u=K(x)f(u),&\text{in}\ \mathbb{R}^3,\\
-\varepsilon^2\Delta \phi=u^2,&\text{in}\ \mathbb{R}^3,
\end{cases}
\end{equation}
where $\varepsilon>0$ is a small parameter, the linear potential $V$ and nonlinear potential $K$ are bounded and
bounded away from zero.
\par
An interesting question, which motivates the present work, is whether one can find multiple sign-changing solutions of nonlinear Schr\"odinger-Poisson system with double potentials, and these sign-changing solutions concentrate around critical points of the potential, and whether the sign-changing solutions of higher topological type can be localized. As far as we know, such a problem was not considered before. There are some difficulties in such a problem. The first one is that such problem involves two different potentials which make our problem more complicated than that of \cite{Chen-Wang2017CVPDE}, and we need to find a suitable set as the concentration position. The second one is, as we will see, the term $\phi$ is nonlocal, and this need more detailed estimates on the local Pohozaev identity (see Lemma \ref{Lem4.7}) in order to show that the sign-changing solutions concentrate near a given concrete set.
\par
In this paper, we will given an answer to the above question. First, we introduce a modified problem by the penalization method introduction by del Pino and Felmer \cite{DelPino-Felmer1996CVPDE} and we obtain the existence of an infinite sequence of localized sign-changing solutions of modified problem by using the method of invariant sets of descending flow for $\varepsilon>0$ small enough. To study the concentration behavior of these solutions, we establish the $L^\infty$ and decay estimate of these solutions. At last, we determine a concrete set as the concentration position of these solutions and prove these localized sign-changing solutions are indeed solutions of original problem.

\par
To state our main results, we need the following assumptions
\begin{itemize}
\item[$(f_1)$]  $f\in C^1(\mathbb{R},\mathbb{R}),f(t)=o(t)$ as $|t|\rightarrow 0$.
\item[$(f_2)$]  there are constants $c_1,c_2>0$ and $4<p<2^*(=6)$ such that for all $t\in \mathbb{R}$
\begin{equation*}
|f(t)|\leq c_1+c_2|t|^{p-1}.
\end{equation*}
\item[$(f_3)$]  there is a constant $\mu>4$ such that for any $t\in \mathbb{R}\setminus \{0\}$
\begin{equation*}
0<\mu F(t)\leq f(t)t,~\text{where}~F(t)=\int_0^tf(\tau)d\tau.
\end{equation*}
\item[$(f_4)$]  $f$ is odd in $t$, i.e., $f(-t)=-f(t)$ for all $t\in \mathbb{R}$.
\end{itemize}
\begin{Rek}\label{Rek1.1}
Note that, $(f_1)$ and $(f_2)$ imply that for each $\delta>0$, there is $C(\delta)>0$ such that
\begin{equation}\label{eq1.9}
f(t)\leq \delta t+C(\delta) t^{p-1}~~~\text{and}~~~F(t)\leq\delta t^2+C(\delta)t^p,~~\forall t\geq 0.
\end{equation}
By $(f_3)$, we deduce that there exist $C_0,C_1>0$ such that
\begin{equation}\label{eq1.10}
F(t)\geq C_0t^\mu-C_1t^2,~\forall t\geq 0.
\end{equation}
\end{Rek}
We also assume the potentials $V$ and $K$ satisfy that
\begin{itemize}
\item[$(V)$]  $V\in C^1(\mathbb{R}^3,\mathbb{R})$ and there are two constants $a_1,a_2$ with $0<a_1<a_2$ such that
\begin{equation*}
a_1\leq V(x)\leq a_2,~~~\forall x\in \mathbb{R}^3.
\end{equation*}
\item[$(K)$]  $K\in C^1(\mathbb{R}^3,\mathbb{R})$ and there are two constants $b_1,b_2$ with $0<b_1<b_2$ such that
\begin{equation*}
b_1\leq K(x)\leq b_2,~~~\forall x\in \mathbb{R}^3.
\end{equation*}
\end{itemize}
In the following, we propose two kinds of assumptions. In the first one, we assume
\begin{itemize}
\item[$(VK1)$]  There exists a bounded domain $\Lambda\subset \mathbb{R}^3$ with smooth boundary $\partial \Lambda$ such that
\begin{equation}\label{eq1.11}
\vec{n}(x)\cdot \nabla V(x)>0,~~~\forall x\in \partial \Lambda,
\end{equation}
\begin{equation}\label{eq1.12}
\nabla K(x)\cdot \nabla V(x)<0,~~~\forall x\in \partial \Lambda,
\end{equation}
and where $\vec{n}(x)$ denotes the outward normal vector of $\partial \Lambda$ at $x$.
\end{itemize}
Note that \eqref{eq1.11} is satisfied if $V$ has an isolated local minimum set, i.e., $V$ has a local trapping potential well. Under the assumption \eqref{eq1.11}, the set of critical points of $V$ inside $\Lambda$
\begin{equation}\label{eq1.13}
\mathcal{A}=\{x\in \Lambda|\nabla V(x)=0\},
\end{equation}
is a nonempty and $\mathcal{A}$ is a compact subset of $\Lambda$. Without loss of generality, we may assume $0\in \mathcal{A}$. In order to describe concentration phenomena of localized sign-changing solutions, we define the concentration set by $U(\delta)=\{x\in \Lambda|\text{dist}(x,\partial \Lambda)>\delta\}$ for $\delta>0$ small.
\par
To state our results we first set some notations. For any subset $B\subset\mathbb{R}^3$ and $\delta> 0$, we define
\begin{equation*}
B^\delta=\{x\in \mathbb{R}^3|\text{dist}(x,B):=\inf_{y\in B}|x-y|<\delta\}.
\end{equation*}
A function $u\in H^1(\mathbb{R}^3)$ is called sign-changing if $u^+\neq 0$ and $u^-\neq 0$, where
\begin{equation*}
u^+(x)=\max\{u(x),0\}~~\text{and}~~u^-(x)=\min\{u(x),0\}.
\end{equation*}
\par
The first result of this paper is stated as follows.
\begin{Thm}\label{Thm1.1}
Assume that $(f_1)$-$(f_4), (V), (K)$ and $(VK1)$ hold. Then for any positive integer $k$, there exists $\varepsilon_k > 0$ such that if $0 <\varepsilon <\varepsilon_k$, system \eqref{eq1.8} has at least $k$ pairs of sign-changing solutions $\pm v_{j,\varepsilon}, j = 1, 2, \cdots, k$. Moreover, there exists $\delta_0>0$, for any $\delta\in (0,\delta_0)$, there exist $c = c(\delta, k) > 0$ and $C = C(\delta, k) > 0$ such that
\begin{equation*}
|v_{j,\varepsilon}(x)|\leq C\text{exp}\bigg(-\frac{c\text{dist}(x,U(\delta)}{\varepsilon}\bigg)~\text{for}~x\in \mathbb{R}^3,~j=1,\cdots,k.
\end{equation*}
\end{Thm}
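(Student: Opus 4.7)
The plan is to combine the del Pino--Felmer penalization scheme with the invariant-sets-of-descending-flow machinery used in \cite{Chen-Wang2017CVPDE}, supplemented by a local Pohozaev identity to control the nonlocal Poisson term. After rescaling $y=x/\varepsilon$ I would modify the nonlinearity outside the dilated region $\Lambda_\varepsilon:=\Lambda/\varepsilon$ in the standard way, replacing $K(\varepsilon y)f(s)$ by a function $\tilde g_\varepsilon(y,s)$ that coincides with $K(\varepsilon y)f(s)$ on $\Lambda_\varepsilon$ and satisfies $\tilde g_\varepsilon(y,s)\leq V(\varepsilon y)s/\ell$ for $s>0$ and $y\notin\Lambda_\varepsilon$, with $\ell>1$ chosen using $\mu>4$. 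Solving the second equation explicitly reduces the penalized system to a single equation governed by an even $C^1$ energy functional $I_\varepsilon$ on $H^1(\mathbb{R}^3)$ carrying a nonlocal quartic term.

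To produce the sign-changing critical points I would apply the symmetric mountain pass theorem in the invariant-sets-of-descending-flow framework of \cite{Liu-Wang-Zhang2016AdM}: construct an auxiliary operator $A_\varepsilon$ absorbing the nonlocal term through $\phi_u$, verify that the positive and negative cones $P^\pm$ are invariant (up to a small enlargement) under the associated pseudo-gradient flow, and for each $k$ extract a pair $\pm v_{j,\varepsilon}$, $j=1,\ldots,k$, from a minimax class of genus at least $k$ lying outside $P^+\cup P^-$. The Palais--Smale condition against the quartic nonlocal term uses $(f_3)$ with $\mu>4$, and a test-function computation supported in $\Lambda_\varepsilon$ provides an $\varepsilon$-uniform upper bound on the minimax levels $c_{k,\varepsilon}$, which is what ultimately controls the solutions.

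The concentration analysis is the heart of the argument. A uniform $H^1$-bound from the level estimate together with $(f_3)$, combined with Moser iteration, yields uniform $L^\infty$-bounds on the $v_{j,\varepsilon}$. Translating about maxima and passing to a limit yields a nontrivial profile solving a limit equation of the form $-\Delta v+V_0 v+\phi_v v=K_0 f(v)$ on $\mathbb{R}^3$. To force the concentration points into $U(\delta)$ I would apply the local Pohozaev identity (Lemma~\ref{Lem4.7}) on small balls around candidate concentration points; its leading order contains gradient terms in $V$ and $K$ together with boundary integrals coming from $V$, $K$ and, most delicately, from the nonlocal potential $\phi_{v_{j,\varepsilon}}$. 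Exponential decay away from the concentration set makes the local boundary terms negligible, while the explicit Newtonian representation of $\phi$ and the decay of $u^2$ control the nonlocal tail; the resulting identity forces $\nabla V=0$ at the concentration points, so by~\eqref{eq1.11} and~\eqref{eq1.12} they must lie in $\mathcal{A}\subset U(\delta)$.

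Once the location is pinned down, the exponential decay estimate follows from a comparison argument: outside a neighborhood of the concentration set the penalized nonlinearity is dominated by $V(x)v_{j,\varepsilon}/\ell$, so $v_{j,\varepsilon}$ is a subsolution of a linear equation with strictly positive mass, and an exponential barrier yields $|v_{j,\varepsilon}(x)|\leq C\exp(-c\,\mathrm{dist}(x,U(\delta))/\varepsilon)$. For $\varepsilon$ small this forces $|v_{j,\varepsilon}|$ to be so small on $\mathbb{R}^3\setminus\Lambda$ that the penalization is inactive there, so $v_{j,\varepsilon}$ actually solves the original system~\eqref{eq1.8}. The main obstacle, as I see it, is deriving a clean local Pohozaev identity for the coupled system and controlling the nonlocal tail contributions that couple its boundary to the rest of $\mathbb{R}^3$; the remaining steps are (technical but well-trodden) adaptations of the local framework of \cite{Chen-Wang2017CVPDE}.
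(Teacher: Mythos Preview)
Your overall architecture matches the paper's: penalization, the invariant-sets-of-descending-flow machinery to produce sign-changing critical points with uniformly bounded minimax levels, a profile decomposition, a local Pohozaev identity to locate the concentration, and finally exponential decay to deactivate the penalization. Two points deserve comment.

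\medskip
\textbf{Penalization scheme.} You propose the del Pino--Felmer truncation of the nonlinearity outside $\Lambda_\varepsilon$, whereas the paper leaves $f$ untouched and instead adds an outer $L^2$ penalty
\[
Q_\varepsilon(u)=\Big(\int_{\mathbb R^3}\chi_\varepsilon u^2\,dx-1\Big)_+^{\beta}
\]
to the energy. Both are viable, but the paper's choice meshes better with the Pohozaev step: the penalty enters the local identity as $\int u^2(\nabla\chi_\varepsilon\cdot\vec t_n)\,dx$, and condition~\eqref{eq1.11} forces this term to have the right sign when $\vec t_n=\nabla V(\varepsilon_n y_n)$ (see \eqref{eq4.28}). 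With a truncated nonlinearity you would instead have to control a transition layer across $\partial\Lambda_\varepsilon$ in the identity, and to re-do the $(PS)$ and invariance arguments with a modified $f$ that no longer satisfies $(f_3)$ globally.

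\medskip
\textbf{The Pohozaev conclusion.} Your reading of this step contains a genuine gap. With two potentials, the local identity on a ball around a concentration point $y$ contains both a $\nabla V\cdot\vec t$ contribution and a $\nabla K\cdot\vec t$ contribution (cf.\ \eqref{eq4.27}), and one cannot conclude $\nabla V(y)=0$: the $\nabla K$ term does not disappear and there is no reason for it to be negligible. What the paper actually does is argue by contradiction that the concentration point lies near $\partial\Lambda$, set $\vec t_n=\nabla V(\varepsilon_n y_{j,\varepsilon_n}^{i_0})$, and then use \eqref{eq1.12}, extended to a neighborhood of $\partial\Lambda$ as in \eqref{eq4.21}, to make the $F(u)\,\nabla K\cdot\vec t_n$ term on the right nonpositive (see \eqref{eq4.24}, \eqref{eq4.30}), while the $u^2\,\nabla V\cdot\vec t_n$ term on the left is strictly positive of order $\varepsilon_n$ and all boundary and nonlocal tails are exponentially small. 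This only excludes concentration near $\partial\Lambda$ and places the limiting points in $U(\delta_0)$, not in $\mathcal A$. That weaker localization is exactly what Theorem~\ref{Thm1.1} asserts; concentration in $\mathcal A$ is obtained only in Theorem~\ref{Thm1.3}, where $K\equiv K_0$ and the $\nabla K$ term vanishes.
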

\par
\vspace{3mm}
Secondly, we set the dual case of the first one by supposing that
\begin{itemize}
\item[$(VK2)$]  There exists a bounded domain $\Lambda\subset \mathbb{R}^3$ with smooth boundary $\partial \Lambda$ such that
\begin{equation}\label{eq1.14}
\vec{n}(x)\cdot \nabla K(x)<0,~~~\forall x\in \partial \Lambda,
\end{equation}
\begin{equation}\label{eq1.15}
\nabla K(x)\cdot \nabla V(x)<0,~~~\forall x\in \partial \Lambda.
\end{equation}
\end{itemize}
\par
Similarly, \eqref{eq1.14} is satisfied if $K$ has an isolated local maximum set. Under the assumption \eqref{eq1.14}, the set of critical points of $K$ inside $\Lambda$
\begin{equation}\label{eq1.16}
\mathcal{B}=\{x\in \Lambda|\nabla K(x)=0\},
\end{equation}
is nonempty and $\mathcal{B}$ is a compact subset of $\Lambda$. In this case, we also assume $0\in \mathcal{B}$.
\par
We have a dual result of Theorem \ref{Thm1.1}.
\begin{Thm}\label{Thm1.2}
Assume that $(f_1)$-$(f_4), (V), (K)$ and $(VK2)$ hold. Then for any positive integer $k$, there exists $\varepsilon_k > 0$ such that if $0 <\varepsilon <\varepsilon_k$, system \eqref{eq1.8} has at least $k$ pairs of sign-changing solutions $\pm v_{j,\varepsilon}, j = 1, 2, \cdots, k$. Moreover, there exists $\delta_0>0$, for any $\delta\in (0,\delta_0)$, there exist $c = c(\delta, k) > 0$ and $C = C(\delta, k) > 0$ such that
\begin{equation*}
|v_{j,\varepsilon}(x)|\leq C\text{exp}\bigg(-\frac{c\text{dist}(x,U(\delta)}{\varepsilon}\bigg)~\text{for}~x\in \mathbb{R}^3,~j=1,\cdots,k.
\end{equation*}
\end{Thm}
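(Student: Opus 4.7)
The plan is to follow the four-stage scheme carried out for Theorem \ref{Thm1.1}, with modifications confined to the local Pohozaev step that controls concentration at $\partial \Lambda$. The stages are: (i) a del Pino--Felmer penalization of the nonlinearity outside a slight neighborhood of $\Lambda$; (ii) production of $k$ pairs of sign-changing critical points of the penalized functional via invariant sets of descending flow combined with the symmetric mountain pass theorem, yielding the higher topological type solutions; (iii) uniform $L^\infty$ and exponential decay estimates; and (iv) a local Pohozaev argument, now using $(VK2)$ in place of $(VK1)$, to show these critical points concentrate strictly in the interior of $\Lambda$ and hence solve the original system \eqref{eq1.8}.

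For stage (i), I would replace $K(x)f(u)$ outside a thin enlargement of $\Lambda$ by a truncated nonlinearity $\widetilde g_\varepsilon(x,u)$ satisfying $\widetilde g_\varepsilon(x,u) u \leq V(x) u^2/\ell$ for some fixed $\ell > \mu/(\mu-2)$, ensuring that the penalized energy $I_\varepsilon$ is well defined on $H^1(\mathbb{R}^3)$, is even by $(f_4)$ and the nonnegativity/evenness of the nonlocal coupling, and enjoys the symmetric mountain pass geometry. Stage (ii) proceeds as in \cite{Liu-Wang-Zhang2016AdM,Chen-Wang2017CVPDE}: open convex cones $P^\pm_\varepsilon$ of approximately positive/negative functions are introduced together with an auxiliary operator whose negative pseudo-gradient flow leaves $\pm P^\pm_\varepsilon$ invariant; the Poisson coupling is nonnegative and of lower order in the invariance estimates, so the construction carries over verbatim and produces $k$ pairs $\pm v_{j,\varepsilon}$ at symmetric minimax levels $c_{j,\varepsilon} \leq C_k \varepsilon^3$. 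Stage (iii) combines Moser iteration on the penalized equation with a standard comparison argument against the exterior equation to yield the stated exponential decay away from $U(\delta)$.

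The main obstacle, and the only place where $(VK2)$ enters in place of $(VK1)$, is stage (iv). Adapting Lemma \ref{Lem4.7} to this dual setting, I would test the first equation against $\xi \cdot \nabla u$ for a family of smooth vector fields $\xi$ and integrate over $\Lambda$, producing a local Pohozaev identity whose bulk part is, schematically,
\begin{equation*}
\int_\Lambda (\xi \cdot \nabla V)\, u^2\, dx \;-\; 2\int_\Lambda (\xi \cdot \nabla K)\, F(u)\, dx \;+\; \mathcal{N}_\varepsilon[\phi,u;\xi] \;=\; \varepsilon^2\, \mathcal{B}_\varepsilon[\xi;u,\phi],
\end{equation*}
where $\mathcal{N}_\varepsilon$ encodes the nonlocal corrections generated by the Poisson coupling and $\mathcal{B}_\varepsilon$ is a boundary flux integral on $\partial \Lambda$. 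In Theorem \ref{Thm1.1} one chose $\xi$ to extend $\nabla V$ and used $\vec n \cdot \nabla V > 0$ from \eqref{eq1.11}; here I would instead choose $\xi$ to extend $\nabla K$, so that \eqref{eq1.14} yields $\xi \cdot \vec n < 0$ on $\partial \Lambda$ and controls the sign of $\mathcal{B}_\varepsilon$, while \eqref{eq1.15} fixes the sign of $\xi \cdot \nabla V$ in a tubular neighborhood of $\partial \Lambda$ by continuity. The delicate technical point I anticipate as the hard part is estimating $\mathcal{N}_\varepsilon$: after the natural rescaling $y = x/\varepsilon$, $\phi$ solves a Poisson equation whose boundary contributions on $\partial \Lambda$ must be shown to be exponentially small in $1/\varepsilon$, using the exponential decay of $v_{j,\varepsilon}$ from stage (iii) together with Newtonian potential and Calder\'on--Zygmund estimates on $\phi$. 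Once $\mathcal{N}_\varepsilon$ and $\mathcal{B}_\varepsilon$ are shown to be negligible relative to the leading signed bulk terms, the identity forces the concentration set of $v_{j,\varepsilon}$ to lie in the interior of $\Lambda$, so the truncation is never activated for $\varepsilon$ sufficiently small and the $v_{j,\varepsilon}$ are genuine sign-changing solutions of \eqref{eq1.8} satisfying the claimed exponential decay bound.
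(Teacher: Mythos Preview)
Your overall strategy matches the paper's, which simply declares that Theorem~\ref{Thm1.2} follows by the argument for Theorem~\ref{Thm1.1} with the Pohozaev vector field switched from one based on $\nabla V$ to one based on $\nabla K$. Two specifics of your outline diverge from the paper, and the second is a genuine gap.

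First, the paper does not truncate the nonlinearity as you propose; it adds the penalty $Q_\varepsilon(u)=\bigl(\int\chi_\varepsilon u^2-1\bigr)_+^\beta$ to the energy (see \eqref{eq2.4}). This additive penalty is what directly forces $\int_{\mathbb{R}^3\setminus(\Lambda_\varepsilon)^\delta}u^2\leq C\varepsilon^6$ (Lemma~\ref{Lem4.2}) and what makes the descending-flow verifications of Section~3 run; with a nonlinearity truncation those steps would have to be reworked. Second, and more importantly, the paper's local Pohozaev identity \eqref{eq4.27} is taken on a ball $B(y_{j,\varepsilon_n}^{i_0},\delta_0\varepsilon_n^{-1})$ centered at a putative boundary concentration point, not on $\Lambda$. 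The boundary integrals on $\partial B$ are killed by the annular exponential decay of Lemma~\ref{Lem4.6}, which is available \emph{before} one knows where the concentration points lie; your plan to integrate over $\Lambda$ and control $\mathcal B_\varepsilon$ via the sign of $\xi\cdot\vec n$ on $\partial\Lambda$ is circular, since smallness of $u$ on $\partial\Lambda$ is exactly the conclusion you are after. In the paper's framework the dual choice for $(VK2)$ is $\vec t_n=-\nabla K(\varepsilon_n y_{j,\varepsilon_n}^{i_0})$: then \eqref{eq1.15} gives $\nabla V(\varepsilon_n x)\cdot\vec t_n>0$ near $y_{j,\varepsilon_n}^{i_0}$, the term $\int F(u)(\nabla K\cdot\vec t_n)\,dx\leq 0$ since $\nabla K\cdot\vec t_n\approx -|\nabla K|^2$, and condition \eqref{eq1.14} enters \emph{only} to guarantee $\nabla\chi_{\varepsilon_n}\cdot\vec t_n\geq0$ in the bulk penalty term, exactly as \eqref{eq1.11} is used to obtain \eqref{eq4.28}.
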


Next we consider single potential, that is, we consider the following nonlinear Schr\"odinger-Poisson system
\begin{equation}\label{eq1.17}
\begin{cases}
-\varepsilon^2 \Delta u+V(x)u+\phi u=K_0f(u),&\text{in}~\mathbb{R}^3,\\
-\varepsilon^2 \Delta \phi=u^2,&\text{in}~\mathbb{R}^3,
\end{cases}
\end{equation}
and
\begin{equation}\label{eq1.18}
\begin{cases}
-\varepsilon^2 \Delta u+V_0u+\phi u=K(x)f(u),&\text{in}~\mathbb{R}^3,\\
-\varepsilon^2 \Delta \phi=u^2,&\text{in}~\mathbb{R}^3,
\end{cases}
\end{equation}
where $V_0$ and $K_0$ are two positive constants.
\begin{Thm}\label{Thm1.3}
Assume that $(f_1)$-$(f_4), (V)$ and \eqref{eq1.11} in $(VK1)$ hold. Then for any positive integer $k$, there exists $\varepsilon_k > 0$ such that if $0 <\varepsilon <\varepsilon_k$, system \eqref{eq1.17} has at least $k$ pairs of sign-changing solutions $\pm v_{j,\varepsilon}, j = 1, 2, \cdots, k$. Moreover, for any $\delta> 0$, there exist $c = c(\delta, k) > 0$ and $C = C(\delta, k) > 0$ such that
\begin{equation*}
|v_{j,\varepsilon}(x)|\leq C\text{exp}\bigg(-\frac{c\text{dist}(x,\mathcal{A}^\delta)}{\varepsilon}\bigg)~\text{for}~x\in \mathbb{R}^3,~j=1,\cdots,k.
\end{equation*}
\end{Thm}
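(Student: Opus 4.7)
The plan is to mirror the double-potential scheme behind Theorem \ref{Thm1.1} and exploit the simplification that results from $K\equiv K_0$ to sharpen the concentration set from a generic neighbourhood $U(\delta)\subset\Lambda$ to the $\delta$-neighbourhood $\mathcal{A}^\delta$ of the critical set of $V$. Concretely, I would first introduce a del Pino–Felmer style penalised nonlinearity $\tilde f_\varepsilon(x,u)$ which coincides with $K_0 f(u)$ on $\Lambda$ and is cut off outside $\Lambda$ so that the penalisation absorbs the linear term, ensuring that the modified energy functional $J_\varepsilon$ on $H^1(\mathbb R^3)$ satisfies the Palais–Smale condition and inherits the $\mathbb Z_2$ symmetry from $(f_4)$.

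Next, using the method of invariant sets of descending flow together with the symmetric mountain pass theorem exactly as in the proof of Theorem \ref{Thm1.1}, I would, for each fixed $k$, produce $k$ pairs of sign-changing critical points $\pm v_{j,\varepsilon}$ of $J_\varepsilon$ at minimax levels $c_{j,\varepsilon}\to 0$ as $\varepsilon\to 0$. The standard Moser iteration combined with a barrier/comparison argument then gives a uniform $L^\infty$ bound on $v_{j,\varepsilon}$ and an exponential decay estimate $|v_{j,\varepsilon}(x)|\le C\exp(-c\,\mathrm{dist}(x,\mathcal{P}_\varepsilon)/\varepsilon)$ away from the set $\mathcal{P}_\varepsilon$ of peak points. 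To locate these peaks, rescale $w_{j,\varepsilon}(y)=v_{j,\varepsilon}(\varepsilon y+x_\varepsilon)$ around a peak $x_\varepsilon\in\Lambda$; by $(V)$ one extracts $x_\varepsilon\to x_0\in\overline{\Lambda}$ and $w_{j,\varepsilon}\to w_0$ strongly in $H^1$, where $w_0$ solves the constant-coefficient limit Schrödinger–Poisson problem with potential $V(x_0)$ and nonlinearity $K_0 f$.

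The crucial step is to show $x_0\in\mathcal{A}$, i.e.\ $\nabla V(x_0)=0$. For this I would apply the local Pohozaev identity (Lemma \ref{Lem4.7}) to the penalised equation on a small ball $B_\rho(x_\varepsilon)$ and pass to the limit $\varepsilon\to 0$ after rescaling; the boundary contributions decay exponentially thanks to Step~3, while the interior contribution reduces in the limit to an identity of the form
\begin{equation*}
\nabla V(x_0)\int_{\mathbb R^3} w_0^2\,dy \;-\;\nabla K(x_0)\cdot(\text{an integral of }F(w_0))\;=\;0.
\end{equation*}
Because $K\equiv K_0$ annihilates the second term identically, one is left with $\nabla V(x_0)=0$ (as $w_0\not\equiv 0$), which together with $x_0\in\overline{\Lambda}$ and the boundary condition \eqref{eq1.11} forces $x_0\in\mathcal{A}$. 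Condition \eqref{eq1.11}, via the outward pointing gradient of $V$ on $\partial\Lambda$, also rules out peaks on the boundary, so each peak lies strictly inside $\Lambda$ and in fact in $\mathcal{A}^\delta$ for small $\varepsilon$; the global decay estimate then upgrades to $|v_{j,\varepsilon}(x)|\le C\exp(-c\,\mathrm{dist}(x,\mathcal{A}^\delta)/\varepsilon)$.

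Finally, the $L^\infty$ bound together with the exponential decay places $|v_{j,\varepsilon}|$ below the penalisation threshold outside $\Lambda$ once $\varepsilon$ is small, so $v_{j,\varepsilon}$ is a genuine solution of the original system \eqref{eq1.17}. The main obstacle I foresee is the treatment of the nonlocal term $\phi u$ in the local Pohozaev identity: unlike in \cite{Chen-Wang2017CVPDE}, the Poisson term spreads mass over all of $\mathbb R^3$, so its contribution to the boundary flux on $\partial B_\rho(x_\varepsilon)$ has to be estimated carefully using the rescaled decay of $w_{j,\varepsilon}$ and the explicit $1/|x-y|$ kernel of $\phi$ after the $\varepsilon^2$ scaling, in order to guarantee that only the $\nabla V(x_0)$ term survives in the limit.
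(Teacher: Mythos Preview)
Your localization argument---passing to the limit in a local Pohozaev identity and observing that $K\equiv K_0$ kills the $\nabla K$ term, leaving $\nabla V(x_0)\int w_0^2=0$ and hence $x_0\in\mathcal A$---is exactly the simplification the paper exploits (Lemma~\ref{Lem4.10} versus Lemma~\ref{Lem4.7}); the treatment of the nonlocal boundary terms you flag as the main obstacle is handled in the paper by the pointwise bound \eqref{eq4.31} on $\phi$ and the $L^2$ estimate \eqref{eq4.32} on $\nabla\phi$, together with the exponential decay from Lemma~\ref{Lem4.6}.

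Where you diverge from the paper is in the penalisation. You propose to truncate the nonlinearity outside $\Lambda$ in the classical del Pino--Felmer fashion, whereas the paper keeps $K_0 f(u)$ intact everywhere and instead adds an exterior penalty $Q_\varepsilon(u)=\big(\int\chi_\varepsilon u^2-1\big)_+^\beta$ to the functional (Section~2.2), following Chen--Wang. This is not merely cosmetic: the invariant-cone step (Lemma~\ref{Lem3.3}) that produces sign-changing critical points relies on the global odd superlinear structure of $f$ to show $A_\varepsilon(\partial P_\sigma^\pm)\subset P_\sigma^\pm$. If you replace $f$ outside $\Lambda$ by a sublinear cut-off, the estimate $\mathrm{dist}(A_\varepsilon(u),P^-)\le C(\delta\,\mathrm{dist}(u,P^-)+C(\delta)\,\mathrm{dist}(u,P^-)^{p-1})$ may fail on the modified region, and you would need a separate argument to recover the cone invariance. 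The additive penalty $Q_\varepsilon$ avoids this entirely because it is even in $u$ and only adds a nonnegative term that respects the sign decomposition. So your route can likely be made to work, but it introduces a complication the paper's scheme sidesteps; you should either switch to the $Q_\varepsilon$ penalty or explain how cone invariance survives your truncation. (Also, after the rescaling $x\mapsto\varepsilon x$ the minimax levels $c_j^\varepsilon$ are bounded above by constants $\tilde c_j$ independent of $\varepsilon$, not tending to $0$; see \eqref{eq3.13}.)
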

We also have a dual result of Theorem \ref{Thm1.3}.

\begin{Thm}\label{Thm1.4}
Assume that $(f_1)$-$(f_4), (K)$ and \eqref{eq1.14} in $(VK2)$ hold. Then for any positive integer $k$, there exists $\varepsilon_k > 0$ such that if $0 <\varepsilon <\varepsilon_k$, system \eqref{eq1.18} has at least $k$ pairs of sign-changing solutions $\pm v_{j,\varepsilon}, j = 1, 2, \cdots, k$. Moreover, for any $\delta> 0$, there exist $c = c(\delta, k) > 0$ and $C = C(\delta, k) > 0$ such that
\begin{equation*}
|v_{j,\varepsilon}(x)|\leq C\text{exp}\bigg(-\frac{c\text{dist}(x,\mathcal{B}^\delta)}{\varepsilon}\bigg)~\text{for}~x\in \mathbb{R}^3,~j=1,\cdots,k.
\end{equation*}
\end{Thm}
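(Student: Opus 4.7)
The plan is to mirror the proof of Theorem \ref{Thm1.3} but dualize the roles of the two potentials: with $V\equiv V_0$ constant, the geometric input is now only \eqref{eq1.14}, which says $\vec{n}(x)\cdot\nabla K(x)<0$ on $\partial\Lambda$, and the concentration set becomes $\mathcal{B}^\delta$ instead of $\mathcal{A}^\delta$. First I would introduce the del Pino–Felmer penalization of $Kf$ outside a neighbourhood of $\Lambda$: choose $\ell>0$ with $\ell V_0>\mu\ell/(\mu-2)$, define $\tilde f(t)=f(t)$ for $|t|\le t_0$ and a linear truncation for $|t|\ge t_0$, then set the modified nonlinearity to be $K(x)f(u)$ on $\Lambda$ and $K(x)\tilde f(u)$ on $\mathbb{R}^3\setminus\Lambda$. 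Substituting $\phi_u=(-\varepsilon^2\Delta)^{-1}u^2$ yields an even, $C^1$, Palais–Smale modified functional $\tilde J_\varepsilon$ on $H^1(\mathbb{R}^3)$.

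Second, I would apply the invariant-sets-of-descending-flow version of the symmetric mountain pass theorem to $\tilde J_\varepsilon$, exactly as in the proof of Theorem \ref{Thm1.1}, to produce, for each fixed $k$, $k$ pairs of sign-changing critical points $\pm v_{j,\varepsilon}$ of $\tilde J_\varepsilon$ at minimax levels $c_{j,\varepsilon}$ of higher topological type. By a standard rescaling-and-comparison argument using $(f_1)$–$(f_4)$ and $(K)$, I obtain the upper bound $c_{j,\varepsilon}\le C_k\varepsilon^3$, which in turn yields a uniform bound on $\varepsilon^{-3}\|v_{j,\varepsilon}\|_{H^1_\varepsilon}^2$. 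Moser iteration then gives a uniform $L^\infty$ bound $\|v_{j,\varepsilon}\|_\infty\le M_k$, and a comparison with a suitable exponential barrier (using that $V_0>0$) gives exponential decay away from the set where $|v_{j,\varepsilon}|$ is not small.

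The main obstacle is showing that the peaks of $v_{j,\varepsilon}$ concentrate inside $\mathcal{B}^\delta$. I would adapt the local Pohozaev identity (the analogue of Lemma \ref{Lem4.7}) on sub-domains of $\Lambda$: multiplying the first equation of \eqref{eq1.18} by $x_i\partial_{x_i}v_{j,\varepsilon}$ and integrating over a ball $B_r(x_0)\subset\Lambda$, the $V$-contribution drops out because $V_0$ is constant, and the dominant interior term becomes
\begin{equation*}
\int_{B_r(x_0)}\partial_{x_i}K(x)\,F(v_{j,\varepsilon})\,dx,
\end{equation*}
plus boundary fluxes and a controlled nonlocal contribution from $\phi_{v_{j,\varepsilon}}$. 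If, after rescaling $w_\varepsilon(y)=v_{j,\varepsilon}(\varepsilon y+x_\varepsilon)$, a subsequence of peak points $x_\varepsilon$ converges to $\bar x\in\overline{\Lambda}\setminus\mathcal{B}^\delta$, the non-vanishing profile $w$ gives a positive limit of the above integral; combined with \eqref{eq1.14} this forces $\bar x\in\partial\Lambda$, but then the boundary Pohozaev flux together with the sign in \eqref{eq1.14} yields a contradiction. Hence every peak lies in $\mathcal{B}^\delta$ once $\varepsilon$ is small.

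Finally, combining the peak-localization with the exponential decay barrier gives
\begin{equation*}
|v_{j,\varepsilon}(x)|\le C\exp\!\bigl(-c\,\mathrm{dist}(x,\mathcal{B}^\delta)/\varepsilon\bigr),
\end{equation*}
which in particular implies $|v_{j,\varepsilon}|\le t_0$ on $\mathbb{R}^3\setminus\Lambda$ for $\varepsilon$ small, so the penalization is inactive and $v_{j,\varepsilon}$ solves the original system \eqref{eq1.18}. The main technical difficulty throughout is the nonlocal term $\phi u$, which enters both the Palais–Smale analysis and the local Pohozaev identity; as in Lemma \ref{Lem4.7}, this is handled by localized cut-off estimates exploiting that $\phi_{v_{j,\varepsilon}}$ is controlled by $\|v_{j,\varepsilon}\|_{12/5}^2$ and by the exponential decay established in the previous step.
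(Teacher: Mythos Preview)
Your overall architecture---penalize, obtain $k$ pairs of sign-changing critical points via the invariant-sets-of-descending-flow minimax, prove uniform $L^\infty$ bounds and exponential decay, localize the peaks by a local Pohozaev identity, and finally remove the penalization---matches exactly what the paper does (the paper simply says Theorem~\ref{Thm1.4} follows by dualizing the proof of Theorem~\ref{Thm1.3}). However, your penalization is not the paper's. You propose the classical del Pino--Felmer truncation of the nonlinearity (replacing $f$ by a sublinear $\tilde f$ outside $\Lambda$), whereas the paper keeps $Kf$ untouched and instead adds the external penalty
\[
Q_\varepsilon(u)=\Bigl(\int_{\mathbb{R}^3}\chi_\varepsilon u^2\,dx-1\Bigr)_+^{\beta},
\]
working throughout in the rescaled variable $x\mapsto \varepsilon x$. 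The $Q_\varepsilon$ penalty is what makes the auxiliary operator $A_\varepsilon$ in \eqref{eq3.1} fit cleanly into the cone-invariance lemmas (Lemmas~\ref{Lem3.3}--\ref{Lem3.5}), delivers the $L^2$ smallness $\int_{\mathbb{R}^3\setminus(\Lambda_\varepsilon)^\delta}u^2\le C\varepsilon^6$ used in Lemma~\ref{Lem4.2}(2), and enters the local Pohozaev identity \eqref{eq4.27} as the term $\xi_n\nabla\chi_{\varepsilon_n}\cdot\vec t_n$, whose sign is fixed by the geometric hypothesis on $\partial\Lambda$. Your del Pino--Felmer route can in principle be pushed through, but it buys you less here: you would have to re-verify cone invariance and the $(A_1)$ estimate for the modified nonlinearity, and your Pohozaev balls $B_r(x_0)$ cannot be assumed to sit inside $\Lambda$ when the putative peak drifts toward $\partial\Lambda$, so the truncated $\tilde f$ would enter the identity after all.

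Two specific points to clean up. First, your truncation condition ``$\ell V_0>\mu\ell/(\mu-2)$'' is vacuous as written; the correct del Pino--Felmer choice fixes $t_0$ so that $f(t_0)/t_0$ is a small fraction of $V_0/b_2$, not an inequality in $\ell$. Second, your Pohozaev sketch does not actually use \eqref{eq1.14} correctly: the role of \eqref{eq1.14} is \emph{not} to force $\bar x\in\partial\Lambda$, but to guarantee that when you test with the direction $\vec t_n=-\nabla K(\varepsilon_n y^{i_0}_{j,\varepsilon_n})$ (the dual of the paper's $\vec t_n=\nabla V$), the penalization contribution has the favorable sign near $\partial\Lambda$; the contradiction then comes from $\nabla K(\bar x)\neq 0$ whenever $\bar x\notin\mathcal{B}$, exactly as in Lemma~\ref{Lem4.10} with $V$ and $K$ interchanged and the sign of $\vec t_n$ flipped.
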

\par
\vspace{3mm}
In the sequel, we only give the detailed proof for Theorem \ref{Thm1.1} and Theorem \ref{Thm1.3} because the argument for Theorem \ref{Thm1.2} and Theorem \ref{Thm1.4} is similar to that for Theorem \ref{Thm1.1} and Theorem \ref{Thm1.3}, respectively.
\par
An outline of this paper is as follows. In section 2, we first present the variational
setting of the problem, and we modify the original problem. Then we prove the $(PS)$ condition for the modified functional. In section 3, we prove the existence of multiple sign-changing solutions of modified problem by adapting an abstract critical point theorem in \cite{Liu-Liu-Wang2016JDE}. In section 4, we give the proof of Theorem \ref{Thm1.1} and Theorem \ref{Thm1.3}. In the last section, we make a remark about a related problem.
\par
\vspace{3mm}
{\bf Notation.~}Throughout this paper, we make use of the following notations.
\begin{itemize}
\item[$\bullet$]  For any $R>0$ and for any $x\in \mathbb{R}^3$, $B_R(x)$ denotes the ball of radius $R$ centered at $x$.
\item[$\bullet$]  $L^q(\mathbb{R}^3),1\leq q<\infty$ denotes the usual Lebesgue space with norm
    \begin{equation*}
    \|u\|_p=\big(\int_{\mathbb{R}^3}|u|^pdx\big)^{\frac{1}{p}}.
    \end{equation*}
\item[$\bullet$]  $H^1(\mathbb{R}^3)$ and $D^{1,2}(\mathbb{R}^3)=\{u\in L^6(\mathbb{R}^3)||\nabla u|\in L^2(\mathbb{R}^3)\}$ denote the usual Sobolev space, endowed with norm
\begin{equation*}
\|u\|=\big(\int_{\mathbb{R}^3}[|\nabla u|^2+u^2]dx\big)^{\frac{1}{2}}~~~resp.~~~\|u\|_D=\big(\int_{\mathbb{R}^3}|\nabla u|^2dx\big)^{\frac{1}{2}}.
\end{equation*}
\item[$\bullet$]  $o_n(1)$ denotes $o_n(1)\rightarrow 0$ as $n\rightarrow\infty$.
\item[$\bullet$]  $C$ or $C_i(i=1,2,\cdots)$ are some positive constants may change from line to line.
\end{itemize}

\section{Variational setting and preliminary results}
\subsection{Variational setting}
Recall that by the Lax-Milgram theorem, we know that for every $u\in H^1(\mathbb{R}^3)$, there exists a unique $\phi_u\in D^{1,2}(\mathbb{R}^3)$ such that
\begin{equation*}
-\Delta\phi_u=u^2
\end{equation*}
 and $\phi_u$ can be expressed by
\begin{equation*}
\phi_u(x)=\frac{1}{4\pi}\frac{1}{|x|}*u^2=\frac{1}{4\pi}\int_{\mathbb{R}^3}\frac{u^2(y)}{|x-y|}dy,~\forall\,x\in \mathbb{R}^3,
\end{equation*}
which is called Riesz potential (see \cite{Lieb-Loss2001book}), and $*$ denotes the convolution operator. It is clear that $\phi_u(x)\geq 0$ for all $x\in \mathbb{R}^3$. We will omit the constant $\pi$ in the sequel.
\par
Making the change of variable $x\mapsto \varepsilon x$, we can rewrite the system \eqref{eq1.8} as the following equivalent form
\begin{equation}\label{eq2.1}
\begin{cases}
-\Delta u+V(\varepsilon x)u+\phi u=K(\varepsilon x)f(u),~&\text{in}~\mathbb{R}^3,\\
-\Delta \phi =u^2,~&\text{in}~\mathbb{R}^3.
\end{cases}
\end{equation}
If $u$ is a solution of the system \eqref{eq2.1}, then $v(x):=u(\frac{x}{\varepsilon})$ is a solution of the system \eqref{eq1.8}. Thus, to study the system \eqref{eq1.8}, it suffices to study the system \eqref{eq2.1}. Then the system \eqref{eq2.1} can be reduced to the Schr\"{o}dinger equation with nonlocal term:
\begin{equation}\label{eq2.2}
-\Delta u+V(\varepsilon x)u+\phi_uu=K(\varepsilon x)f(u),~~\text{in}\ \mathbb{R}^3.
\end{equation}
Moreover, it can be proved that $u\in H^1(\mathbb{R}^3)$ is a solution of system \eqref{eq2.1} if and only if $u\in H^1(\mathbb{R}^3)$ is a critical point of the functional $I_\varepsilon:H^1(\mathbb{R}^3)\rightarrow \mathbb{R}$ defined as
\begin{equation*}
I_\varepsilon(u)=\frac{1}{2}\int_{\mathbb{R}^3}|\nabla u|^2dx+\frac{1}{2}\int_{\mathbb{R}^3}V(\varepsilon x)u^2dx+\frac{1}{4}\int_{\mathbb{R}^3}\phi_u u^2dx-\int_{\mathbb{R}^3}K(\varepsilon x)F(u)dx,
\end{equation*}
where $\phi_u$ is the unique solution of the second equation in \eqref{eq2.1}.
\par
Because we are concerned with the non-local problem in view of the presence of term $\phi_u$, we would like to recall the well-known Hardy-Littlewood-Sobolev inequality.
\begin{Lem}[Hardy-Littlewood-Sobolev inequality, \cite{Lieb-Loss2001book}]\label{Lem2.1}
Let $t,r>1$ and $0<\mu<3$ with
\begin{equation*}
\frac{1}{t}+\frac{\mu}{3}+\frac{1}{r}=2,
\end{equation*}
$g\in L^t(\R^3)$ and $h\in L^r(\R^3)$. There exists a sharp constant $C(t,\mu,r)$, independent of $f,h$ such that
\begin{equation*}
\int_{\mathbb{R}^3}\int_{\mathbb{R}^3}\frac{g(x)h(y)}{|x-y|^\mu}dydx\leq C(t,\mu,r)\|g\|_t\|h\|_r.
\end{equation*}
\end{Lem}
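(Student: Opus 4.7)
The plan is to reduce the inequality to a convolution bound and then invoke real interpolation. Write the double integral as $\int_{\R^3}(k\ast g)(x)\, h(x)\,dx$ with kernel $k(x)=|x|^{-\mu}$, so by $L^r$--$L^{r'}$ duality ($1/r+1/r'=1$) the claim is equivalent to the boundedness of convolution by $k$ from $L^t(\R^3)$ into $L^{r'}(\R^3)$, where the hypothesis $1/t+\mu/3+1/r=2$ rewrites as $1+1/r'=1/t+\mu/3$.

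The crucial observation is that $k\notin L^{3/\mu}(\R^3)$, so ordinary Young's inequality for convolutions fails at the scaling we need; however, $k$ does belong to the weak-type space $L^{3/\mu,\infty}(\R^3)$, since $|\{x\in\R^3:|x|^{-\mu}>\lambda\}|=c\,\lambda^{-3/\mu}$. To exploit this I would split $k=k\chi_{B_R}+k\chi_{B_R^c}$: the first summand lies in $L^a$ for every $a<3/\mu$, the second in $L^b$ for every $b>3/\mu$. Applying Young's inequality to each piece yields two pointwise bounds on $|(k\ast g)(x)|$ in terms of $R$ and $\|g\|_t$; optimizing the choice of $R$ as a function of $\lambda$ produces the weak-type estimate $|\{|k\ast g|>\lambda\}|\le C\lambda^{-r'}\|g\|_t^{r'}$.

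Next I would repeat the argument at two pairs of exponents $(t_i,r_i')$ lying on the correct scaling line and straddling the target $(t,r')$. This gives weak-type bounds at two interior points, and the Marcinkiewicz interpolation theorem then upgrades them to the desired strong-type estimate $\|k\ast g\|_{r'}\le C\|g\|_t$, valid throughout the admissible range $1<t<3/(3-\mu)$. Pairing with $h\in L^r$ via Hölder's inequality recovers the original double-integral form.

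The main obstacle is precisely the endpoint behavior of the Riesz kernel: because $|x|^{-\mu}$ sits exactly on the critical Lebesgue threshold, one must pass through weak-type spaces and real interpolation instead of applying Young's inequality directly. If one further demands that the constant $C(t,\mu,r)$ be sharp, as the statement notes, a different route via Riesz's rearrangement inequality and conformal inversion symmetry (the approach of Lieb, reproduced in Lieb--Loss) is required; this reduces the problem to radially symmetric decreasing functions and identifies the extremizers, but is considerably more delicate and unnecessary for the applications in this paper, where only the existence of some finite constant is used.
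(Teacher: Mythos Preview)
Your proof sketch is correct and follows the standard real-interpolation route to the Hardy--Littlewood--Sobolev inequality: reduce to a convolution estimate by duality, observe that the Riesz kernel $|x|^{-\mu}$ lies in weak-$L^{3/\mu}$, obtain weak-type bounds by splitting the kernel and applying Young's inequality on each piece, and then upgrade to strong type via Marcinkiewicz interpolation between two exponents on the scaling line. Your closing remark about the sharp constant requiring Lieb's rearrangement/conformal-symmetry argument is also accurate.

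Note, however, that the paper does not supply a proof of this lemma at all: it is stated with a citation to Lieb--Loss and used as a black box throughout (e.g.\ in \eqref{eq2.3}, \eqref{eq2.15}, \eqref{eq3.3}, \eqref{eq3.9}). So there is no ``paper's own proof'' to compare against; your proposal simply fills in what the paper takes as known from the literature.
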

Using Hardy-Littlewood-Sobolev inequality, it is easy to check that
\begin{equation}\label{eq2.3}
\|u\|_D\leq C\|u\|^2~~\text{and}~~\int_{\mathbb{R}^3}\phi_u u^2dx\leq C\|u\|_{\frac{12}{5}}^4\leq C\|u\|^4.
\end{equation}
Therefore, the functional $I_\varepsilon$ is well-defined for every $u\in H^1(\mathbb{R}^3)$ and belongs to $C^1(H^1(\mathbb{R}^3),\mathbb{R})$. Moreover, for any $u, v\in H^1(\mathbb{R}^3)$, we have
\begin{equation*}
\langle I_\varepsilon^\prime(u),v\rangle=\int_{\mathbb{R}^3}\nabla u\nabla vdx+\int_{\mathbb{R}^3}V(\varepsilon x)uvdx+\int_{\mathbb{R}^3}\phi_u uvdx-\int_{\mathbb{R}^3}K(\varepsilon x)f(u)vdx,
\end{equation*}
where $\langle \cdot ,\cdot \rangle$ denotes the usual duality.

\subsection{Penalization argument}
In what follows, we will not work directly with the functional $I_\varepsilon$, because we have some difficulties to verify the $(PS)$ condition. We will adapt for our case an argument explored by the penalization method introduction by del Pino and Felmer \cite{DelPino-Felmer1996CVPDE}, and build a suitable modification of the energy functional $I_\varepsilon$ such that it satisfies the $(PS)$ condition.
\par
For any $\varepsilon>0$ and any $B\subset \mathbb{R}^3$, we define
\begin{equation*}
B_\epsilon=\{x\in \mathbb{R}^3|\varepsilon x\in B\}.
\end{equation*}
Let $\varsigma \in C^\infty(\mathbb{R})$ be a cut-off function such that $0\leq \varsigma(t) \leq1$ and $\varsigma^\prime(t)\geq0$ for every $t\in \mathbb{R},
\varsigma(t) = 0$ if $t\leq0, \varsigma(t) > 0$ if $t > 0$ and $\varsigma(t) = 1$ if $t\geq 1$. Let
\begin{equation*}
\chi_\varepsilon(x)=
\begin{cases}
0,&\text{if}~x\in \Lambda_\varepsilon,\\
\varepsilon^{-6}\varsigma(\text{dist}(x,\Lambda_\varepsilon)),&\text{if}~x\notin \Lambda_\varepsilon.
\end{cases}
\end{equation*}
It is easy to see that $\chi_\varepsilon$ is a $C^1$ function for $\varepsilon$ small and
\begin{equation*}
\chi_\varepsilon(x)=0~~\text{if}~~x\in \Lambda_\varepsilon,~~~\chi_\varepsilon(x)=\varepsilon^{-6}~~\text{if}~~x\notin (\Lambda_\varepsilon)^1.
\end{equation*}
For $u\in H^1(\mathbb{R}^3)$, we introduce the penalization term
\begin{equation*}
Q_\varepsilon(u)=\bigg(\int_{\mathbb{R}^3}\chi_\varepsilon(x)u^2dx-1\bigg)_+^\beta
\end{equation*}
where $2<\beta<\frac{\mu}{2}$ and $(t)_+=\max\{t,0\}$.
\par
Then we are ready to define the modified functional with penalization term by:
\begin{equation}\label{eq2.4}
\Phi_\varepsilon(u)=\frac{1}{2}\int_{\mathbb{R}^3}|\nabla u|^2dx+\frac{1}{2}\int_{\mathbb{R}^3}V(\varepsilon x)u^2dx+\frac{1}{4}\int_{\mathbb{R}^3}\phi_u u^2dx+Q_\varepsilon(u)-\int_{\mathbb{R}^3}K(\varepsilon x)F(u)dx.
\end{equation}
It is easy to see that the critical point of $\Phi_\varepsilon$ is a solution of
\begin{equation}\label{eq2.5}
\begin{cases}
-\Delta u+V(\varepsilon x)u+\phi u
+2\beta\big(\int_{\mathbb{R}^3}\chi_\varepsilon(x)u^2dx-1\big)_+^{\beta-1}\chi_\varepsilon(x)u=K(\varepsilon x)f(u),\\
-\Delta \phi=u^2.
\end{cases}
\end{equation}
And if $u$ is a critical point of $\Phi_\epsilon$ with $Q_\epsilon(u) = 0$, then $u$ is a solution of system \eqref{eq2.1}.

\begin{Lem}\label{Lem2.2}
$\Phi_\varepsilon$ satisfies the $(PS)_c$ condition.
\end{Lem}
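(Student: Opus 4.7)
Let $\{u_n\}\subset H^1(\mathbb{R}^3)$ be a $(PS)_c$ sequence, i.e.\ $\Phi_\varepsilon(u_n)\to c$ and $\Phi_\varepsilon'(u_n)\to 0$. The plan is the classical three steps --- boundedness, weak convergence, strong convergence --- with the penalization $Q_\varepsilon$ playing the crucial role of forbidding loss of mass at infinity. For boundedness, I form $\mu\Phi_\varepsilon(u_n)-\langle\Phi_\varepsilon'(u_n),u_n\rangle$; writing $T_n:=\int_{\mathbb{R}^3}\chi_\varepsilon u_n^2\,dx$, the penalization's contribution collapses to $(T_n-1)_+^{\beta-1}[(\mu-2\beta)T_n-\mu]$, which is continuous on $[0,\infty)$, vanishes at $T_n=1$ and at $T_n=\mu/(\mu-2\beta)$, and tends to $+\infty$ as $T_n\to\infty$, and therefore is bounded below on $[0,\infty)$ --- precisely because $2<\beta<\mu/2$. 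Combined with $V\geq a_1>0$, the nonnegativity of $\int\phi_{u_n}u_n^2$, and $(f_3)$ (which makes the nonlinear contribution nonnegative), this yields $\|u_n\|_{H^1}\leq C$ independent of $n$.

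Passing to a subsequence, $u_n\rightharpoonup u$ in $H^1$, $u_n\to u$ in $L^q_{\mathrm{loc}}$ for $q\in[1,6)$ and a.e. Standard passage to the limit, using $\phi_{u_n}\to\phi_u$ in $D^{1,2}$ via Lemma~\ref{Lem2.1}, shows $\Phi_\varepsilon'(u)=0$. Since $Q_\varepsilon(u_n)$ is bounded, $T_n\leq M$ uniformly; and because $\chi_\varepsilon\equiv\varepsilon^{-6}$ off $(\Lambda_\varepsilon)^1$, I obtain the key tail bound $\int_{\mathbb{R}^3\setminus(\Lambda_\varepsilon)^1}u_n^2\,dx\leq \varepsilon^6 M$, uniformly in $n$. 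To produce strong $L^q$-convergence of $v_n:=u_n-u$ for $q\in(2,6)$, I invoke Lions' vanishing lemma: finite-point concentration is excluded by Rellich compactness plus a.e.\ convergence, while concentration along $|y_n|\to\infty$ would force $T_n\geq \varepsilon^{-6}\delta-o(1)$ via Brezis--Lieb in the $\chi_\varepsilon$-weighted $L^2$, contradicting $T_n\leq M$ once $\varepsilon$ is sufficiently small. Hence $v_n\to 0$ in $L^q(\mathbb{R}^3)$ for every $q\in(2,6)$.

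For the final step I pair $\Phi_\varepsilon'(u_n)-\Phi_\varepsilon'(u)\to 0$ with $v_n$. The quadratic piece contributes at least $c_0\|v_n\|_{H^1}^2$. The nonlocal difference decomposes as $\int\phi_{u_n}v_n^2+\int(\phi_{u_n}-\phi_u)u\,v_n$: the first summand is $\geq 0$, the second is $o(1)$ because $\phi_{u_n}\to\phi_u$ in $L^6$ (Hardy--Littlewood--Sobolev applied to $\|u_n^2-u^2\|_{6/5}\to 0$). Inserting \eqref{eq1.9}, the nonlinear term is majorised by $C\delta(\|u_n\|_2+\|u\|_2)\|v_n\|_2+C_\delta(\|u_n\|_p+\|u\|_p)^{p-1}\|v_n\|_p$, whose limit superior is at most $C\delta$ for every $\delta>0$ thanks to the $L^q$-vanishing, and so is $o(1)$. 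The penalization difference is nonnegative up to $o(1)$ using weak lower semicontinuity $\int\chi_\varepsilon u^2\leq\liminf T_n$ together with $\int\chi_\varepsilon u\,u_n\to\int\chi_\varepsilon u^2$ (testing $u_n\rightharpoonup u$ against the fixed $L^2$ function $\chi_\varepsilon u$). Collecting all terms gives $c_0\|v_n\|_{H^1}^2\leq o(1)+o(1)\|v_n\|_{H^1}$, so $v_n\to 0$ in $H^1$. The hardest step will be the vanishing argument above: the large weight $\varepsilon^{-6}$ in $\chi_\varepsilon$, combined with the exponent condition $\beta<\mu/2$, is engineered precisely to rule out mass escape at infinity and thereby restore the missing compactness.
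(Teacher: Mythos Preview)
Your boundedness argument and the passage to a weak limit are fine and essentially match the paper. The gap is in your Lions vanishing step. You argue that concentration of $v_n=u_n-u$ along $|y_n|\to\infty$ forces $T_n\geq\varepsilon^{-6}\delta-o(1)$ and then say this contradicts $T_n\leq M$ ``once $\varepsilon$ is sufficiently small''. But $\varepsilon$ is \emph{fixed} throughout the lemma, and the concentration threshold $\delta$ is produced \emph{a posteriori} by Lions' lemma from the particular sequence $\{v_n\}$, which itself depends on $\varepsilon$; nothing prevents $\delta$ from being of order $M\varepsilon^{6}$. Concretely, a bump of amplitude $\varepsilon^{3}$ escaping to infinity carries $L^2$ mass $\sim\varepsilon^{6}$ --- perfectly compatible with $T_n\leq M$ --- yet has $L^p$ mass $\sim\varepsilon^{3p}$ that does not vanish as $n\to\infty$. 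So your conclusion $v_n\to 0$ in $L^q(\mathbb{R}^3)$ for $q\in(2,6)$ is unproven, and without it both your nonlinear estimate (which needs $\|v_n\|_p\to 0$) and your nonlocal estimate (which needs $\|u_n^2-u^2\|_{6/5}\to 0$, hence $\|v_n\|_{12/5}\to 0$) collapse.

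The paper never claims global $L^q$ vanishing. Instead it pairs $\Phi_\varepsilon'(u_n)-\Phi_\varepsilon'(u_m)$ with $u_n-u_m$ and splits every integral at the boundary of a large ball containing $(\Lambda_\varepsilon)^1$: inside, Rellich gives $o(1)$; outside, the tail bound $\int u_n^2\leq C\varepsilon^6$ interpolated against the $H^1$ bound yields $\|u_n\|_{L^p(\text{outside})}^p\leq C\varepsilon^{3(6-p)/2}$. The crucial extra ingredient is the mean-value form $|f(u_n)-f(u_m)|\leq (\delta+C_\delta|u_n|^{p-2}+C_\delta|u_m|^{p-2})|u_n-u_m|$ rather than the triangle inequality you use via \eqref{eq1.9}: the extra factor $|u_n-u_m|$ makes the outside contribution $C\varepsilon^{\text{power}}\|u_n-u_m\|^2$ (a \emph{multiple} of the squared norm, not a constant), which is then absorbed into the coercive quadratic part for $\varepsilon$ small. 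The nonlocal piece is handled by the same splitting. Your scheme can be repaired along these lines --- pairing against $u_n-u$ works just as well as the Cauchy pairing --- but the Lions shortcut does not go through.
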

\begin{proof}
Let $\{u_n\}$ be a $(PS)_c$ sequence of the functional $\Phi_\varepsilon$, that is
\begin{equation*}
\Phi_\varepsilon(u_n)\rightarrow c~~\text{and}~~\Phi_\varepsilon^\prime(u_n)\rightarrow 0~~\text{as}~n\rightarrow\infty.
\end{equation*}
Therefore, it follows from $(f_3)$ and the fact $\phi_{u_n}(x)\geq 0$ for all $x\in \mathbb{R}^3$ that
\begin{equation*}
\begin{split}
c+1+\|u_n\|&\geq\Phi_\varepsilon(u_n)-\frac{1}{\mu}\langle \Phi_\varepsilon^\prime(u_n),u_n\rangle\\
&=\frac{\mu-2}{2\mu}\bigg(\int_{\mathbb{R}^3}|\nabla u_n|^2dx+\int_{\mathbb{R}^3}V(\varepsilon x)u_n^2dx\bigg)+\frac{\mu-4}{4\mu}\int_{\mathbb{R}^3}\phi_{u_n}u_n^2dx+\big(\int_{\mathbb{R}^3}\chi_\varepsilon(x)u_n^2dx-1\big)_+^\beta\\
&\quad-\frac{2\beta}{\mu}\bigg(\int_{\mathbb{R}^3}\chi_\varepsilon(x)u_n^2dx-1\bigg)_+^{\beta-1}
\int_{\mathbb{R}^3}\chi_\varepsilon(x)u_n^2dx+\int_{\mathbb{R}^3}K(\varepsilon x)[\frac{1}{\mu}f(u_n)u_n-F(u_n)]dx\\
&\geq \frac{\min\{1,a_1\}(\mu-2)}{2\mu}\|u_n\|^2+\big(\int_{\mathbb{R}^3}\chi_\varepsilon(x)u_n^2dx-1\big)_+^\beta\\
&\quad-\frac{2\beta}{\mu}\bigg(\int_{\mathbb{R}^3}\chi_\varepsilon(x)u_n^2dx-1\bigg)_+^{\beta-1}
\int_{\mathbb{R}^3}\chi_\varepsilon(x)u_n^2dx\\
&\geq \frac{\min\{1,a_1\}(\mu-2)}{2\mu}\|u_n\|^2+\frac{\mu-2\beta}{\mu}\big(\int_{\mathbb{R}^3}\chi_\varepsilon(x)u_n^2dx-1\big)_+^\beta
-\frac{2\beta}{\mu}\bigg(\int_{\mathbb{R}^3}\chi_\varepsilon(x)u_n^2dx-1\bigg)_+^{\beta-1}
\end{split}
\end{equation*}
here we have used the fact that
\begin{equation*}
\int_{\mathbb{R}^3}\chi_\varepsilon(x)u_n^2dx\leq\big(\int_{\mathbb{R}^3}\chi_\varepsilon(x)u_n^2dx-1\big)_++1.
\end{equation*}
Since $2<\beta<\frac{\mu}{2}$, we get $\{u_n\}$ is bounded in $H^1(\mathbb{R}^3)$ and $\big(\int_{\mathbb{R}^3}\chi_\varepsilon(x)u_n^2dx-1\big)_+^\beta$ is bounded.
We assume that, up to a subsequence, $u_n\rightharpoonup u$ in $H^1(\mathbb{R}^3)$ as $n\to \infty$ and
\begin{equation}\label{eq2.6}
\lambda_n:=2 \beta\big(\int_{\mathbb{R}^3}\chi_\varepsilon(x)u_n^2dx-1\big)_+^{\beta-1}\rightarrow \lambda,~\text{as}~n\rightarrow \infty.
\end{equation}
It is easy to verify that $u$ solves
\begin{equation*}
-\Delta u+V(\varepsilon x)u+\phi_u u+\lambda \chi_\varepsilon(x)u=K(\varepsilon x)f(u).
\end{equation*}
Therefore, for any $v\in H^1(\mathbb{R}^3)$
\begin{equation}\label{eq2.7}
\begin{split}
o_n(\|v\|)=\langle \Phi_\varepsilon^\prime(u_n),v\rangle&=\int_{\mathbb{R}^3}\nabla (u_n-u)\nabla v dx+\int_{\mathbb{R}^3}V(\varepsilon x)(u_n-u)vdx+
\int_{\mathbb{R}^3}(\phi_{u_n}u_n-\phi_uu)vdx\\
&\quad+\lambda\int_{\mathbb{R}^3}\chi_\varepsilon(x)(u_n-u)vdx+(\lambda_n-\lambda)\int_{\mathbb{R}^3}\chi_\varepsilon(x)u_nvdx\\
&\quad-\int_{\mathbb{R}^3}K(\varepsilon x)(f(u_n)-f(u))vdx.
\end{split}
\end{equation}
Then we have as $n, m\rightarrow\infty$,
\begin{equation}\label{eq2.8}
\begin{split}
o_{m,n}(\|u_n-u_m\|)&=\langle \Phi_\varepsilon^\prime(u_n)-\Phi_\varepsilon^\prime(u_m),u_n-u_m\rangle\\
&=\int_{\mathbb{R}^3}|\nabla(u_n-u_m)|^2dx+\int_{\mathbb{R}^3}V(\varepsilon x)|u_n-u_m|^2dx+\int_{\mathbb{R}^3}(\phi_{u_n}u_n-\phi_{u_m}u_m)(u_n-u_m)dx
\\
&\quad+\int_{\mathbb{R}^3}(\lambda_n u_n-\lambda_mu_m)(u_n-u_m)\chi_\varepsilon(x)dx+\int_{\mathbb{R}^3}K(\varepsilon x)(f(u_m)-f(u_n))(u_n-u_m)dx\\
&\geq \min\{1,a_1\}\|u_n-u_m\|^2+\int_{\mathbb{R}^3}(\lambda_n u_n-\lambda_mu_m)(u_n-u_m)\chi_\varepsilon(x)dx\\
&\quad+\int_{\mathbb{R}^3}(\phi_{u_n}-\phi_{u_m})u_m(u_n-u_m)dx+\int_{\mathbb{R}^3}K(\varepsilon x)(f(u_m)-f(u_n))(u_n-u_m)dx
\end{split}
\end{equation}
here we have used the fact that
\begin{equation*}
\begin{split}
\int_{\mathbb{R}^3}(\phi_{u_n}u_n-\phi_{u_m}u_m)(u_n-u_m)dx&
=\int_{\mathbb{R}^3}\phi_{u_n}(u_n-u_m)^2dx+\int_{\mathbb{R}^3}(\phi_{u_n}-\phi_{u_m})u_m(u_n-u_m)dx\\
&\geq \int_{\mathbb{R}^3}(\phi_{u_n}-\phi_{u_m})u_m(u_n-u_m)dx.
\end{split}
\end{equation*}
From \eqref{eq2.6}, we get that as $n,m\rightarrow\infty$
\begin{equation}\label{eq2.9}
\int_{\mathbb{R}^3}(\lambda_n u_n-\lambda_mu_m)(u_n-u_m)\chi_\varepsilon(x)dx=\lambda\int_{\mathbb{R}^3}(u_n-u_m)^2\chi_\varepsilon(x)dx+o(1).
\end{equation}
\par
Let $r_0 > 0$ be such that $\Lambda\subset B_{r_0}(0)$. Then $\Lambda_\varepsilon\subset B_{\varepsilon^{-1}r_0+1}(0)$ and by the boundedness of $\big(\int_{\mathbb{R}^3}\chi_\varepsilon(x)u_n^2dx-1\big)_+^\beta$, one has
\begin{equation*}
\begin{split}
\int_{\{|x|\geq \varepsilon^{-1}r_0+1\}}u_n^2dx&\leq\int_{\mathbb{R}^3\setminus (\Lambda_\varepsilon)^1}u_n^2dx=\varepsilon^6\int_{\mathbb{R}^3\setminus (\Lambda_\varepsilon)^1}\chi_\varepsilon(x)u_n^2dx\leq \varepsilon^6\int_{\mathbb{R}^3}\chi_\varepsilon(x)u_n^2dx\leq C\varepsilon^6.
\end{split}
\end{equation*}
Using the interpolation inequality $\|u\|_p\leq \|u\|_2^t\|u\|_6^{1-t}\leq C\|u\|_2^t\|u\|^{1-t}$, where the positive constant $C$ is independent of $n ,\varepsilon$ and $t=\frac{6-3p}{2p}$. Thus,
\begin{equation}\label{eq2.10}
\int_{\{|x|\geq \varepsilon^{-1}r_0+1\}}|u_n|^pdx\leq C\varepsilon^{\frac{3(6-p)}{2}}.
\end{equation}
Using the mean value theorem, we get that there exists $0 <\theta(x) <1$ such that
\begin{equation}\label{eq2.11}
\begin{split}
&\big|\int_{\mathbb{R}^3}K(\varepsilon x)(f(u_m)-f(u_n))(u_n-u_m)dx\big|\\
&=\big|\int_{\mathbb{R}^3}K(\varepsilon x)f^\prime(\theta u_n+(1-\theta)u_m)(u_n-u_m)^2dx\big|\\
&\leq \delta\int_{\mathbb{R}^3}|u_n-u_m|^2dx+C(\delta)\int_{\mathbb{R}^3}(|u_n|^{p-2}+|u_m|^{p-2})|u_n-u_m|^2dx\\
&\leq \frac{1}{2}\|u_n-u_m\|^2+C\int_{\{|x|\geq \varepsilon^{-1}r_0+1\}}(|u_n|^{p-2}+|u_m|^{p-2})|u_n-u_m|^2dx\\
&\quad+C\int_{\{|x|\leq \varepsilon^{-1}r_0+1\}}(|u_n|^{p-2}+|u_m|^{p-2})|u_n-u_m|^2dx.
\end{split}
\end{equation}
Since $u_n\rightharpoonup u$ in $H^1(\mathbb{R}^3)$, we get $u_n\rightarrow u$ in $L^p(\{x\in \mathbb{R}^3||x|\leq \epsilon^{-1}r_0+1\})$. It follows that as $n,m\rightarrow\infty$
\begin{equation}\label{eq2.12}
\begin{split}
&\int_{\{|x|\leq \varepsilon^{-1}r_0+1\}}(|u_n|^{p-2}+|u_m|^{p-2})|u_n-u_m|^2dx\\
&\leq\bigg(\big(\int_{\{|x|\leq\varepsilon^{-1}r_0+1\}}|u_n|^pdx\big)^{\frac{p-2}{p}}+\big(\int_{\{|x|\leq\varepsilon^{-1}r_0+1\}}|u_m|^pdx\big)^{\frac{p-2}{p}}\bigg)\bigg(\int_{\{|x|\leq \varepsilon^{-1}r_0+1\}}|u_n-u_m|^pdx\bigg)^{\frac{2}{p}}\\
&=o(1).
\end{split}
\end{equation}
And from \eqref{eq2.10}, we get that
\begin{equation}\label{eq2.13}
\begin{split}
&\int_{\{|x|\geq\varepsilon^{-1}r_0+1\}}(|u_n|^{p-2}+|u_m|^{p-2})|u_n-u_m|^2dx\\
&\leq\bigg(\big(\int_{\{|x|\geq\varepsilon^{-1}r_0+1\}}|u_n|^pdx\big)^{\frac{p-2}{p}}+\big(\int_{\{|x|\geq\varepsilon^{-1}r_0+1\}}|u_m|^pdx\big)^{\frac{p-2}{p}}\bigg)\bigg(\int_{\{|x|\geq \varepsilon^{-1}r_0+1\}}|u_n-u_m|^pdx\bigg)^{\frac{2}{p}}\\
&\leq C\varepsilon^{\frac{3(6-p)(p-2)}{2p}} \|u_n-u_m\|^2.
\end{split}
\end{equation}
Combining with \eqref{eq2.11}-\eqref{eq2.13}, we get that, as $n, m\rightarrow \infty$,
\begin{equation}\label{eq2.14}
|\int_{\mathbb{R}^3}K(\varepsilon x)(f(u_m)-f(u_n))(u_n-u_m)dx|\leq (\frac{1}{2}+C\varepsilon^{\frac{3(6-p)(p-2)}{2p}})\|u_n-u_m\|^2+o(1).
\end{equation}
By Hardy-Littlewood-Sobolev inequality
\begin{equation}\label{eq2.15}
\begin{split}
&\quad\int_{\mathbb{R}^3}(\phi_{u_n}-\phi_{u_m})u_m(u_n-u_m)dx\\
&=\int_{\mathbb{R}^3}\int_{\mathbb{R}^3}\frac{(u_n^2(y)-u_m^2(y))u_m(x)(u_n(x)-u_m(x))}{|x-y|}dxdy\\
&\leq \big(\int_{\mathbb{R}^3}|u_n^2-u_m^2|^{\frac{6}{5}}dx\big)^{\frac{5}{6}}\big(\int_{\mathbb{R}^3}|u_m|^{\frac{6}{5}}|u_n-u_m|^{\frac{6}{5}}dx\big)^{\frac{5}{6}}\\
&\leq \big(\int_{\mathbb{R}^3}|u_n^2-u_m^2|^{\frac{6}{5}}dx\big)^{\frac{5}{6}}\bigg[
\big(\int_{\{|x|\geq \varepsilon^{-1}r_0+1\}}|u_m|^{\frac{6}{5}}|u_n-u_m|^{\frac{6}{5}}dx\big)^{\frac{5}{6}}
+\big(\int_{\{|x|\leq \varepsilon^{-1}r_0+1\}}|u_m|^{\frac{6}{5}}|u_n-u_m|^{\frac{6}{5}}dx\big)^{\frac{5}{6}}\bigg]\\
&\leq \big(\int_{\mathbb{R}^3}|u_n^2-u_m^2|^{\frac{6}{5}}dx\big)^{\frac{5}{6}}\big(\int_{\{|x|\geq \varepsilon^{-1}r_0+1\}}|u_m|^{\frac{12}{5}}dx\big)^{\frac{5}{12}}\big(\int_{\mathbb{R}^3}|u_n-u_m|^{\frac{12}{5}}dx\big)^{\frac{5}{12}}\\
&\quad+\big(\int_{\mathbb{R}^3}|u_n^2-u_m^2|^{\frac{6}{5}}dx\big)^{\frac{5}{6}}\big(\int_{\mathbb{R}^3}
|u_m|^{\frac{12}{5}}dx\big)^{\frac{5}{12}}\big(\int_{\{|x|\leq \varepsilon^{-1}r_0+1\}}|u_n-u_m|^{\frac{12}{5}}dx\big)^{\frac{5}{12}}\\
&\leq C\varepsilon^{\frac{9}{4}} \big(\int_{\mathbb{R}^3}|u_n-u_m|^{\frac{12}{5}}dx\big)^{\frac{5}{6}}+o(1)\\
&\leq C\varepsilon^{\frac{9}{4}} \|u_n-u_m\|^2+o(1).
\end{split}
\end{equation}
So, \eqref{eq2.8}-\eqref{eq2.9} and \eqref{eq2.14}-\eqref{eq2.15} imply that $\{u_n\}$ is a Cauchy sequence in $H^1(\mathbb{R}^3)$, hence a convergent sequence.
\end{proof}

\section{Existence of multiple sign-changing solutions for modified problem}
In this section, we construct multiple sign-changing critical points of the modified functionals $\Phi_\varepsilon$ . For this purpose, we adapt an abstract critical point theorem in \cite{Liu-Liu-Wang2016JDE}. For reader's convenience, we state it here.
\par
Let $X$ be a Banach space, $J$ be an even $C^1$ functional on $X$. Let $P,Q$ be open convex sets of $X,Q=-P$. Set
\begin{equation*}
O=P\cup Q,~~\Sigma =\partial P\cap \partial Q.
\end{equation*}
Assume
\begin{itemize}
\item[$(I_1)$] $J$ satisfies the $(PS)$ condition.
\item[$(I_2)$] $c_*= \inf\limits_{u\in \Sigma} J (u)> 0$.
\end{itemize}
Assume there exists an odd locally Lipschitz continuous map $A : X\rightarrow X$ satisfying:
\begin{itemize}
\item[$(A_1)$] Given $c_0,b_0 >0$, there exists $b=b(c_0,b_0)>0$ such that if $\|J^\prime(u)\|\geq b_0,|J(u)|\leq c_0$, then
\begin{equation*}
\langle J^\prime (u),u-Au\rangle\geq b\|u-Au\|>0.
\end{equation*}
\item[$(A_2)$] $A(\partial P)\subset P,~A(\partial Q)\subset Q$.
\end{itemize}
Define
\begin{equation*}
\Theta=\{\eta|\eta \in C(X,X),\eta~\text{odd},~\eta(P)\subset P,\eta(Q)\subset Q,\eta(u)=u~\text{if}~J(u)<0\},
\end{equation*}
\begin{equation*}
\Gamma_j=\{E|E\subset X,E~\text{compact},-E=E,\gamma(E\cap \eta^{-1}(\Sigma))\geq j~\text{for}~\eta\in \Theta\},
\end{equation*}
where $\gamma$ be the genus of symmetric sets
\begin{equation*}
\gamma(E)=\inf\{n|\text{there~exists~an~odd~map}~\varphi:E\rightarrow\mathbb{R}^n\setminus\{0\}\}.
\end{equation*}
Assume that
\begin{itemize}
\item[$(\Gamma)$] $\Gamma_j$ is nonempty, $j=1,2,\cdots$.
\end{itemize}
Define
\begin{equation*}
c_j=\inf_{A\in \Gamma_j}\sup_{u\in A\setminus O}J(u),~j=1,2,\cdots,
\end{equation*}
\begin{equation*}
K_c=\{u|J^\prime(u)=0,J(u)=c\},~K_c^*=K_c\setminus O.
\end{equation*}
The following abstract critical point theorem is from \cite{Liu-Liu-Wang2016JDE}.
\begin{Thm}\label{Thm3.1}
Assume $(I_1), (I_2), (A_1), (A_2)$ and $(\Gamma)$ hold. Then
\begin{itemize}
\item[$(1.)$] $c_j\geq c_*, K_{c_j}^*\neq \emptyset$.
\item[$(2.)$] $c_j\rightarrow\infty$ as $j\rightarrow\infty$.
\item[$(3.)$] if $c_j=c_{j+1}=\cdots=c_{j+k-1}=c$, then $\gamma(K_c^*)\geq k$.
\end{itemize}
\end{Thm}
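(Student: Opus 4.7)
The plan is to adapt the classical Ljusternik-Schnirelmann multiplicity scheme to the setting of invariant convex sets, following the Liu-Liu-Wang template. The central device is a negative pseudo-gradient-type flow built from the odd map $A$ whose time-$t$ maps lie in $\Theta$; once this flow is in place, a standard deformation-plus-genus argument yields all three conclusions.

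First I would construct the flow. On the open set $\{u : Au \neq u\}$ define $W(u) = (u - Au)/\|u - Au\|$; this is odd and locally Lipschitz, and by $(A_1)$ one has $\langle J'(u), W(u)\rangle \geq b > 0$ uniformly on any strip $\{|J| \leq c_0\} \cap \{\|J'\| \geq b_0\}$. After multiplying by an odd Lipschitz cut-off $\chi$ that vanishes near critical points and where $J \leq 0$, integrating $\dot\sigma = -\chi(\sigma) W(\sigma)$ produces a continuous odd flow that fixes $\{J < 0\}$ pointwise. The decisive invariance claim, which uses $(A_2)$ together with the convexity of $P$, is that $\overline{P}$ is positively invariant: at $u \in \partial P$ one has $Au \in P$, so $-W(u) = (Au - u)/\|Au - u\|$ points from $u$ into the convex open set $P$, and a Nagumo tangent-cone criterion rules out exit from $\overline{P}$. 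Similarly for $\overline{Q}$, so $\eta_t := \sigma(t,\cdot) \in \Theta$ for every $t \geq 0$.

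The class $\Gamma_j$ is then stable under $\Theta$: if $E \in \Gamma_j$ and $\zeta \in \Theta$, then $\eta \circ \zeta \in \Theta$ for any $\eta \in \Theta$ (composition preserves oddness, invariance of $P$ and $Q$, and the condition $J(u) < 0 \Rightarrow \eta(u) = u$), hence $\gamma(\zeta(E) \cap \eta^{-1}(\Sigma)) = \gamma(\zeta(E \cap (\eta\circ\zeta)^{-1}(\Sigma))) \geq j$. With this stability, part (1) follows: choosing $\eta = \mathrm{id}$ shows each $E \in \Gamma_j$ meets $\Sigma \subset X \setminus O$, so $c_j \geq c_* > 0$; if $K_{c_j}^* = \emptyset$ then $K_{c_j} \subset O$ and a symmetric neighborhood $N \subset O$ of $K_{c_j}$ together with the flow produces $\eta \in \Theta$ with $\eta(J^{c_j+\varepsilon} \setminus N) \subset J^{c_j-\varepsilon}$; since $\eta(O) \subset O$ and $(E \setminus O) \cap N = \emptyset$ for an almost-optimal $E$, we get $\eta(E) \setminus O \subset J^{c_j-\varepsilon}$, contradicting $\eta(E) \in \Gamma_j$. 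Part (3) is the classical Ljusternik-Schnirelmann argument: if $c = c_j = \cdots = c_{j+k-1}$ and $\gamma(K_c^*) < k$, compactness of $K_c^*$ from $(PS)$ supplies a symmetric neighborhood $N$ of $K_c^*$ with $\gamma(\overline{N}) < k$ and a flow pushing $J^{c+\varepsilon} \setminus N$ below $c - \varepsilon$; subadditivity of the genus then produces from any almost-optimal $E \in \Gamma_{j+k-1}$ a set $\overline{E \setminus N} \in \Gamma_j$ whose $\eta$-image lies below level $c$ outside $O$, contradicting the definition of $c_j$. Part (2) is a standard consequence of $(PS)$: if $\sup_j c_j < \infty$, the critical set below this level is compact and of bounded genus, contradicting $\Gamma_j \neq \emptyset$ for all $j$ together with the minimax inequalities.

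The main obstacle I anticipate is the rigorous proof of positive invariance of $\overline{P}$ and $\overline{Q}$ under the flow. The map $A$ replaces a genuine pseudo-gradient, so the classical vector-field argument does not apply directly; one needs a tangent-cone/Nagumo criterion that exploits the open convexity of $P$ together with the strict inclusion $A(\partial P) \subset P$, and must control the behavior of $W$ near points where $u - Au$ degenerates (which by $(A_1)$ happens only at critical points of $J$, excluded by $\chi$). Once this invariance and the membership $\eta_t \in \Theta$ are established, the deformation and genus parts of the proof are a direct equivariant adaptation of the classical Ljusternik-Schnirelmann machinery.
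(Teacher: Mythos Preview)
The paper does not prove Theorem~3.1 at all: it is quoted verbatim as ``an abstract critical point theorem in \cite{Liu-Liu-Wang2016JDE}'' and invoked as a black box, with the rest of Section~3 devoted to verifying its hypotheses $(I_1)$, $(I_2)$, $(A_1)$, $(A_2)$, $(\Gamma)$ for the specific functional $\Phi_\varepsilon$. So there is no proof in the paper for you to match.

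That said, your outline is a faithful reconstruction of the argument in the cited reference. The scheme---build an odd locally Lipschitz descending flow from $u\mapsto u-Au$, use $(A_2)$ plus convexity of $P,Q$ to obtain positive invariance (hence each time-$t$ map lies in $\Theta$), then run the equivariant deformation/genus machinery---is exactly how Liu--Liu--Wang proceed. Your identification of the invariance of $\overline{P}$ under the flow as the delicate point is also correct; in the original this is handled by a Brezis--Martin style tangent-cone argument exploiting that $A(\partial P)\subset P$ with $P$ open convex. One minor remark: in the stability-of-$\Gamma_j$ step you should check that $\eta\in\Theta$ composed with $\zeta\in\Theta$ again fixes $\{J<0\}$; this uses that $J$ is nonincreasing along the flow, which your construction gives but which you did not state explicitly.
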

\par
In the following, we verify that the functional $\Phi_\varepsilon$ satisfies all the assumptions of Theorem \ref{Thm3.1}. In Lemma \ref{Lem2.2} we have proved that $\Phi_\varepsilon$ satisfies the assumption $(I_1)$, i.e. the $(PS)$ condition.
\par
Now we introduce an auxiliary operator $A_\varepsilon$. Precisely, the operator $A_\varepsilon$ is defined as follows: for any $u\in H^1(\mathbb{R}^3), v = A_\varepsilon(u)\in H^1(\mathbb{R}^3)$ is the unique solution to the equation
\begin{equation}\label{eq3.1}
-\Delta v+V(\varepsilon x)v+\phi_uv+2\beta \kappa(u)\chi_\varepsilon(x) v=K(\varepsilon x)f(u),~v\in H^1(\mathbb{R}^3)£¬
\end{equation}
where $\kappa(u)=\big(\int_{\mathbb{R}^3}\chi_\varepsilon(x)u^2dx-1\big)_+^{\beta-1}$. Clearly, the three statements are equivalent: $u$ is a solution of equation \eqref{eq3.1}, $u$ is a critical point of $\Phi_\varepsilon$, and $u$ is a fixed point of $A_\varepsilon$.
\par
\begin{Lem}\label{Lem3.1}
The operator $A_\varepsilon$ is well defined and is locally Lipschitz continuous on $H^1(\mathbb{R}^3)$.
\end{Lem}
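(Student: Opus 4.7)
The plan is to invoke the Lax--Milgram theorem to get existence and uniqueness of $v=A_\varepsilon(u)$, and then obtain the local Lipschitz estimate by testing the equation satisfied by the difference $v_1-v_2$ with itself and controlling each term with Hardy--Littlewood--Sobolev plus the smoothness of $\kappa$ and $f$.

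For well-definedness, I would fix $u\in H^1(\mathbb{R}^3)$ and define the symmetric bilinear form
\begin{equation*}
a_u(v,w)=\int_{\mathbb{R}^3}\nabla v\cdot\nabla w\,dx+\int_{\mathbb{R}^3}V(\varepsilon x)vw\,dx+\int_{\mathbb{R}^3}\phi_u vw\,dx+2\beta\kappa(u)\int_{\mathbb{R}^3}\chi_\varepsilon(x)vw\,dx
\end{equation*}
together with the linear functional $\ell_u(w)=\int_{\mathbb{R}^3}K(\varepsilon x)f(u)w\,dx$. Coercivity is immediate from $(V)$ and the positivity of $\phi_u$, $\chi_\varepsilon$, $\kappa(u)$, giving $a_u(v,v)\ge\min\{1,a_1\}\|v\|^2$. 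Boundedness of $a_u$ follows from $V\in L^\infty$, $\chi_\varepsilon\in L^\infty$ (with bound depending on $\varepsilon$), and the HLS estimate $\int_{\mathbb{R}^3}\phi_u vw\,dx\le C\|u\|^2\|v\|\|w\|$ that underlies \eqref{eq2.3}. Boundedness of $\ell_u$ is immediate from $(f_1)$--$(f_2)$ and Sobolev. Lax--Milgram then yields a unique $v=A_\varepsilon(u)\in H^1(\mathbb{R}^3)$, and plugging $v$ itself into the equation gives an a priori bound $\|A_\varepsilon(u)\|\le C(\|u\|)$ which is uniform on bounded sets of $H^1(\mathbb{R}^3)$.

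For the local Lipschitz estimate, I would fix a bounded set $B\subset H^1(\mathbb{R}^3)$, take $u_1,u_2\in B$, set $v_i=A_\varepsilon(u_i)$ and $w=v_1-v_2$. Subtracting the two defining equations \eqref{eq3.1} and using
\begin{equation*}
\phi_{u_1}v_1-\phi_{u_2}v_2=\phi_{u_1}w+(\phi_{u_1}-\phi_{u_2})v_2,\qquad \kappa(u_1)v_1-\kappa(u_2)v_2=\kappa(u_1)w+(\kappa(u_1)-\kappa(u_2))v_2,
\end{equation*}
and testing with $w$, the coercive part dominates and one is left with
\begin{equation*}
\min\{1,a_1\}\|w\|^2\le T_1+T_2+T_3,
\end{equation*}
where $T_1$ collects the $f$-difference, $T_2$ the $\phi$-difference and $T_3$ the $\kappa$-difference. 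For $T_1$ I would apply the mean value theorem with $(f_1)$--$(f_2)$ and H\"older, mirroring \eqref{eq2.11}, to obtain $T_1\le C(B)\|u_1-u_2\|\|w\|$. For $T_2$ I would use HLS on $(\phi_{u_1}-\phi_{u_2})$ together with the factorization $u_1^2-u_2^2=(u_1+u_2)(u_1-u_2)$, as in \eqref{eq2.15}, to obtain $T_2\le C(B)\|u_1-u_2\|\|w\|$. For $T_3$, since $\beta-1>1$ the map $t\mapsto(t-1)_+^{\beta-1}$ is $C^1$, hence locally Lipschitz, and combined with $\chi_\varepsilon\in L^\infty$ and the a priori bound on $\|v_2\|$ one gets $T_3\le C(B,\varepsilon)\|u_1-u_2\|\|w\|$. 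Dividing by $\|w\|$ yields the desired estimate $\|A_\varepsilon(u_1)-A_\varepsilon(u_2)\|\le C(B,\varepsilon)\|u_1-u_2\|$.

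The main obstacle I anticipate is handling the two nonlocal and non-smooth pieces simultaneously: the convolution term $\phi_u v$ depends quadratically on $u$, so its difference forces a careful HLS decomposition, and $\kappa(u)$ is a composition involving the positive part, so one has to exploit $\beta-1>1$ to extract genuine Lipschitz behaviour on bounded sets. Both ingredients are available, and the constants can only be controlled on bounded sets, which is exactly the local Lipschitz statement of the lemma.
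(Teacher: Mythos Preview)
Your proposal is correct and follows essentially the same approach as the paper: the Lipschitz estimate is obtained by the same decomposition into the $f$-, $\phi$- and $\kappa$-differences, handled respectively by the mean value theorem, HLS, and the local Lipschitz continuity of $t\mapsto (t-1)_+^{\beta-1}$ (the paper phrases this last step via the elementary inequality $|a^\alpha-b^\alpha|\le\alpha\max\{a^{\alpha-1},b^{\alpha-1}\}|a-b|$). The only minor difference is in well-definedness: you invoke Lax--Milgram directly, whereas the paper realizes $v$ as the unique minimizer of the associated quadratic functional $J_\varepsilon$; since the equation for $v$ is linear, both arguments are equivalent and equally short.
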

\begin{proof}
We first prove $v$ can be obtained by solving the following minimization problem:
\begin{equation*}
\inf\{J_\varepsilon(v):v\in H^1(\mathbb{R}^3)\},
\end{equation*}
where
\begin{equation*}
\begin{split}
J_\varepsilon(v)=&\frac{1}{2}\int_{\mathbb{R}^3}|\nabla v|^2dx+\frac{1}{2}\int_{\mathbb{R}^3}V(\varepsilon x)v^2dx+\frac{1}{2}\int_{\mathbb{R}^3}\phi_uv^2dx\\
&+2\beta\kappa(u)\int_{\mathbb{R}^3}\chi_\varepsilon(x)v^2dx-\int_{\mathbb{R}^3}K(\varepsilon x)f(u)vdx.
\end{split}
\end{equation*}
In fact, by \eqref{eq1.9}, we have
\begin{equation*}
\begin{split}
J_\varepsilon(v)&\geq\frac{1}{2}\int_{\mathbb{R}^3}|\nabla v|^2dx+\frac{1}{2}\int_{\mathbb{R}^3}V(\varepsilon x)v^2dx-C\int_{\mathbb{R}^3}(|u|+|u|^{p-1})|v|dx\\
&\geq \min\{\frac{1}{2},\frac{a_1}{2}\}\|v\|^2-C\|u\|_2\|v\|_2-C\|u\|_p^{p-1}\|v\|_p\\
&\geq \min\{\frac{1}{2},\frac{a_1}{2}\}\|v\|^2-C\|v\|,
\end{split}
\end{equation*}
which deduces that $J_\varepsilon$ is coercive and weakly lower semicontinuous. Next we prove that $v$ is unique. Assume $v_1, v_2$ are two solutions corresponding to $u$, then we have
\begin{equation*}
\begin{split}
\langle J_\varepsilon^\prime(v_1)-J_\varepsilon^\prime(v_2),v_1-v_2\rangle&=
\int_{\mathbb{R}^3}|\nabla (v_1-v_2)|^2dx+\int_{\mathbb{R}^3}V(\varepsilon x)(v_1-v_2)^2dx\\
&\quad+\int_{\mathbb{R}^3}\phi_u(v_1-v_2)^2dx+2\beta\kappa(u)\int_{\mathbb{R}^3}\chi_\varepsilon(x)(v_1-v_2)^2dx\\
&\geq \min\{\frac{1}{2},\frac{a_1}{2}\}\|v_1-v_2\|^2.
\end{split}
\end{equation*}
Thus we have $v_1 = v_2$.
\par
Clearly, $A_\varepsilon$ maps bounded sets into bounded sets.  Now we will show that the map $A_\varepsilon$ is locally Lipschitz continuous. Let $\bar{u}=A_\varepsilon(u)$ and $\bar{v}=A_\varepsilon(v)$, we have
\begin{equation*}
\begin{split}
\min\{1,a_1\}\|\bar{u}-\bar{v}\|^2&\leq \int_{\mathbb{R}^3}|\nabla(\bar{u}-\bar{v})|^2dx+\int_{\mathbb{R}^3}V(\varepsilon x)|\bar{u}-\bar{v}|^2dx\\
&=\int_{\mathbb{R}^3}K(\varepsilon x)(f(u)-f(v))(\bar{u}-\bar{v})dx-\int_{\mathbb{R}^3}(\phi_u \bar{u}-\phi_v \bar{v})(\bar{u}-\bar{v})dx\\
&\quad-2\beta \kappa(u)\int_{\mathbb{R}^3}\chi_\varepsilon(x)\bar{u}(\bar{u}-\bar{v})dx+2\beta \kappa(v)\int_{\mathbb{R}^3}\chi_\varepsilon(x)\bar{v}(\bar{u}-\bar{v})dx\\
&\leq
\int_{\mathbb{R}^3}K(\varepsilon x)(f(u)-f(v))(\bar{u}-\bar{v})dx+\int_{\mathbb{R}^3}(\phi_v-\phi_u)\bar{v}(\bar{u}-\bar{v})dx\\
&\quad+2\beta\big(\kappa(v)-\kappa(u)\big)\int_{\mathbb{R}^3}\chi_\varepsilon(x)\bar{v}(\bar{u}-\bar{v})dx\\
&=\Pi_1+\Pi_2+\Pi_3.
\end{split}
\end{equation*}
Using the mean value theorem, we get that there exists $0 <\theta(x) <1$ such that
\begin{equation}\label{eq3.2}
\begin{split}
\Pi_1&\leq C\big|\int_{\mathbb{R}^3}(f(u)-f(v))(\bar{u}-\bar{v})dx\big|=C\big|\int_{\mathbb{R}^3}f^\prime(\theta u+(1-\theta)v)(u-v)(\bar{u}-\bar{v})dx\big|\\
&\leq C\int_{\mathbb{R}^3}|u-v||\bar{u}-\bar{v}|dx+C\int_{\mathbb{R}^3}(|u|^{p-2}+|v|^{p-2})|u-v||\bar{u}-\bar{v}|dx\\
&\leq C\|u-v\|_2\|\bar{u}-\bar{v}\|_2+C(\|u\|_p^{p-2}+\|v\|_p^{p-2})\|u-v\|_p\|\bar{u}-\bar{v}\|_p\\
&\leq C\|u-v\|\|\bar{u}-\bar{v}\|.
\end{split}
\end{equation}
Next, we estimate the second term $\Pi_2$. By Hardy-Littlewood-Sobolev inequality and H\"older inequality, one has
\begin{equation}\label{eq3.3}
\begin{split}
\Pi_2&\leq \big|\int_{\mathbb{R}^3}(\phi_v-\phi_u)\bar{v}(\bar{u}-\bar{v})dx\big|\\
&\leq\|u-v\|_{\frac{12}{5}}\|u+v\|_{\frac{12}{5}}\|\bar{v}\|_{\frac{12}{5}}\|\bar{u}-\bar{v}\|_{\frac{12}{5}}\\
&\leq C\|u-v\|\|u+v\|\|\bar{v}\|\|\bar{u}-\bar{v}\|\\
&\leq C\|u-v\|\|\bar{u}-\bar{v}\|.
\end{split}
\end{equation}
Now, we estimate the third term $\Pi_3$. Using the elementary inequality $|a^\alpha-b^\alpha|\leq \alpha \max\{a^{\alpha-1},b^{\alpha-1}\}|a-b|$, which holds for $\alpha \geq 1$ and $a, b\geq 0$, we have
\begin{equation*}
\begin{split}
|\kappa(v)-\kappa(u)|&\leq (\beta -1)\max\bigg\{\big(\int_{\mathbb{R}^3}\chi_\varepsilon(x)u^2dx-1\big)_+^{\beta-2},\big(\int_{\mathbb{R}^3}\chi_\varepsilon(x)v^2dx-1\big)_+^{\beta-2}\bigg\}\cdot\\
&\quad\bigg|\big(\int_{\mathbb{R}^3}\chi_\varepsilon(x)u^2dx-1\big)_+-\big(\int_{\mathbb{R}^3}\chi_\varepsilon(x)v^2dx-1\big)_+\bigg|\\
&\leq C\int_{\mathbb{R}^3}\chi_\varepsilon(x)|u^2-v^2|dx\\
&\leq C\|u-v\|.
\end{split}
\end{equation*}
Thus,
\begin{equation}\label{eq3.4}
\Pi_3\leq C\|u-v\|\|\bar{v}\|\|\bar{u}-\bar{v}\|\leq C\|u-v\|\|\bar{u}-\bar{v}\|.
\end{equation}
From \eqref{eq3.2}-\eqref{eq3.4}, we deduce the desired result.
\end{proof}

Let
\begin{equation*}
P^+:=\{u\in H^1(\mathbb{R}^3)|u\geq 0\}~~\text{and}~~P^-:=\{u\in H^1(\mathbb{R}^3)|u\leq 0\}.
\end{equation*}
For an arbitrary $\sigma>0$, we define
\begin{equation*}
P_\sigma^+:=\{u\in H^1(\mathbb{R}^3)|\text{dist}(u,P^+)<\sigma\}~~\text{and}~~P_\sigma^-:=\{u\in H^1(\mathbb{R}^3)|\text{dist}(u,P^-)<\sigma\},
\end{equation*}
where $\text{dist}(u,P^{\pm})=\inf\limits_{w\in P^\pm}\|u-w\|$. Obviously, $P_\sigma^-=-P_\sigma^+$, and $O=P_\sigma^+\cup P_\sigma^-$ is a open and symmetric subset of $H^1(\mathbb{R}^3)$ and $H^1(\mathbb{R}^3)\setminus O$ contains only sign-changing function.
\par
We verify the assumption $(I_2)$ of Theorem \ref{Thm3.1}.
\begin{Lem}\label{Lem3.2}
There exists $\sigma^\prime>0$ such that for $\sigma\in(0,\sigma^\prime)$, there holds
\begin{equation*}
\Phi_\varepsilon(u)\geq \frac{\min\{1,a_1\}}{4}\sigma^2~\text{for}~u\in \Sigma=\partial P_\sigma^+\cap \partial P_\sigma^-,
\end{equation*}
and then $c_\varepsilon^*:=\inf\limits_{u\in\Sigma}\Phi_\varepsilon(u)\geq \frac{\min\{1,a_1\}}{4}\sigma^2$.
\end{Lem}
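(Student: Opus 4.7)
Since $Q_\varepsilon(u)\geq 0$ and $\phi_u\geq 0$ pointwise, I can discard the Hartree and penalization terms at the outset, reducing the problem to
\begin{equation*}
\Phi_\varepsilon(u)\geq \tfrac{1}{2}\min\{1,a_1\}\|u\|^2-\int_{\R^3}K(\varepsilon x)F(u)\,dx.
\end{equation*}
The strategy is then to exploit the two-sided control that membership in $\Sigma$ gives on $u^{\pm}$: on $\Sigma$ the full $H^1$ norm $\|u\|$ is bounded below by a multiple of $\sigma$, while the $L^q$ norms of $u^{\pm}$ for $q\in[2,6]$ are bounded above by a constant multiple of $\sigma$. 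Feeding these into the nonlinearity estimate from Remark \ref{Rek1.1} and then shrinking $\sigma$ completes the proof.

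\textbf{Distance comparison.} The lower bound is immediate: since $u^+\in P^+$, one has $\sigma=\text{dist}(u,P^+)\leq\|u-u^+\|=\|u^-\|$, and symmetrically $\|u^+\|\geq \sigma$; because $u^+$ and $u^-$ have disjoint supports (and hence orthogonal gradients a.e.), $\|u\|^2=\|u^+\|^2+\|u^-\|^2\geq 2\sigma^2$. For the upper bound on $\|u^-\|_q$ I would use the pointwise observation that for every $v\in P^+$ one has $v\geq 0$, so wherever $u(x)<0$, $|u(x)-v(x)|\geq|u(x)|=|u^-(x)|$; since $u^-$ vanishes on $\{u\geq 0\}$, this gives $|u^-|\leq|u-v|$ everywhere. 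Integrating and using the Sobolev embedding $H^1(\R^3)\hookrightarrow L^q(\R^3)$ for $q\in[2,6]$ yields $\|u^-\|_q\leq C_q\|u-v\|$, and taking the infimum over $v\in P^+$ gives $\|u^-\|_q\leq C_q\,\text{dist}(u,P^+)=C_q\sigma$ for $u\in\partial P_\sigma^+$; a symmetric argument applies to $u^+$ on $\partial P_\sigma^-$.

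\textbf{Conclusion.} On $\Sigma$ the previous step produces $\|u\|_2^2\leq 2C_2^2\sigma^2$ and $\|u\|_p^p\leq 2C_p^p\sigma^p$. Using $F(u)\leq \delta u^2+C(\delta)|u|^p$ together with $K(\varepsilon x)\leq b_2$,
\begin{equation*}
\int_{\R^3}K(\varepsilon x)F(u)\,dx\leq 2b_2\delta C_2^2\sigma^2+2b_2 C(\delta)C_p^p\sigma^p.
\end{equation*}
Choose $\delta$ so that $2b_2\delta C_2^2\leq\tfrac{1}{2}\min\{1,a_1\}$; combining with $\|u\|^2\geq 2\sigma^2$ gives
\begin{equation*}
\Phi_\varepsilon(u)\geq \min\{1,a_1\}\sigma^2-\tfrac{1}{2}\min\{1,a_1\}\sigma^2-C\sigma^p=\tfrac{1}{2}\min\{1,a_1\}\sigma^2-C\sigma^p.
\end{equation*}
Since $p>4>2$, choosing $\sigma'>0$ small so that $C(\sigma')^{p-2}\leq\tfrac{1}{4}\min\{1,a_1\}$ forces $\Phi_\varepsilon(u)\geq\tfrac{1}{4}\min\{1,a_1\}\sigma^2$ for all $u\in\Sigma$ and $\sigma\in(0,\sigma')$, which is the claim. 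The only nontrivial step is the pointwise comparison $|u^-|\leq|u-v|$ converting the abstract distance $\sigma$ into usable $L^q$ bounds on $u^{\pm}$; once that is in hand the remainder is a standard choose-$\delta$-then-shrink-$\sigma$ computation.
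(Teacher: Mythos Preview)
Your proof is correct and follows essentially the same approach as the paper's: both drop the nonnegative Hartree and penalization terms, use the pointwise comparison $|u^\pm|\leq|u-w|$ for $w\in P^\mp$ together with Sobolev embedding to convert $\text{dist}(u,P^\mp)=\sigma$ into $L^p$ bounds on $u^\pm$, and use $\|u^\pm\|\geq\sigma$ for the lower bound on the quadratic part. The only cosmetic differences are that you exploit the orthogonal splitting $\|u\|^2=\|u^+\|^2+\|u^-\|^2\geq 2\sigma^2$ (the paper uses only $\|u\|\geq\sigma$) and you handle the $\delta u^2$ piece of $F$ via the bound $\|u\|_2\leq C_2\sigma$ rather than by absorbing it into $\|u\|^2$; neither changes the argument in any substantive way.
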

\begin{proof}
For any $u\in \partial P_\sigma^+\cap \partial P_\sigma^-$, there exist $c_p>0$ such that
\begin{equation*}
\|u^\pm\|_p=\inf_{w\in P^\mp}\|u-w\|_p\leq c_p\inf_{w\in P^\mp}\|u-w\|=c_p\text{dist}(u,P^\mp)=c_p\sigma,
\end{equation*}
which implies that $\|u\|_p\leq 2c_p\sigma$. Clearly, $\|u^\pm\|\geq \text{dist}(u,P^\mp)=\sigma$. Thus, by \eqref{eq1.9}, one has
\begin{equation*}
\begin{split}
\Phi_\varepsilon(u)&\geq \frac{1}{2}\int_{\mathbb{R}^3}|\nabla u|^2dx+\frac{1}{2}\int_{\mathbb{R}^3}V(\varepsilon x)u^2dx-\int_{\mathbb{R}^3}K(\varepsilon x)F(u)dx\\
&\geq \frac{\min\{1,a_1\}}{2}\|u\|^2-C\|u\|_p^p\\
& \geq \frac{\min\{1,a_1\}}{2}\sigma^2-C\sigma^p\\
&\geq \frac{\min\{1,a_1\}}{4}\sigma^2,
\end{split}
\end{equation*}
for $\sigma$ small enough, and the proof is completed.
\end{proof}
\par
Now, we verify the assumption $(A_2)$ of Theorem \ref{Thm3.1}.
\begin{Lem}\label{Lem3.3}
There exists $0<\sigma_0<\sigma^\prime$ such that for $\sigma\in(0,\sigma_0)$,
\begin{equation*}
A(\partial P_\sigma^-)\subset P_\sigma^-,~~A(\partial P_\sigma^+)\subset P_\sigma^+.
\end{equation*}
\end{Lem}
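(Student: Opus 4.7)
\textbf{Proof plan for Lemma \ref{Lem3.3}.} Since $f$ is odd and both $\phi_u$ and $\kappa(u)=\bigl(\int\chi_\varepsilon u^2-1\bigr)_+^{\beta-1}$ depend only on $u^2$, the defining equation \eqref{eq3.1} is invariant under the substitution $(u,v)\mapsto(-u,-v)$, so $A_\varepsilon$ is odd. Hence $A_\varepsilon(\partial P_\sigma^+)\subset P_\sigma^+$ will follow immediately from $A_\varepsilon(\partial P_\sigma^-)\subset P_\sigma^-$, and I only need to prove the latter. Fix $u\in\partial P_\sigma^-$ and set $v=A_\varepsilon(u)$. Since $v^-\in P^-$ and $v-v^-=v^+$, one has $\text{dist}(v,P^-)\leq\|v^+\|$, so it suffices to show $\|v^+\|<\sigma$ for all sufficiently small $\sigma$.

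The core of the argument is to test \eqref{eq3.1} against $v^+\in H^1(\R^3)$. Using the standard identities $\nabla v\cdot\nabla v^+=|\nabla v^+|^2$ and $v\,v^+=(v^+)^2$ a.e., together with the non-negativity of $V$, $\phi_u$, $\chi_\varepsilon$ and $\kappa(u)$, I obtain
\begin{equation*}
\min\{1,a_1\}\,\|v^+\|^2 \;\leq\; \int_{\R^3}K(\varepsilon x)f(u)v^+\,dx.
\end{equation*}
By $(f_3)$, $f(t)$ has the sign of $t$, so $f(u)v^+\leq 0$ on $\{u\leq 0\}$; therefore $\int K f(u)v^+\,dx\leq \int K f(u^+)v^+\,dx$. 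Applying the first estimate in \eqref{eq1.9} to $u^+\geq 0$, then H\"older's inequality and the Sobolev embedding $H^1(\R^3)\hookrightarrow L^p(\R^3)$ (valid because $p\in(4,6)\subset[2,6]$), the right-hand side is bounded by
\begin{equation*}
b_2\delta\|u^+\|_2\|v^+\|_2 + b_2 C(\delta)\|u^+\|_p^{p-1}\|v^+\|_p \;\leq\; \bigl(b_2\delta\|u^+\|_2 + C_1(\delta)\|u^+\|_p^{p-1}\bigr)\|v^+\|,
\end{equation*}
where $C_1(\delta)=b_2 C(\delta)C_p$ and $C_p$ is the Sobolev constant.

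To control the $u^+$-factors by $\sigma$, I would use the pointwise inequality $u^+\leq|u-w|$ valid for every $w\in P^-$ (trivial case split: if $u\geq 0$ then $u-w\geq u=u^+\geq 0$; if $u<0$ then $u^+=0$). This gives $\|u^+\|_q\leq\|u-w\|_q\leq C_q\|u-w\|$ for $q\in\{2,p\}$, and taking the infimum over $w\in P^-$ yields $\|u^+\|_2\leq\sigma$ and $\|u^+\|_p\leq C_p\sigma$. Substituting back,
\begin{equation*}
\|v^+\|\;\leq\;\frac{\sigma}{\min\{1,a_1\}}\bigl(b_2\delta + C_1(\delta)C_p^{p-1}\sigma^{p-2}\bigr).
\end{equation*}
Since $p>4>2$, choosing $\delta$ so small that $b_2\delta<\tfrac12\min\{1,a_1\}$ and then $\sigma_0\in(0,\sigma')$ so small that $C_1(\delta)C_p^{p-1}\sigma_0^{p-2}<\tfrac12\min\{1,a_1\}$ forces $\|v^+\|<\sigma$ for all $\sigma\in(0,\sigma_0)$, hence $v\in P_\sigma^-$.

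The only real delicate point is the sign bookkeeping: one must invoke $(f_3)$ to discard the contribution on $\{u\leq 0\}$ so that the growth bound \eqref{eq1.9} can be applied with $u^+$ rather than $u$. The nonlocal term is actually painless here because in equation \eqref{eq3.1} the auxiliary operator $A_\varepsilon$ was set up with the linear factor $\phi_u v$ (coefficient $\phi_u\geq 0$ frozen at $u$), not the quadratic $\phi_v v$, so it simply drops out on the favorable side of the inequality.
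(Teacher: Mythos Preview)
Your proof is correct and follows essentially the same route as the paper's: test \eqref{eq3.1} against $v^+$, drop the nonnegative terms $\phi_u (v^+)^2$ and $\kappa(u)\chi_\varepsilon (v^+)^2$, replace $f(u)v^+$ by $f(u^+)v^+$ via the sign property of $f$, and control $\|u^+\|_2,\|u^+\|_p$ by $\text{dist}(u,P^-)=\sigma$. If anything, your write-up is slightly more careful than the paper's in justifying the passage from $f(u)v^+$ to $f(u^+)v^+$ as an inequality (the paper writes an equality there, which is not literally true on $\{u<0,\ v^+>0\}$) and in spelling out the pointwise bound $u^+\le|u-w|$ for $w\in P^-$.
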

\begin{proof}
Since the two conclusions are similar, we only prove the first one. Let $u\in H^1(\mathbb{R}^3)$ and $v=A_\varepsilon(u)$. It follows from \eqref{eq1.9} and the fact $\text{dist}(v,P^-)\leq\|v^+\|$ that
 \begin{equation*}
 \begin{split}
\text{dist}(v,P^-)\|v^+\|&\leq\|v^+\|^2\leq \min\{1,a_1\}^{-1}\bigg(\int_{\mathbb{R}^3}\nabla v \nabla v^+dx+\int_{\mathbb{R}^3}V(\varepsilon x)vv^+dx\bigg)\\
&=\min\{1,a_1\}^{-1}\bigg(\int_{\mathbb{R}^3}K(\varepsilon x)f(u)v^+dx-2\beta\kappa(u)
\int_{\mathbb{R}^3}\chi_\varepsilon(x)vv^+dx-\int_{\mathbb{R}^3}\phi_u vv^+dx\bigg)\\
&\leq C\int_{\mathbb{R}^3}f(u)v^+dx\\\
&= C\int_{\mathbb{R}^3}f(u^+)v^+dx\\
&\leq \int_{\mathbb{R}^3}\big(\delta|u^+|+C(\delta)|u^+|^{p-1}\big)|v^+|dx\\
&\leq \delta \|u^+\|_2\|v^+\|_2+C(\delta)\|u^+\|_p^{p-1}\|v^+\|_p\\
&\leq C\big(\delta \text{dist}(u,P^-)+C(\delta) \text{dist}(u,P^-)^{p-1}\big)\|v^+\|.
\end{split}
\end{equation*}
It follows that
\begin{equation*}
\text{dist}(A(u),P^-)\leq C\big(\delta \text{dist}(u,P^-)+C(\delta)\text{dist}(u,P^-)^{p-1}\big).
\end{equation*}
Thus, choosing $\delta$ small enough, there exists $\sigma_0>0$ such that for $\sigma\in(0,\sigma_0)$,
\begin{equation*}
\text{dist}(A(u),P^-)\leq\frac{1}{2}\text{dist}(u,P^-)~~\text{for any}~~u\in P_\sigma^-.
\end{equation*}
This implies that $A(\partial P_\sigma^-)\subset P_\sigma^-$.
\end{proof}
\par
To verify the assumption $(A_1)$, we need the following Lemma.
\begin{Lem}\label{Lem3.4}
For any $u\in H^1(\mathbb{R}^3)$, one has
\begin{equation}\label{eq3.5}
\langle\Phi_\varepsilon^\prime(u),u-A_\varepsilon(u)\rangle\geq\min\{1,a_1\}\|u-A_\varepsilon(u)\|^2.
\end{equation}
Moreover, there exists $C>0$ such that
\begin{equation}\label{eq3.6}
 \|\Phi_\varepsilon^\prime(u)\|\leq \|u-A_\varepsilon(u)\|(1+C\|u\|^{2\beta-2}).
 \end{equation}
\end{Lem}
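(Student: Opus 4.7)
The plan is to exploit the fact that $v := A_\varepsilon(u)$ is tailor-made so that its defining equation (3.1) rewrites $K(\varepsilon x)f(u)$ as the linear combination $-\Delta v + V(\varepsilon x)v + \phi_u v + 2\beta\kappa(u)\chi_\varepsilon(x)v$. Substituting this identity into the explicit formula for $\langle\Phi_\varepsilon'(u),\cdot\rangle$ lets me re-express $\Phi_\varepsilon'(u)$ as a bilinear expression in $u-v$, at which point both inequalities follow from standard estimates.

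For \eqref{eq3.5} I would test $\Phi_\varepsilon'(u)$ against $w = u-v$ and test (3.1) against the same $w$; the nonlinear integrals $\int K(\varepsilon x)f(u)(u-v)\,dx$ cancel exactly, leaving
\begin{equation*}
\langle \Phi_\varepsilon'(u), u-v\rangle = \int_{\mathbb{R}^3}\!\bigl(|\nabla(u-v)|^2 + V(\varepsilon x)(u-v)^2 + \phi_u(u-v)^2 + 2\beta\kappa(u)\chi_\varepsilon(x)(u-v)^2\bigr)dx.
\end{equation*}
Since $\phi_u\geq 0$, $\chi_\varepsilon\geq 0$ and $\kappa(u)\geq 0$, the last two terms are nonnegative and can be discarded, and assumption $(V)$ bounds the first two below by $\min\{1,a_1\}\|u-v\|^2$.

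For \eqref{eq3.6} I would apply the same substitution but test against an arbitrary $w\in H^1(\mathbb{R}^3)$, obtaining
\begin{equation*}
\langle \Phi_\varepsilon'(u), w\rangle = \int_{\mathbb{R}^3}\!\bigl(\nabla(u-v)\nabla w + V(\varepsilon x)(u-v)w + \phi_u(u-v)w + 2\beta\kappa(u)\chi_\varepsilon(x)(u-v)w\bigr)dx.
\end{equation*}
The first two terms give $C\|u-v\|\|w\|$ by Cauchy--Schwarz and $(V)$. For the nonlocal term I would apply Lemma \ref{Lem2.1} with $t=r=6/5$, $\mu=1$ together with the Sobolev embedding $H^1\hookrightarrow L^{12/5}$ to get a contribution $\leq C\|u\|^2\|u-v\|\|w\|$. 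The penalization term is controlled using $\|\chi_\varepsilon\|_\infty\leq \varepsilon^{-6}$ (which gives $\kappa(u)\leq C_\varepsilon\|u\|^{2(\beta-1)}$ and $\int\chi_\varepsilon|u-v||w|\,dx\leq C_\varepsilon\|u-v\|\|w\|$), producing a contribution $\leq C_\varepsilon\|u\|^{2\beta-2}\|u-v\|\|w\|$.

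Collecting the three estimates and using $2\beta-2>2$ to absorb $\|u\|^2\leq 1+\|u\|^{2\beta-2}$, I arrive at $|\langle\Phi_\varepsilon'(u),w\rangle|\leq (1+C\|u\|^{2\beta-2})\|u-v\|\|w\|$, which after taking the supremum over $\|w\|\leq 1$ is precisely \eqref{eq3.6}. The computation is fairly mechanical; the only conceptual point is to recognise the ``linearisation trick'' via $A_\varepsilon$, and the only delicate estimate is the nonlocal one, where Hardy--Littlewood--Sobolev must be applied with care to avoid an artificial loss of regularity. I do not anticipate any genuine obstruction beyond this.
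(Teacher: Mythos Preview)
Your proposal is correct and follows essentially the same approach as the paper: both test the expression for $\Phi_\varepsilon'(u)$ and the defining equation (3.1) of $A_\varepsilon(u)$ against $u-A_\varepsilon(u)$ (resp.\ an arbitrary $w$) to cancel the $K(\varepsilon x)f(u)$ term, then drop the nonnegative $\phi_u$ and $\chi_\varepsilon$ contributions for \eqref{eq3.5} and estimate them via Hardy--Littlewood--Sobolev and the $L^\infty$-bound on $\chi_\varepsilon$ for \eqref{eq3.6}. Your step of explicitly absorbing the $C\|u\|^2$ contribution from the nonlocal term into $1+C\|u\|^{2\beta-2}$ via $\beta>2$ is in fact more carefully spelled out than in the paper, which jumps directly to the stated inequality.
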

\begin{proof}
Since $A_\varepsilon(u)$ is the solution of equation \eqref{eq3.1}, by a direct computation, we see that
 \begin{equation*}
 \begin{split}
\langle \Phi_\varepsilon^\prime(u),u-A_\varepsilon(u)\rangle&=\int_{\mathbb{R}^3}|\nabla (u-A_\varepsilon(u))|^2dx+\int_{\mathbb{R}^3}V(\varepsilon x)| u-A_\varepsilon(u)|^2dx+\int_{\mathbb{R}^3}\phi_u(u-A_\varepsilon(u))^2dx\\
&\quad+2\beta\kappa(u)\int_{\mathbb{R}^3}\chi_\varepsilon(x)(u-A_\varepsilon(u))^2dx\\
&\geq \min\{1,a_1\}\|u-A_\varepsilon(u)\|^2.
\end{split}
\end{equation*}
For all $\varphi\in H^1(\mathbb{R}^3)$, we have
\begin{equation*}
\begin{split}
\langle \Phi_\varepsilon^\prime(u),\varphi\rangle&=\int_{\mathbb{R}^3}\nabla (u-A_\varepsilon(u)) \nabla \varphi dx+\int_{\mathbb{R}^3}V(\varepsilon x)( u-A_\varepsilon(u))\varphi dx+\int_{\mathbb{R}^3}\phi_u(u-A_\varepsilon(u))\varphi dx\\
&\quad+2\beta\kappa(u)\int_{\mathbb{R}^3}\chi_\varepsilon(x)(u-A_\varepsilon(u))\varphi dx.
\end{split}
\end{equation*}
Note that
\begin{equation*}
\big|\int_{\mathbb{R}^3}\phi_u(u-A_\varepsilon(u))\varphi dx\big|\leq C\|u\|^2\|u-A_\varepsilon(u)\|\|\varphi\|,
\end{equation*}
and
\begin{equation*}
\big|\int_{\mathbb{R}^3}\chi_\varepsilon(x)(u-A_\varepsilon(u))\varphi dx\big|\leq C\|u-A_\varepsilon(u)\|\|\varphi\|.
\end{equation*}
Moreover,
\begin{equation*}
\kappa(u)=\big(\int_{\mathbb{R}^3}\chi_\varepsilon(x)u^2dx-1\big)_+^{\beta-1}\leq \big(2\int_{\mathbb{R}^3}\chi_\varepsilon(x)u^2dx\big)^{\beta-1}\leq C\|u\|^{2\beta-2}.
\end{equation*}
Thus, $\|\Phi_\varepsilon^\prime(u)\|\leq \|u-A_\varepsilon(u)\|(1+C\|u\|^{2\beta-2})$ for all $u\in H^1(\mathbb{R}^3)$.
\end{proof}
\begin{Lem}\label{Lem3.5}
Given $c_0,b_0 >0$, there exists $b=b(c_0,b_0)>0$ such that if $\|\Phi_\varepsilon^\prime(u)\|\geq b_0,|\Phi_\varepsilon(u)|\leq c_0$, then
\begin{equation*}
\langle \Phi_\varepsilon^\prime (u),u-A_\varepsilon (u)\rangle\geq b\|u-A_\varepsilon(u)\|>0.
\end{equation*}
\end{Lem}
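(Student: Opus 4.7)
The plan is to argue by contradiction, combining the two-sided estimates of Lemma~\ref{Lem3.4} with the Ambrosetti--Rabinowitz computation already worked out in Lemma~\ref{Lem2.2}. Suppose the conclusion fails, so that there is a sequence $\{u_n\}\subset H^1(\mathbb{R}^3)$ with $|\Phi_\varepsilon(u_n)|\le c_0$, $\|\Phi_\varepsilon'(u_n)\|\ge b_0$, yet
\[
\langle \Phi_\varepsilon'(u_n),u_n-A_\varepsilon(u_n)\rangle \le \frac{1}{n}\|u_n-A_\varepsilon(u_n)\|,\qquad n\in\mathbb{N}.
\]
Plugging this into \eqref{eq3.5} yields $\min\{1,a_1\}\|u_n-A_\varepsilon(u_n)\|^2\le (1/n)\|u_n-A_\varepsilon(u_n)\|$, so $\|u_n-A_\varepsilon(u_n)\|\to 0$.

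Next, I would use \eqref{eq3.6} together with $\|\Phi_\varepsilon'(u_n)\|\ge b_0>0$ to conclude that $\{u_n\}$ is unbounded in $H^1(\mathbb{R}^3)$. Indeed, if along some subsequence $\|u_n\|\le M$, then \eqref{eq3.6} gives $b_0\le (1+CM^{2\beta-2})\,\|u_n-A_\varepsilon(u_n)\|\to 0$, a contradiction. Hence, up to a subsequence, $\|u_n\|\to\infty$.

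To close the argument I would recycle the Ambrosetti--Rabinowitz identity from the proof of Lemma~\ref{Lem2.2}: using $(f_3)$, the choice $\beta<\mu/2$ (which makes the penalty combination $(t-1)_+^\beta-(2\beta/\mu)(t-1)_+^{\beta-1}t$ bounded below), and $\int\phi_{u_n}u_n^2\ge 0$, one obtains
\[
\Phi_\varepsilon(u_n)-\tfrac{1}{\mu}\langle \Phi_\varepsilon'(u_n),u_n\rangle \ge C_1\|u_n\|^2-C_*
\]
for constants $C_1>0$ and $C_*>0$ depending only on the structural data. Because $|\Phi_\varepsilon(u_n)|\le c_0$ and $|\langle \Phi_\varepsilon'(u_n),u_n\rangle|\le \|\Phi_\varepsilon'(u_n)\|\|u_n\|$, this yields the linear lower bound $\|\Phi_\varepsilon'(u_n)\|\ge (\mu C_1/2)\|u_n\|$ for $n$ large. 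Substituting the refined expression $\langle \Phi_\varepsilon'(u),u\rangle=\langle\Phi_\varepsilon'(u),u-A_\varepsilon(u)\rangle+\int K(\varepsilon x)f(u)(u-A_\varepsilon(u))\,dx$ (obtained by testing the defining equation \eqref{eq3.1} for $A_\varepsilon(u)$ against $u$), using $\langle\Phi_\varepsilon'(u_n),u_n-A_\varepsilon(u_n)\rangle=o(1)$, the subcritical growth $|f(t)|\le C(|t|+|t|^{p-1})$, and $\|u_n-A_\varepsilon(u_n)\|\le 1/(n\min\{1,a_1\})$, one balances against \eqref{eq3.6} to force $\|u_n\|$ to remain bounded, producing the desired contradiction with $\|u_n\|\to\infty$.

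I expect the main obstacle to be the delicate balancing of the three growth rates: the penalty exponent $\beta$ (governing the $\|u\|^{2\beta-2}$ factor in \eqref{eq3.6} through $\kappa(u)\le C\|u\|^{2\beta-2}$), the nonlinearity exponent $p<6$ (entering the estimate of $\int K(\varepsilon x)f(u_n)(u_n-A_\varepsilon(u_n))$ via H\"older), and the AR exponent $\mu>4$ (controlling the quadratic lower bound in the energy identity). The admissible window $\beta\in(2,\mu/2)$ together with $4<p<6$ and $\mu>4$ is precisely what makes the bookkeeping close, so verifying that the penalty term does not swamp the quadratic gain on the right-hand side of the AR inequality is the key technical point.
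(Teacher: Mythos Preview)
Your opening moves are fine and match the paper: assume a sequence with $|\Phi_\varepsilon(u_n)|\le c_0$, $\|\Phi_\varepsilon'(u_n)\|\ge b_0$, for which the conclusion fails; then \eqref{eq3.5} forces $\|u_n-A_\varepsilon(u_n)\|\to 0$, and by \eqref{eq3.6} the sequence $\{u_n\}$ must be unbounded. The identity $\langle\Phi_\varepsilon'(u),u\rangle=\langle\Phi_\varepsilon'(u),u-A_\varepsilon(u)\rangle+\int K(\varepsilon x)f(u)(u-A_\varepsilon(u))\,dx$ is also correct.

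The gap is in the last ``balancing'' step. From your identity you only get
\[
|\langle\Phi_\varepsilon'(u_n),u_n\rangle|\le o(1)+C\bigl(\|u_n\|+\|u_n\|^{p-1}\bigr)\|u_n-A_\varepsilon(u_n)\|,
\]
while the AR inequality gives $|\langle\Phi_\varepsilon'(u_n),u_n\rangle|\ge c\|u_n\|^2$ for large $n$. Combining the two yields $\|u_n-A_\varepsilon(u_n)\|\gtrsim\|u_n\|^{3-p}$; since $p>4$, the right-hand side tends to $0$ and there is no contradiction with $\|u_n-A_\varepsilon(u_n)\|\to 0$. Bringing in \eqref{eq3.6} does not help either: together with $\|\Phi_\varepsilon'(u_n)\|\gtrsim\|u_n\|$ it only gives $\|u_n-A_\varepsilon(u_n)\|\gtrsim\|u_n\|^{3-2\beta}$, again vanishing because $\beta>2$. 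None of the information you have assembled rules out $\|u_n\|\to\infty$ with $\|u_n-A_\varepsilon(u_n)\|$ decaying like a suitable negative power of $\|u_n\|$. The superlinear factor $\|u_n\|^{p-1}$ coming from $\int K f(u_n)(u_n-A_\varepsilon(u_n))$ is simply too strong.

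The paper avoids this precisely by \emph{not} passing through $\langle\Phi_\varepsilon'(u),u\rangle$. It computes $\Phi_\varepsilon(u)-\tfrac{1}{\mu}(u,u-A_\varepsilon(u))_\varepsilon$, where $(\cdot,\cdot)_\varepsilon$ is the $H^1$ inner product, and uses the defining equation \eqref{eq3.1} tested against $u$ to substitute for $\int K f(u)u$. The AR condition then makes the entire $K$-term $\int K(\tfrac{1}{\mu}f(u)u-F(u))\ge 0$, so the nonlinearity drops out. What remains are only the $\phi_u$ and $\chi_\varepsilon$ cross-terms, which after Cauchy--Schwarz (see \eqref{eq3.9}--\eqref{eq3.10}) produce contributions of size $\|u\|^2\|u-A_\varepsilon(u)\|^2$, $\|u\|\|u-A_\varepsilon(u)\|$ and $\|u-A_\varepsilon(u)\|^{2\beta}$ --- all $o(\|u_n\|^2)$ once $\|u_n-A_\varepsilon(u_n)\|\to 0$. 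This yields the clean inequality \eqref{eq3.12}, from which boundedness of $\|u_n\|$ is immediate, and then \eqref{eq3.6} gives $\|\Phi_\varepsilon'(u_n)\|\to 0$, the contradiction. The point you are missing is that the $f$-term has to be eliminated via $(f_3)$ \emph{before} estimating, not bounded afterwards by subcritical growth.
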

\begin{proof}
By Lemma \ref{Lem3.4}, it suffices to prove that there exists $\beta_0>0$ such that
\begin{equation}\label{eq3.7}
\|u-A_\varepsilon (u)\|\geq \beta_0.
\end{equation}
For any $u\in H^1(\mathbb{R}^3)$, by $(f_3)$, we have
\begin{equation}\label{eq3.8}
\begin{split}
&\quad\Phi_\varepsilon(u)-\frac{1}{\mu}(u,u-A_\varepsilon(u))_\varepsilon\\
&=\frac{\mu -2}{2\mu}\big(\int_{\mathbb{R}^3}|\nabla u|^2 dx+\int_{\mathbb{R}^3}V(\varepsilon x)u^2dx\big)+
\frac{\mu-4}{4\mu}\int_{\mathbb{R}^3}\phi_uu^2dx\\
&\quad+\frac{1}{\mu}\int_{\mathbb{R}^3}\phi_uu(u-A_\varepsilon(u))dx+\int_{\mathbb{R}^3}K(\varepsilon x)(\frac{1}{\mu}f(u)u-F(u))dx\\
&\quad+\big(\int_{\mathbb{R}^3}\chi_\varepsilon(x)u^2dx-1\big)_+^\beta-\frac{2\beta}{\mu}\big(\int_{\mathbb{R}^3}\chi_\varepsilon(x)u^2dx-1\big)_+^{\beta-1}\int_{\mathbb{R}^3}\chi_\varepsilon(x)A_\varepsilon(u)udx\\
&\geq \frac{\min\{1,a_1\}(\mu -2)}{2\mu}\|u\|^2+
\frac{\mu-4}{4\mu}\int_{\mathbb{R}^3}\phi_uu^2dx+\frac{1}{\mu}\int_{\mathbb{R}^3}\phi_uu(u-A_\varepsilon(u))dx\\
&\quad+\big(\int_{\mathbb{R}^3}\chi_\varepsilon(x)u^2dx-1\big)_+^\beta-\frac{2\beta}{\mu}\big(\int_{\mathbb{R}^3}\chi_\varepsilon(x)u^2dx-1\big)_+^{\beta-1}\int_{\mathbb{R}^3}\chi_\varepsilon(x)u^2dx\\
&\quad+\frac{2\beta}{\mu}\big(\int_{\mathbb{R}^3}\chi_\varepsilon(x)u^2dx-1\big)_+^{\beta-1}\int_{\mathbb{R}^3}\chi_\varepsilon(x)(u-A_\varepsilon(u))udx.
\end{split}
\end{equation}
By Hardy-Littlewood-Sobolev inequality and H\"older inequality, one has
\begin{equation}\label{eq3.9}
\begin{split}
\bigg|\int_{\mathbb{R}^3}\phi_uu(u-A_\varepsilon(u))dx\bigg|&\leq \bigg(\int_{\mathbb{R}^3}\phi_u(u-A_\varepsilon(u))^2dx\bigg)^{\frac{1}{2}}\bigg(\int_{\mathbb{R}^3}\phi_uu^2dx\bigg)^{\frac{1}{2}}\\
&\leq\frac{\mu-4}{8}\int_{\mathbb{R}^3}\phi_uu^2dx+C\|u\|^2\|u-A_\varepsilon(u)\|^2
\end{split}
\end{equation}
and
\begin{equation}\label{eq3.10}
\begin{split}
\bigg|\int_{\mathbb{R}^3}\chi_\varepsilon(x)(u-A_\varepsilon(u))udx\bigg|&\leq \bigg(\int_{\mathbb{R}^3}\chi_\varepsilon(x)u^2dx\bigg)^{\frac{1}{2}}\bigg(\int_{\mathbb{R}^3}\chi_\varepsilon(x)(u-A_\varepsilon(u))^2dx\bigg)^{\frac{1}{2}}\\
&\leq\frac{\mu-2\beta}{4\beta}\int_{\mathbb{R}^3}\chi_\varepsilon(x)u^2dx+C\|u-A_\varepsilon(u)\|^2.
\end{split}
\end{equation}
It follows from \eqref{eq3.8}-\eqref{eq3.10} that
\begin{equation}\label{eq3.11}
\begin{split}
&\quad\frac{\min\{1,a_1\}(\mu -2)}{2\mu}\|u\|^2+\big(\int_{\mathbb{R}^3}\chi_\varepsilon(x)u^2dx-1\big)_+^\beta\\
&\leq C\|u\|^2\|u-A_\varepsilon(u)\|^2+\frac{\mu+2\beta}{2\mu}\big(\int_{\mathbb{R}^3}\chi_\varepsilon(x)u^2dx-1\big)_+^{\beta-1}\int_{\mathbb{R}^3}\chi_\varepsilon(x)u^2dx\\
&\quad+C\big(\int_{\mathbb{R}^3}\chi_\varepsilon(x)u^2dx-1\big)_+^{\beta-1}\|u-A_\varepsilon(u)\|^2+|\Phi_\varepsilon(u)|+C\|u\|\|u-A_\varepsilon(u)\|\\
&\leq C\|u\|^2\|u-A_\varepsilon(u)\|^2+\frac{\mu+2\beta}{2\mu}\big(\int_{\mathbb{R}^3}\chi_\varepsilon(x)u^2dx-1\big)_+^\beta+|\Phi_\varepsilon(u)|+C\|u\|\|u-A_\varepsilon(u)\|\\
&\quad+\big(\int_{\mathbb{R}^3}\chi_\varepsilon(x)u^2dx-1\big)_+^{\beta-1}(\frac{\mu+2\beta}{2\mu}+C\|u-A_\varepsilon(u)\|^2)\\
&\leq C\|u\|^2\|u-A_\varepsilon(u)\|^2+\frac{\mu+2\beta}{2\mu}\big(\int_{\mathbb{R}^3}\chi_\varepsilon(x)u^2dx-1\big)_+^\beta+|\Phi_\varepsilon(u)|+C\|u\|\|u-A_\varepsilon(u)\|\\
&\quad+\frac{\mu-2\beta}{4\mu}\big(\int_{\mathbb{R}^3}\chi_\varepsilon(x)u^2dx-1\big)_+^\beta+C(1+\|u-A_\varepsilon(u)\|^{2\beta})
\end{split}
\end{equation}
which implies that
\begin{equation}\label{eq3.12}
\begin{split}
&\quad\frac{\min\{1,a_1\}(\mu-2)}{2\mu}\|u\|^2\\
&\leq C\|u\|^2\|u-A_\varepsilon(u)\|^2+C(1+\|u-A_\varepsilon(u)\|^{2\beta})+|\Phi_\varepsilon(u)|+C\|u\|\|u-A_\varepsilon(u)\|.
\end{split}
\end{equation}
\par
If there exists $\{u_n\}\subset H^1(\mathbb{R}^3)$ with $|\Phi_\varepsilon(u_n)|\leq c_0$ and $\|\Phi_\varepsilon^\prime(u_n)\|\geq b_0$ such that $\|u_n-A_\varepsilon(u_n)\|\rightarrow 0$ as $n\rightarrow\infty$, then it follows from \eqref{eq3.12} that $\{u_n\}$ is bounded in $H^1(\mathbb{R}^3)$,
and by Lemma \ref{Lem3.4} we see that $\|\Phi_\varepsilon(u_n)\|\rightarrow 0$ as $n\rightarrow\infty$, which is a contradiction. Thus, \eqref{eq3.7} holds and the proof is completed.
\end{proof}
\par
Finally we consider the assumption $(\Gamma)$.
\begin{Lem}\label{Lem3.6}
$\Gamma_j$ is nonempty.
\end{Lem}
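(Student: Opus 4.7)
The goal is to exhibit, for each $j\ge 1$, a compact symmetric set $E_j\subset H^1(\mathbb{R}^3)$ which belongs to $\Gamma_j$. My plan follows the standard symmetric-mountain-pass linking recipe used for sign-changing problems in \cite{Liu-Liu-Wang2016JDE}, taking advantage of the fact that the penalization weight $\chi_\varepsilon$ vanishes on $\Lambda_\varepsilon$.

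\textbf{Step 1: a linking subspace with disjoint supports.} I would fix $j$ mutually disjoint open balls $B_1,\ldots,B_j$ contained in $\Lambda_\varepsilon$ and pick nonnegative $\phi_i\in C_0^\infty(B_i)\setminus\{0\}$ with $\|\phi_i\| = 1$. Let $V_j := \mathrm{span}\{\phi_1,\ldots,\phi_j\}$. Two features of this choice are essential: (i) $\chi_\varepsilon\equiv 0$ on $\bigcup B_i$, so the penalization term $Q_\varepsilon$ vanishes identically on $V_j$; (ii) for $u=\sum t_i\phi_i$ the decomposition $u^+ = \sum_{t_i>0} t_i\phi_i$, $u^- = \sum_{t_i<0}t_i\phi_i$ is orthogonal in $H^1$ and in every $L^q$, and in particular $\|u\|^2 = \sum t_i^2$.

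\textbf{Step 2: negativity of $\Phi_\varepsilon$ on a large sphere.} Combining $(V)$, $(K)$, the bound $\int\phi_u u^2\le C\|u\|^4$ from \eqref{eq2.3}, and the superquadratic lower bound \eqref{eq1.10} on $F$ given by $(f_3)$, and using the equivalence of all norms on the finite-dimensional space $V_j$, I would obtain
\begin{equation*}
\Phi_\varepsilon(u)\le C_1\|u\|^2 + C_2\|u\|^4 - C_3\|u\|^\mu + C_4,\qquad u\in V_j.
\end{equation*}
Since $\mu>4$, there is $R_j=R_j(j,\varepsilon)>0$ so large that $\Phi_\varepsilon(u)<0$ whenever $u\in V_j$ and $\|u\|\ge R_j$. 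Define the candidate set
\begin{equation*}
E_j := \{u\in V_j : \|u\| = R_j\},
\end{equation*}
which is compact, odd-symmetric, homeomorphic to $S^{j-1}$, and therefore satisfies $\gamma(E_j)=j$.

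\textbf{Step 3: reduction to $E_j\cap\Sigma$.} By construction $\Phi_\varepsilon<0$ on $E_j$, so the requirement $\eta(u)=u$ whenever $\Phi_\varepsilon(u)<0$ in the definition of $\Theta$ forces $\eta|_{E_j}=\mathrm{id}$ for every $\eta\in\Theta$. Hence
\begin{equation*}
E_j\cap\eta^{-1}(\Sigma) = E_j\cap\Sigma,
\end{equation*}
and the task reduces to showing $\gamma(E_j\cap\Sigma)\ge j$.

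\textbf{Step 4 (the main obstacle).} The remaining issue is to compute the genus of $E_j\cap\Sigma$ and reconcile the requirement ``$R_j$ large so that $\Phi_\varepsilon<0$ on $E_j$'' with the requirement ``$\mathrm{dist}(u,P^\pm)=\sigma$ small''. Using the orthogonal splitting of Step~1, the map $t=(t_1,\ldots,t_j)\mapsto \sum t_i\phi_i$ identifies $V_j$ with $\mathbb{R}^j$ and carries $E_j$ to the Euclidean sphere $R_j S^{j-1}$. On $V_j$ one checks that $\mathrm{dist}(u,P^-)$ and $\mathrm{dist}(u,P^+)$ are comparable (up to multiplicative constants depending only on the $\phi_i$'s) to $\|u^+\|$ and $\|u^-\|$, respectively; hence, up to rescaling $\sigma$ if necessary, the intersection $E_j\cap\Sigma$ corresponds to an explicit odd-symmetric algebraic subset of $S^{j-1}$. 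A Borsuk-Ulam / genus monotonicity argument as in \cite{Liu-Liu-Wang2016JDE} (applied to the odd continuous map $t\mapsto (\|u^+(t)\|-\|u^-(t)\|)$ together with the sweeping parameter $R$) then yields $\gamma(E_j\cap\Sigma)\ge j$. The most delicate point is this last genus estimate: adjustments to the choices of $R_j$, $\sigma$ and $V_j$ (possibly enlarging $V_j$ slightly to $V_{j+1}$ and restricting to a sub-sphere) may be needed so that the sphere $E_j$ genuinely crosses $\Sigma$ along a set of full genus $j$. Once this is arranged, $E_j\in\Gamma_j$ as required.
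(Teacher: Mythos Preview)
Your Steps 1--2 are fine and match the paper's setup, but Step 3 contains a fatal error that invalidates the rest. Once you choose $R_j$ so large that $\Phi_\varepsilon<0$ on the sphere $E_j$, the intersection $E_j\cap\Sigma$ is \emph{empty}: Lemma~\ref{Lem3.2} (condition $(I_2)$) gives $\Phi_\varepsilon(u)\ge c^*>0$ for every $u\in\Sigma$, so no point of $E_j$ can lie in $\Sigma$. Consequently $\gamma(E_j\cap\eta^{-1}(\Sigma))=\gamma(E_j\cap\Sigma)=\gamma(\emptyset)=0$, and $E_j\notin\Gamma_j$. The hedging in Step~4 (``adjusting $R_j$, $\sigma$, $V_j$'') cannot fix this: the two requirements ``$R_j$ large enough to force $\Phi_\varepsilon<0$'' and ``$E_j$ meets $\Sigma$'' are mutually exclusive by Lemma~\ref{Lem3.2}.

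The paper's proof resolves this by taking as candidate set the image of the \emph{solid} ball, $E=\varphi_n(B_n)$ where $\varphi_n(t)=R\sum_{i=1}^n t_i v_i$ and $B_n$ is the closed unit ball in $\mathbb{R}^n$, not just its boundary sphere. The point is that $\varphi_n(0)=0\in P_\sigma^+\cap P_\sigma^-$ while $\varphi_n(\partial B_n)\cap(P_\sigma^+\cap P_\sigma^-)=\emptyset$ and $\Phi_\varepsilon<0$ on $\varphi_n(\partial B_n)$. For any $\eta\in\Theta$ one then has $\eta\circ\varphi_n$ odd, equal to $\varphi_n$ on $\partial B_n$, and sending $0$ into $P_\sigma^+\cap P_\sigma^-$; a Borsuk--Ulam type argument (Lemma~4.2 in \cite{Liu-Liu-Wang2016JDE}) yields $\gamma\big(\varphi_n(B_n)\cap\eta^{-1}(\Sigma)\big)\ge n-1$. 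Note the loss of one dimension: $n$ basis functions give $E\in\Gamma_{n-1}$, so to show $\Gamma_j\neq\emptyset$ you must start with $n=j+1$ disjoint bumps, not $j$ as you wrote in Step~1.
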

\begin{proof}
For any $n\in \mathbb{N}$, we choose $\{v_i\}_1^n\subset C_0^\infty(\mathbb{R}^3)\setminus \{0\}$ such that $\text{supp}(v_i)\cap \text{supp}(v_j)=\emptyset$ for $i\neq j$. Thus, for $\varepsilon$ small enough, one has
\begin{equation*}
B_0:=\{x\in \mathbb{R}^3|\cup_{i=1}^n\text{supp}(v_i)\}\subset \Lambda_\varepsilon.
\end{equation*}
Denote by $B_n$ the unit ball in $\mathbb{R}^n$, define $\varphi_n\in C(B_n,C_0^\infty(B_0))$ as
\begin{equation*}
\varphi_n(t)=R\sum_{i=1}^nt_iv_i,~t=(t_1,t_2,\ldots,t_n)\in B_n,
\end{equation*}
where $R>0$ is a large number. Obviously, $\varphi_n(0)=0\in P_\sigma^+\cap P_\sigma^-$ and $\varphi_n(-t)=-\varphi_n(t)$ for $t\in B_n$.
\par
Let
\begin{equation*}
\mathcal{J}(u)=\frac{1}{2}\int_{\mathbb{R}^3}|\nabla u|^2dx+\frac{a_2}{2}\int_{\mathbb{R}^3}u^2dx+\frac{1}{4}\int_{\mathbb{R}^3}\phi_uu^2dx-b_1\int_{\mathbb{R}^3}F(u)dx.
\end{equation*}
Thus, it follows from \eqref{eq1.10} and \eqref{eq2.3} that
\begin{equation*}
\begin{split}
\Phi_\varepsilon(\varphi_n(t))&\leq \mathcal{J}(\varphi_n(t))\leq\frac{1}{2}\int_{\mathbb{R}^3}|\nabla \varphi_n(t)|^2dx+(\frac{a_2}{2}+C_1)\int_{\mathbb{R}^3}\varphi_n^2(t)dx
\\
&\quad+C\big(\int_{\mathbb{R}^3}|\varphi_n(t)|^{\frac{12}{5}}dx\big)^{\frac{5}{3}}-C_0\int_{\mathbb{R}^3}|\varphi_n(t)|^\mu dx\\
&\leq CR^2\sum\limits_{i=1}^n\int_{\mathbb{R}^3}t_i^2(|\nabla v_i|^2+v_i^2\big)dx+CR^4\bigg(\sum\limits_{i=1}^n\int_{\mathbb{R}^3}t_i^{\frac{12}{5}}|v_i|^{\frac{12}{5}}dx\bigg)^{\frac{5}{3}}\\
&\quad-C_0R^\mu\sum\limits_{i=1}^n\int_{\mathbb{R}^3}t_i^\mu|v_i|^\mu dx.
\end{split}
\end{equation*}
Since $\mu>4$, one sees that $\Phi_\varepsilon(\varphi_n(t))\rightarrow -\infty$ as $R\rightarrow \infty$ uniformly for $t\in \partial B_n$. Hence, choosing $R$ large enough, we have
\begin{equation*}
\sup_{u\in \varphi_n(\partial B_n)}\Phi_\varepsilon(u)<c^*:=\inf_{u\in \Sigma}\Phi_\varepsilon(u).
\end{equation*}
Moreover, it is not difficult to check that $\varphi_n(\partial B_n)\cap (P_\sigma^+\cap P_\sigma^-)=\emptyset$ for $R>0$ large enough.
\par
Now we let
\begin{equation*}
\Theta=\{\eta|\eta \in C(H^1(\mathbb{R}^3),H^1(\mathbb{R}^3)),\eta~\text{odd},~\eta(P_\sigma^+)\subset P_\sigma^+,\eta(P_\sigma^-)\subset P_\sigma^-,\eta(u)=u~\text{if}~\Phi_\varepsilon(u)<0\},
\end{equation*}
\begin{equation*}
\Gamma_j=\{E|E\subset H^1(\mathbb{R}^3),E~\text{compact},-E=E,\gamma(E\cap \eta^{-1}(\Sigma))\geq j~\text{for}~\eta\in \Theta\},
\end{equation*}
then it follows from \cite[Lemma 4.2]{Liu-Liu-Wang2016JDE} that $\varphi_n(B_n)\subset \Gamma_{n-1}$. This completes this proof.
\end{proof}
\par
Having verified all the assumptions of Theorem \ref{Thm3.1}, we have the following existence theorem.
\begin{Thm}\label{Thm3.2}
Under the assumptions of Theorem \ref{Thm1.1}, the functional $\Phi_\varepsilon$ has infinitely many sign-changing critical points for $\varepsilon>0$ small,
\begin{equation*}
\{\pm u_{j,\varepsilon}|j=1,2,\cdots\}
\end{equation*}
and the corresponding critical values are defined as
\begin{equation*}
c_j^\varepsilon=\inf_{A\in \Gamma_j}\sup_{u\in A\setminus O}\Phi_\varepsilon(u),~j=1,2,\cdots.
\end{equation*}
Moreover,
\begin{itemize}
\item[$1.$] there exists $\tilde{c}_j,j=1,2,\cdots$, independent of $\varepsilon$ such that
\begin{equation}\label{eq3.13}
c_j^\varepsilon\leq \tilde{c}_j,~j=1,2,\cdots.
\end{equation}
\item[$2.$] If $c_j^\varepsilon=c_{j+\varepsilon}^\varepsilon=\cdots=c_{j+k}^\varepsilon=c$, then $\gamma(K_c^*)\geq k+1$.
\end{itemize}
\end{Thm}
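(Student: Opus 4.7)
The plan is to apply the abstract Theorem \ref{Thm3.1} directly, taking $X = H^1(\mathbb{R}^3)$, $J = \Phi_\varepsilon$, $P = P_\sigma^+$, $Q = P_\sigma^-$ (so that $O = P_\sigma^+ \cup P_\sigma^-$ and $\Sigma = \partial P_\sigma^+ \cap \partial P_\sigma^-$), and $A = A_\varepsilon$ with $\sigma \in (0,\sigma_0)$ as in Lemma \ref{Lem3.3}. The functional $\Phi_\varepsilon$ is even by $(f_4)$, and $A_\varepsilon$ is locally Lipschitz and odd by Lemma \ref{Lem3.1}. The five structural assumptions of Theorem \ref{Thm3.1} have all been verified in advance: $(I_1)$ is Lemma \ref{Lem2.2}, $(I_2)$ is Lemma \ref{Lem3.2}, $(A_1)$ is Lemma \ref{Lem3.5}, $(A_2)$ is Lemma \ref{Lem3.3}, and $(\Gamma)$ is Lemma \ref{Lem3.6}. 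Invoking Theorem \ref{Thm3.1} then yields a sequence of critical values $c_j^\varepsilon \geq c_\varepsilon^* > 0$ with $K_{c_j^\varepsilon}^* \neq \emptyset$; since $K_{c_j^\varepsilon}^* \subset H^1(\mathbb{R}^3) \setminus O$ and any sign-definite function belongs to $O$, each element of $K_{c_j^\varepsilon}^*$ is sign-changing. The evenness of $\Phi_\varepsilon$ furnishes the pairs $\pm u_{j,\varepsilon}$, and Theorem \ref{Thm3.1}(2) gives $c_j^\varepsilon \to \infty$. The multiplicity conclusion (2.) of the statement is literally Theorem \ref{Thm3.1}(3).

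The only substantive piece not covered by a direct citation is the $\varepsilon$-uniform upper bound (\ref{eq3.13}). The idea is to recycle the genus-generating maps $\varphi_n$ already constructed in Lemma \ref{Lem3.6}: they are built from fixed functions $v_1,\dots,v_n \in C_0^\infty(\mathbb{R}^3)$ with pairwise disjoint supports whose union $B_0$ is a compact set independent of $\varepsilon$. For $\varepsilon$ small enough we have $B_0 \subset \Lambda_\varepsilon$, hence $\chi_\varepsilon \equiv 0$ on $\mathrm{supp}(\varphi_n(t))$ and consequently $Q_\varepsilon(\varphi_n(t)) = 0$ for all $t \in B_n$. Using $(V)$, $(K)$ and \eqref{eq2.3}, we can dominate
\begin{equation*}
\Phi_\varepsilon(\varphi_n(t)) \leq \mathcal{J}(\varphi_n(t)), \qquad t \in B_n,
\end{equation*}
by the $\varepsilon$-free functional $\mathcal{J}$ introduced in Lemma \ref{Lem3.6}. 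Since $\varphi_{j+1}(B_{j+1}) \in \Gamma_j$ (the genus computation from Lemma \ref{Lem3.6}) and is an $\varepsilon$-independent compact set, setting
\begin{equation*}
\tilde c_j := \sup_{t \in B_{j+1}} \mathcal{J}(\varphi_{j+1}(t)) < \infty
\end{equation*}
yields $c_j^\varepsilon \leq \sup_{u \in \varphi_{j+1}(B_{j+1}) \setminus O} \Phi_\varepsilon(u) \leq \tilde c_j$, proving \eqref{eq3.13}.

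The main obstacle, and the reason Theorem \ref{Thm3.2} is more than just a restatement of Theorem \ref{Thm3.1}, is ensuring that the bound $\tilde c_j$ does not drift with $\varepsilon$. This is precisely where the penalization term earns its keep: it has been designed so that it is dormant on compactly supported test functions living inside $\Lambda_\varepsilon$, which allows the minimax levels to be compared with those of an $\varepsilon$-independent functional. Everything else in the proof is bookkeeping: verifying that $\varphi_n(t)$ genuinely lies outside $O$ for $R$ large (already noted in Lemma \ref{Lem3.6}), that the sup over the finite-dimensional compact $\varphi_{j+1}(B_{j+1})$ is finite, and that $\mathcal{J}$ dominates $\Phi_\varepsilon$ once the penalization vanishes. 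Having assembled these pieces, the existence of the sequence $\{\pm u_{j,\varepsilon}\}$, its sign-changing nature, the uniform upper bound \eqref{eq3.13}, and the multiplicity statement all follow.
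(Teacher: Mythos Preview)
Your proposal is correct and follows essentially the same approach as the paper: after invoking Theorem \ref{Thm3.1} via the assumptions verified in Lemmas \ref{Lem2.2}, \ref{Lem3.2}--\ref{Lem3.6}, the paper proves \eqref{eq3.13} exactly as you do, by taking $H_j=\varphi_{j+1}(B_{j+1})\in\Gamma_j$, noting that $Q_\varepsilon$ vanishes on $H_j$ for $\varepsilon$ small, bounding $\Phi_\varepsilon\leq\mathcal{J}$ on $H_j$, and setting $\tilde c_j=\sup_{H_j}\mathcal{J}$. Your write-up is simply more explicit about why the critical points are sign-changing and why the penalization is dormant on the test sets.
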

\begin{proof}
It remains to verify \eqref{eq3.13}. Let $H_j:=\varphi_{j+1}(B_{j+1})\subset \Gamma_j$. For $t\in B_{j+1},u=\varphi_{j+1}(t)$, then $\big(\int_{\mathbb{R}^3}\chi_\varepsilon(x)u^2dx-1\big)_+^\beta=0$ and for $\varepsilon$ small enough
\begin{equation*}
\Phi_\varepsilon(u)\leq \mathcal{J}(u),~\forall u\in \varphi_{j+1}(B_{j+1}).
\end{equation*}
Hence,
\begin{equation*}
c_j^\varepsilon\leq \tilde{c}_j:=\sup_{H_j}\mathcal{J}(u).
\end{equation*}
This completes this proof.
\end{proof}

\section{Localization of nodal solutions and the proof of Theorem 1.1}
In this section, we are going to prove that the sign-changing critical points obtained in
Theorem \ref{Thm3.2} are solutions of the original system \eqref{eq2.1}.
\par
For any $k\in \mathbb{N}$, by Theorem \ref{Thm3.2}, there exists $\varepsilon_k^\prime>0$ such that, for any $\varepsilon\in (0,\varepsilon_k^\prime)$, the functional $\Phi_\varepsilon$ has at least $k$ pairs sign-changing critical points $\pm u_{j,\varepsilon}, j=1,\cdots,k$ and the corresponding critical values satisfy
\begin{equation*}
0<c_1^\varepsilon\leq c_2^\varepsilon\leq\cdots \leq c_k^\varepsilon\leq \tilde{c}_k.
\end{equation*}
Moreover, we have the following Lemma.
\begin{Lem}\label{Lem4.1}
There exist a positive constant $\rho$ depending
only on $a$ and $p$ and a positive constant $\eta_k$ independent of $\varepsilon$ such that
\begin{equation*}
\rho\leq \|u_{j,\varepsilon}\|\leq \eta_k~~\text{and}~~Q_\varepsilon(u_{j,\varepsilon})\leq \eta_k,~1\leq j \leq k.
\end{equation*}
\end{Lem}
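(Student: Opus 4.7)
The plan is to establish the upper and lower bounds separately, both using only results already available in Sections~2 and 3.

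\textbf{Upper bound.} Since $u_{j,\varepsilon}$ is a critical point of $\Phi_\varepsilon$ with $\Phi_\varepsilon(u_{j,\varepsilon})=c_j^\varepsilon\leq \tilde c_k$ by Theorem \ref{Thm3.2}, my approach is to rerun the boundedness computation from the $(PS)$-argument of Lemma \ref{Lem2.2}, but with $u_{j,\varepsilon}$ in place of an abstract $(PS)$-sequence. Writing $c_j^\varepsilon = \Phi_\varepsilon(u_{j,\varepsilon}) - \frac{1}{\mu}\langle \Phi_\varepsilon'(u_{j,\varepsilon}),u_{j,\varepsilon}\rangle$ (the second term vanishing), applying $(f_3)$ and the nonnegativity of $\phi_{u_{j,\varepsilon}}u_{j,\varepsilon}^2$, and using the trivial inequality $\int\chi_\varepsilon u^2\leq 1+(\int\chi_\varepsilon u^2-1)_+$ exactly as in Lemma \ref{Lem2.2}, I arrive at an estimate of the shape
\begin{equation*}
\tilde c_k\geq c_j^\varepsilon\geq \tfrac{\min\{1,a_1\}(\mu-2)}{2\mu}\|u_{j,\varepsilon}\|^2 + \tfrac{\mu-2\beta}{\mu}\,Q_\varepsilon(u_{j,\varepsilon}) - \tfrac{2\beta}{\mu}\,Q_\varepsilon(u_{j,\varepsilon})^{(\beta-1)/\beta}.
\end{equation*}
Because $\beta<\mu/2$, the coefficient of $Q_\varepsilon$ is strictly positive, so the Young-type inequality $t^{(\beta-1)/\beta}\leq \epsilon t + C_\epsilon$ absorbs the remainder into it, yielding a bound of the form $\|u_{j,\varepsilon}\|+Q_\varepsilon(u_{j,\varepsilon})\leq \eta_k$ with $\eta_k$ depending only on $\tilde c_k$ (hence on $k$) and on the structural constants $a_1,\mu,\beta$.

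\textbf{Lower bound.} Since $u_{j,\varepsilon}$ is sign-changing, both $u_{j,\varepsilon}^\pm$ are nontrivial, and $u_{j,\varepsilon}$ solves the modified system \eqref{eq2.5}. Testing \eqref{eq2.5} with $u_{j,\varepsilon}^+$ and discarding the three nonnegative contributions $\int\phi_{u_{j,\varepsilon}}(u_{j,\varepsilon}^+)^2$ and $2\beta\kappa(u_{j,\varepsilon})\int\chi_\varepsilon(u_{j,\varepsilon}^+)^2$ on the left-hand side produces
\begin{equation*}
\min\{1,a_1\}\,\|u_{j,\varepsilon}^+\|^2 \leq \int_{\mathbb{R}^3} K(\varepsilon x)f(u_{j,\varepsilon}^+)u_{j,\varepsilon}^+\,dx \leq b_2\delta\,\|u_{j,\varepsilon}^+\|^2 + C\|u_{j,\varepsilon}^+\|^p
\end{equation*}
after invoking \eqref{eq1.9} and the continuous embedding $H^1(\mathbb{R}^3)\hookrightarrow L^p(\mathbb{R}^3)$ (valid since $4<p<6$). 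Choosing $\delta$ small, independent of $\varepsilon$ and $j$, forces $\|u_{j,\varepsilon}^+\|\geq \rho$ for some $\rho$ depending only on $a_1$ and $p$ (together with the constants in $(f_1)$, $(f_2)$). Repeating the argument with $u_{j,\varepsilon}^-$ and then using the orthogonal decomposition $\|u_{j,\varepsilon}\|^2=\|u_{j,\varepsilon}^+\|^2+\|u_{j,\varepsilon}^-\|^2$ gives $\|u_{j,\varepsilon}\|\geq \rho$ (possibly after relabeling the constant).

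\textbf{Main obstacle.} The only mildly delicate step is the absorption in the upper bound, where the $Q_\varepsilon^{(\beta-1)/\beta}$ remainder must be dominated by $Q_\varepsilon$ itself via the strict inequality $\beta<\mu/2$; this is precisely the mechanism already used in the proof of Lemma \ref{Lem2.2}, so no new idea is required. The remainder of the argument is essentially bookkeeping to verify that $\eta_k$ depends only on $k$ and that $\rho$ depends only on $a_1$ and $p$, as asserted.
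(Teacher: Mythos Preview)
Your proof is correct and follows essentially the same approach as the paper; the upper-bound computation is identical to the one in the paper's proof. For the lower bound the paper takes a marginally simpler path by testing \eqref{eq2.5} against $u_{j,\varepsilon}$ itself (rather than $u_{j,\varepsilon}^\pm$), using only that $u_{j,\varepsilon}\neq 0$ instead of the sign-changing structure, but your variant is equally valid.
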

\begin{proof}
By
\begin{equation*}
\begin{split}
\tilde{c}_k&\geq c_j^\varepsilon=\Phi_\varepsilon(u_{j,\varepsilon})-\frac{1}{\mu}\langle \Phi_\varepsilon^\prime(u_{j,\varepsilon}),u_{j,\varepsilon}\rangle\\
&=\frac{\mu-2}{2\mu}\bigg(\int_{\mathbb{R}^3}|\nabla u_{j,\varepsilon}|^2dx+\int_{\mathbb{R}^3}V(\varepsilon x)u_{j,\varepsilon}^2dx\bigg)+\frac{\mu-4}{4\mu}\int_{\mathbb{R}^3}\phi_{u_{j,\varepsilon}}u_{j,\varepsilon}^2dx+\big(\int_{\mathbb{R}^3}\chi_\varepsilon(x)u_{j,\varepsilon}^2dx-1\big)_+^\beta\\
&\quad-\frac{2\beta}{\mu}\bigg(\int_{\mathbb{R}^3}\chi_\varepsilon(x)u_{j,\varepsilon}^2dx-1\bigg)_+^{\beta-1}
\int_{\mathbb{R}^3}\chi_\varepsilon(x)u_{j,\varepsilon}^2dx+\int_{\mathbb{R}^3}K(\varepsilon x)[\frac{1}{\mu}f(u_{j,\varepsilon})u_{j,\varepsilon}-F(u_{j,\varepsilon})]dx\\
&\geq \frac{\min\{1,a_1\}(\mu-2)}{2\mu}\|u_{j,\varepsilon}\|^2+\big(\int_{\mathbb{R}^3}\chi_\varepsilon(x)u_{j,\varepsilon}^2dx-1\big)_+^\beta-\frac{2\beta}{\mu}\bigg(\int_{\mathbb{R}^3}\chi_\varepsilon(x)u_{j,\varepsilon}^2dx-1\bigg)_+^{\beta-1}
\int_{\mathbb{R}^3}\chi_\varepsilon(x)u_{j,\varepsilon}^2dx\\
&\geq \frac{\min\{1,a_1\}(\mu-2)}{2\mu}\|u_{j,\varepsilon}\|^2+\frac{\mu-2\beta}{\mu}\big(\int_{\mathbb{R}^3}\chi_\varepsilon(x)u_{j,\varepsilon}^2dx-1\big)_+^\beta
-\frac{2\beta}{\mu}\bigg(\int_{\mathbb{R}^3}\chi_\varepsilon(x)u_{j,\varepsilon}^2dx-1\bigg)_+^{\beta-1}
\end{split}
\end{equation*}
and $2<\beta<\frac{\mu}{2}$, we get that there exists $\eta_k>0$ independent of $\varepsilon$ such that $\|u_{j,\varepsilon}\|\leq \eta_k$ and $Q_\epsilon(u_{j,\epsilon})\leq \eta_k$.
\par
From $\langle \Phi_\varepsilon^\prime(u_{j,\epsilon}),u_{j,\varepsilon}\rangle=0$ and \eqref{eq1.9}, we get that
\begin{equation*}
\begin{split}
\min\{1,a_1\}\|u_{j,\varepsilon}\|^2&\leq\int_{\mathbb{R}^3}|\nabla u_{j,\varepsilon}|^2dx+\int_{\mathbb{R}^3}V(\varepsilon x)u_{j,\varepsilon}^2dx+\int_{\mathbb{R}^3}\phi_{u_{j,\varepsilon}}u_{j,\varepsilon}^2dx\\
&\quad+2\beta\big(\int_{\mathbb{R}^3}\chi_\varepsilon(x)u_{j,\varepsilon}^2dx-1\big)_+^{\beta-1}\int_{\mathbb{R}^3}\chi_\varepsilon(x)u_{j,\varepsilon}^2dx\\
&=\int_{\mathbb{R}^3}K(\varepsilon x)f(u_{j,\varepsilon})u_{j,\varepsilon}dx\\
&\leq \frac{\min\{1,a_1\}}{2}\|u_{j,\varepsilon}\|^2+C\|u_{j,\varepsilon}\|^p,
\end{split}
\end{equation*}
which implies that
\begin{equation*}
\frac{\min\{1,a_1\}}{2}\|u_{j,\varepsilon}\|^2\leq C\|u_{j,\varepsilon}\|^p.
\end{equation*}
Since $p>2$ and $u_{j,\epsilon}\neq 0$, we deduce that there exists $\rho>0$ depending only on $a_1$ and $p$ such that $\|u_{j,\varepsilon}\|\geq \rho,1\leq j \leq k$.
\end{proof}

\begin{Lem}\label{Lem4.2}
Assume $\Phi_\varepsilon^\prime(u)=0,\Phi_\varepsilon(u)\leq L$. Then there exists $c=c(L)$ such that $|u(x)|\leq c$ for $x\in \mathbb{R}^3$. Moreover, for any $\delta>0$ there exists $c=c(\delta, L)$ such that $|u(x)|\leq c\varepsilon^3$ for $x\in \mathbb{R}^3\setminus (\Lambda_\varepsilon)^\delta$.
\end{Lem}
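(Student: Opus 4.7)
\noindent\textbf{Proof plan for Lemma \ref{Lem4.2}.}

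The plan is a two-stage Moser (De Giorgi) iteration: first a global $L^\infty$ bound, then a local $L^\infty$-to-$L^2$ estimate combined with the integral control provided by the penalization term. Before either step, I would reproduce the computation of Lemma \ref{Lem4.1}, namely evaluate $\Phi_\varepsilon(u)-\frac{1}{\mu}\langle\Phi_\varepsilon'(u),u\rangle$ using $(f_3)$, $(V)$ and the fact $2<\beta<\mu/2$, to conclude that $\|u\|\leq C(L)$ and $\int_{\mathbb{R}^3}\chi_\varepsilon(x)u^2\,dx\leq C(L)$. In particular, applying Hardy--Littlewood--Sobolev with the splitting $|x-y|^{-1}\chi_{B_1}+|x-y|^{-1}\chi_{B_1^c}$ gives $\|\phi_u\|_\infty\leq C\|u\|^2\leq C(L)$.

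For the global bound I would test the equation
\begin{equation*}
-\Delta u+\bigl[V(\varepsilon x)+\phi_u(x)+2\beta\kappa(u)\chi_\varepsilon(x)\bigr]u=K(\varepsilon x)f(u)
\end{equation*}
with $|u|^{2s-2}u$ (truncating $|u|$ at height $M$ if needed and letting $M\to\infty$ afterwards). The key observation is that the three bracketed coefficients are \emph{nonnegative}, so on the left one keeps the Dirichlet term plus a nonnegative contribution; discarding the latter and using $(f_2)$ together with Sobolev's inequality $\|w\|_6^2\leq C\|\nabla w\|_2^2$ applied to $w=|u|^s$, one obtains the standard recursion
\begin{equation*}
\|u\|_{6s}^{2s}\leq Cs\bigl(\|u\|_{2s}^{2s}+\|u\|_{2s+p-2}^{2s+p-2}\bigr),
\end{equation*}
which, because $p<2^\ast=6$, bootstraps from $\|u\|_6\leq C\|u\|\leq C(L)$ to $\|u\|_\infty\leq c(L)$. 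The nonlocal term $\phi_u u$ and the penalization term create no difficulty precisely because they have the ``good sign'' and are simply dropped.

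For the decay statement I would exploit the integral bound $\int\chi_\varepsilon u^2\leq C(L)$. Fix $x_0$ with $\mathrm{dist}(x_0,\Lambda_\varepsilon)>\delta$ and set $r=\min(\delta/2,1)$. For every $y\in B_r(x_0)$ one has $\mathrm{dist}(y,\Lambda_\varepsilon)\geq r$, so by the definition of $\chi_\varepsilon$ and monotonicity of $\varsigma$, $\chi_\varepsilon(y)\geq \varepsilon^{-6}\varsigma(r)=:c_\delta\varepsilon^{-6}$. Consequently
\begin{equation*}
\int_{B_r(x_0)}u^2\,dx\leq \frac{\varepsilon^6}{c_\delta}\int_{\mathbb{R}^3}\chi_\varepsilon(x)u^2\,dx\leq c(\delta,L)\,\varepsilon^6.
\end{equation*}
To pass from this $L^2$ bound to a pointwise bound I would run a \emph{localized} Moser iteration: test with $\zeta^2|u|^{2s-2}u$ where $\zeta$ is a standard cut-off supported in $B_r(x_0)$ and equal to $1$ on $B_{r/2}(x_0)$. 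Once again one discards the nonnegative terms involving $\phi_u$ and $\chi_\varepsilon$, absorbs the commutator terms $|\nabla\zeta|^2|u|^{2s}$, and uses $|f(u)|\leq C(\|u\|_\infty)|u|$ (Step~1 makes $f(u)/u$ bounded by a constant depending only on $L$). The resulting Caccioppoli-type inequality iterates to
\begin{equation*}
\|u\|_{L^\infty(B_{r/2}(x_0))}\leq C(r,L)\,\|u\|_{L^2(B_r(x_0))}\leq c(\delta,L)\,\varepsilon^3,
\end{equation*}
and evaluating at $x_0$ finishes the proof after shrinking $r$ (or iterating on nested balls) if needed to reach any prescribed $\delta>0$.

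The main obstacle I anticipate is exactly the $\varepsilon$-uniformity in the second step: the penalization coefficient $2\beta\kappa(u)\chi_\varepsilon$ blows up like $\varepsilon^{-6}$ outside $\Lambda_\varepsilon$, so one cannot treat it as part of a bounded lower-order perturbation. The way around it is to use its sign rather than its size---keeping it on the left throughout the iteration---so that the constants entering Moser depend only on the \emph{bounded} quantities $\|V\|_\infty$, $\|K\|_\infty$, $\|\phi_u\|_\infty$ and $\|u\|_\infty$, all of which are controlled by $L$ via Step~1 and the Hardy--Littlewood--Sobolev bound on $\phi_u$.
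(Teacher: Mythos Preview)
Your plan is correct and follows essentially the same route as the paper: a global Moser iteration after discarding the nonnegative $\phi_u$ and penalization contributions, then a local $L^\infty$--$L^2$ Moser estimate combined with the bound $\int_{\mathbb{R}^3}\chi_\varepsilon u^2\le C(L)$ coming from the a priori energy computation. The only cosmetic differences are that the paper organizes the first iteration via a H\"older splitting $|u|^{2k+p-2}=|u|^{p-2}\,(u|u_T|^{k-1})^2$ (leading to a single-ratio recursion with $\chi=(2^\ast-p+2)/2$) rather than your two-term recursion, and that the $\|\phi_u\|_\infty$ bound you derive is not actually needed at this stage---the sign of $\phi_u$ suffices throughout---though it does no harm.
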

\begin{proof}
The proof is an application of Moser's iteration.\\
$(1)$ Define
\begin{equation*}
u_T(x)=
\begin{cases}
-T,&\text{if}~u(x)\leq -T,\\
u(x),&\text{if}~|u(x)|\leq T,\\
T,&\text{if}~u(x)\geq T,
\end{cases}
\end{equation*}
where $T>0$. Set $\varphi=|u_T|^{2k-2}u$ with $k\geq 1$ as test function in $\langle \Phi_\varepsilon^\prime(u),\varphi\rangle=0$, that is
\begin{equation}\label{eq4.1}
\int_{\mathbb{R}^3}\nabla u \nabla \varphi dx+\int_{\mathbb{R}^3}V(\varepsilon x) u\varphi dx+\int_{\mathbb{R}^3}\phi_u u  \varphi dx
+2\beta \bigg(\int_{\mathbb{R}^3}\chi_\varepsilon(x)u^2dx-1\bigg)_+^{\beta-1}\int_{\mathbb{R}^3}\chi_\varepsilon(x) u  \varphi dx=\int_{\mathbb{R}^3}K(\varepsilon x)f(u)\varphi dx.
\end{equation}
Note that
\begin{equation*}
\int_{\mathbb{R}^3}\phi_u u  \varphi dx
+2\beta \bigg(\int_{\mathbb{R}^3}\chi_\varepsilon(x)u^2dx-1\bigg)_+^{\beta-1}\int_{\mathbb{R}^3}\chi_\varepsilon(x)u\varphi dx\geq 0.
\end{equation*}
Thus, by \eqref{eq4.1} and \eqref{eq1.9}, one has
\begin{equation}\label{eq4.2}
\int_{\mathbb{R}^3}|u_T|^{2k-2}| \nabla u|^2dx+2(k-1)\int_{\{|u(x)|\leq T\}}|u|^{2k-2}|\nabla u|^2dx\leq C\int_{\mathbb{R}^3}|u_T|^{2k-2}|u|^pdx.
\end{equation}
Moreover, by H\"older inequality, we obtain
\begin{equation}\label{eq4.3}
\int_{\mathbb{R}^3}|u_T|^{2k-2}|u|^pdx=\int_{\mathbb{R}^3}(u|u_T|^{k-1})^2|u|^{p-2}dx\leq\big(\int_{\mathbb{R}^3}|u|^{2^*}dx\big)^{\frac{p-2}{2^*}} \big(\int_{\mathbb{R}^3}(u|u_T|^{k-1})^{\frac{2\cdot 2^*}{2^*-p+2}}dx\big)^{\frac{2^*-p+2}{2^*}}.
\end{equation}
A direct estimation and \eqref{eq4.2}-\eqref{eq4.3} imply that
\begin{equation}\label{eq4.4}
\begin{split}
\int_{\mathbb{R}^3}|\nabla (u|u_T|^{k-1})|^2dx&=\int_{\mathbb{R}^3}|\nabla u|^2|u_T|^{2k-2}dx+
(k^2-1)\int_{\{|u(x)|\leq T\}}|u|^{2k-2}|\nabla u|^2dx\\
&\leq \frac{k+1}{2}\int_{\mathbb{R}^3}|\nabla u|^2|u_T|^{2k-2}dx+
(k^2-1)\int_{\{|u(x)|\leq T\}}|u|^{2k-2}|\nabla u|^2dx\\
&=\frac{k+1}{2}\bigg(\int_{\mathbb{R}^3}|\nabla u|^2|u_T|^{2k-2}dx+
2(k-1)\int_{\{|u(x)|\leq T\}}|u|^{2k-2}|\nabla u|^2dx\bigg)\\
&\leq Ck\int_{\mathbb{R}^3}|u_T|^{2k-2}|u|^pdx\\
&\leq Ck\big(\int_{\mathbb{R}^3}(u|u_T|^{k-1})^{2^*\cdot\frac{2\cdot}{2^*-p+2}}dx\big)^{\frac{2^*-p+2}{2^*}}.
\end{split}
\end{equation}
Thus, by Sobolev inequality and \eqref{eq4.4}, we have
\begin{equation}\label{eq4.5}
\big(\int_{\mathbb{R}^3}(u|u_T|^{k-1})^{2^*}dx\big)^{\frac{2}{2^*}}\leq Ck\big(\int_{\mathbb{R}^3}(u|u_T|^{k-1})^{\frac{2\cdot 2^* }{2^*-p+2}}dx\big)^{\frac{2^*-p+2}{2^*}}.
\end{equation}
Assume $\int_{\mathbb{R}^3}|u|^{k\cdot{\frac{2\cdot 2^*}{2^*-p+2}}}dx<\infty$. Let $T\rightarrow \infty$ in \eqref{eq4.5}, we obtain
\begin{equation*}
\big(\int_{\mathbb{R}^3}|u|^{k\cdot2^*}dx\big)^{\frac{2}{2^*}}\leq Ck\big(\int_{\mathbb{R}^3}|u|^{k\cdot \frac{2\cdot 2^* }{2^*-p+2}}dx\big)^{\frac{2^*-p+2}{2^*}}.
\end{equation*}
Denote $\chi=\frac{2^*-p+2}{2}>1$. Starting from $k_1=\chi$, then
\begin{equation*}
\big(\int_{\mathbb{R}^3}|u|^{\chi\cdot2^*}dx\big)^{\frac{1}{\chi2^*}}\leq C\chi^{\frac{1}{\chi}}\big(\int_{\mathbb{R}^3}|u|^{2^*}dx\big)^{\frac{1}{2^*}}.
\end{equation*}
By iteration, we have
\begin{equation*}
\|u\|_{\chi^n\cdot 2^*}\leq C(\chi^n)^{\frac{1}{\chi^n}}\|u\|_{2^*},n=1,2,\cdots.
\end{equation*}
Hence,
\begin{equation*}
\|u\|_\infty\leq C\|u\|_{2^*}\leq C.
\end{equation*}
$(2)$ For $x_0\in \mathbb{R}^3,0<\rho<R\leq 1$. Choose $\eta\in C_0^\infty(\mathbb{R}^3,[0,1])$ such that $\eta(x)=1$ for $x\in B_\rho=B_\rho(x_0)$; $\eta(x)=0$ for $x\notin B_R(x_0)$ and $|\nabla \eta|\leq \frac{c}{R-\rho}$. Set $\varphi=u|u|^{2k-2}\eta^2,k\geq 1$ as test function in $\langle \Phi_\varepsilon^\prime(u),\varphi \rangle=0$ and similar to \eqref{eq4.2}, we have
\begin{equation}\label{eq4.6}
(2k-1)\int_{\mathbb{R}^3}|u|^{2k-2}|\nabla u|^2\eta^2dx+2\int_{\mathbb{R}^3}u|u|^{2k-2}\eta \nabla u \nabla \eta dx\leq
C\int_{\mathbb{R}^3}|u|^{2k+p-2}\eta^2dx.
\end{equation}
Note that, by the $L^\infty$-estimate of $u$,
\begin{equation}\label{eq4.7}
\int_{\mathbb{R}^3}|u|^{2k+p-2}\eta^2dx\leq C\int_{B_R(x_0)}|u|^{2k}dx.
\end{equation}
Thus, by \eqref{eq4.6} and \eqref{eq4.7}
\begin{equation*}
\begin{split}
\int_{\mathbb{R}^3}|\nabla (|u|^k\eta)|^2dx&=\int_{\mathbb{R}^3}|\nabla \eta|^2|u|^{2k}dx+
k^2\int_{\mathbb{R}^3}|u|^{2k-2}|\nabla u|^2\eta^2dx+2k\int_{\mathbb{R}^3}|u|^{2k-2}u \eta \nabla u \nabla \eta dx\\
&\leq \int_{\mathbb{R}^3}|\nabla \eta|^2|u|^{2k}dx+
k\bigg((2k-1)\int_{\mathbb{R}^3}|u|^{2k-2}|\nabla u|^2\eta^2dx+2\int_{\mathbb{R}^3}|u|^{2k-2}u \eta \nabla u \nabla \eta dx\bigg)\\
&\leq \int_{\mathbb{R}^3}|\nabla \eta|^2|u|^{2k}dx+
Ck\int_{\mathbb{R}^3}|u|^{2k+p-2}\eta^2dx\\
&\leq \frac{C}{(R-\rho)^2}\int_{B_R(x_0)}|u|^{2k}dx+Ck\int_{B_R(x_0)}|u|^{2k}dx\\
&\leq \frac{Ck}{(R-\rho)^2}\int_{B_R(x_0)}|u|^{2k}dx,
\end{split}
\end{equation*}
which implies that
\begin{equation*}
\big(\int_{B_\rho(x_0)}|u|^{k\cdot 2^*}dx\big)^{\frac{2}{2^*}}\leq \frac{Ck}{(R-\rho)^2}\int_{B_R(x_0)}|u|^{2k}dx.
\end{equation*}
By iteration we have
\begin{equation*}
\|u\|_{L^\infty(B_{\frac{R}{2}}(x_0))}\leq \|u\|_{L^2(B_R(x_0))}.
\end{equation*}
We claim that
\begin{equation}\label{eq4.8}
\int_{\mathbb{R}^3\setminus (\Lambda_\varepsilon)^\delta}u^2dx\leq C_\delta \varepsilon^6.
\end{equation}
In fact, $\int_{\mathbb{R}^3}\chi_\varepsilon u^2dx
\leq (\int_{\mathbb{R}^3}\chi_\varepsilon u^2dx-1)_++1\leq C$. By the definition, if $\delta\geq 1$
\begin{equation*}
\int_{\mathbb{R}^3\setminus (\Lambda_\varepsilon)^\delta}u^2dx=\varepsilon^6\int_{\mathbb{R}^3\setminus (\Lambda_\varepsilon)^\delta}\chi_\varepsilon(x) u^2dx\leq
\varepsilon^6\int_{\mathbb{R}^3}\chi_\varepsilon(x) u^2dx\leq C\varepsilon^6.
\end{equation*}
If $0<\delta<1$, then
\begin{equation*}
\begin{split}
\int_{\mathbb{R}^3}\chi_\varepsilon(x) u^2dx&\geq\int_{\mathbb{R}^3\setminus(\Lambda_\varepsilon)^\delta}\chi_\varepsilon(x) u^2dx\\
&=\varepsilon^{-6}\int_{\mathbb{R}^3\setminus(\Lambda_\varepsilon)^1} u^2dx+\varepsilon^{-6}\int_{(\Lambda_\varepsilon)^1\setminus(\Lambda_\varepsilon)^\delta}\zeta(\text{dist}(x,\Lambda_\varepsilon))u^2dx\\
&\geq\varepsilon^{-6}\bigg(\int_{\mathbb{R}^3\setminus(\Lambda_\varepsilon)^1} u^2dx+\min_{\tau\in[\delta,1]}\zeta(\tau)
\int_{(\Lambda_\varepsilon)^1\setminus(\Lambda_\varepsilon)^\delta}u^2dx\bigg)\\
&\geq \min\{1,\min_{\tau\in[\delta,1]}\zeta(\tau)\}\varepsilon^{-6}\int_{\mathbb{R}^3\setminus(\Lambda_\varepsilon)^\delta} u^2dx
\end{split}
\end{equation*}
where
\begin{equation*}
\min_{\tau\in[\delta,1]}\zeta(\tau)>0.
\end{equation*}
Thus,
\begin{equation*}
\int_{\mathbb{R}^3\setminus (\Lambda_\varepsilon)^\delta}u^2dx=\varepsilon^6\int_{\mathbb{R}^3\setminus (\Lambda_\varepsilon)^\delta}\chi_\varepsilon(x) u^2dx\leq
\varepsilon^6\int_{\mathbb{R}^3}\chi_\varepsilon(x) u^2dx\leq C\varepsilon^6.
\end{equation*}
So, claim \eqref{eq4.8} holds and then
\begin{equation*}
|u(x)|\leq C_\delta \varepsilon^6~\text{for}~x\in \mathbb{R}^3\setminus (\Lambda_\varepsilon)^\delta.
\end{equation*}
This completes this proof.
\end{proof}

For fixed any $1\leq j\leq k$ and let $\varepsilon_n\rightarrow 0^+$ as $n\rightarrow\infty$, by Lemma \ref{Lem4.1}, $\{u_{j,\varepsilon_n}\}$ is bounded in $H^1(\mathbb{R}^3)$. So, we can use the following profile decomposition result introduced in \cite{Tintarev-Fieseler2007book}.
\begin{Lem}\label{Lem4.3}
For fixed any $1\leq j\leq k$ and let $\varepsilon_n\rightarrow 0^+$ as $n\rightarrow\infty$, there exist $U_j^i,~r_{j,\varepsilon_n}\in H^1(\mathbb{R}^3), y_{j,\varepsilon_n}^i\in \mathbb{R}^3$ such that
\begin{equation}\label{eq4.9}
u_{j,\varepsilon_n}=\sum_{i}U_j^i(\cdot-y_{j,\varepsilon_n}^i)+r_{j,\varepsilon_n}
\end{equation}
and satisfy
\begin{itemize}
\item[$(1)$] $u_{j,\varepsilon_n}(\cdot+y_{j,\varepsilon_n}^i)\rightharpoonup U_j^i$ in $H^1(\mathbb{R}^3)$ as $n\rightarrow \infty$.
\item[$(2)$] $|y_{i,\varepsilon_n}^i-y_{j,\varepsilon_n}^{i^\prime}|\rightarrow\infty$ as $n\rightarrow\infty$ for $i\neq i^\prime$.
\item[$(3)$] $\|u_{j,\varepsilon_n}\|^2=\sum\limits_{i}\|U_j^i\|^2+\|r_{j,\varepsilon_n}\|^2+o_n(1)$.
\item[$(4)$] $\|r_{j,\varepsilon_n}\|_s=o_n(1)$ and $\|u_{j,\varepsilon_n}\|_s^s=\sum\limits_{i}\|U_j^i\|_s^s+o_n(1), s\in (2,6)$.
\end{itemize}
\end{Lem}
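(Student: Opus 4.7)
This is a statement of the abstract profile decomposition of Tintarev--Fieseler applied to the particular bounded sequence $\{u_{j,\varepsilon_n}\}\subset H^1(\mathbb{R}^3)$. My plan is to reduce the lemma to that abstract machinery after verifying its only hypothesis (boundedness), and then to describe how the four stated properties read off from the construction.

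The first step is to note that by Lemma \ref{Lem4.1}, the sequence $\{u_{j,\varepsilon_n}\}$ is bounded in $H^1(\mathbb{R}^3)$ uniformly in $n$. The translation group $\{\tau_y u := u(\cdot-y) : y\in\mathbb{R}^3\}$ acts on $H^1(\mathbb{R}^3)$ by isometries, and the $H^1 \hookrightarrow L^s_{\mathrm{loc}}$ embedding is compact for $s\in[1,6)$ while $H^1\hookrightarrow L^s$ (global) is only continuous. This is precisely the setting of the Tintarev--Fieseler dislocation framework, so I will invoke their profile decomposition theorem to obtain a (possibly finite, possibly countably infinite) collection of nontrivial profiles $U_j^i\in H^1(\mathbb{R}^3)$ and translation centers $y_{j,\varepsilon_n}^i\in\mathbb{R}^3$.

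The concrete extraction I would carry out is iterative. Set $w_n^0:=u_{j,\varepsilon_n}$. Pass to a weakly convergent subsequence $w_n^0 \rightharpoonup U_j^0$. If $U_j^0\neq 0$, set $y_{j,\varepsilon_n}^0=0$; otherwise, if $\limsup_n \sup_{y\in\mathbb{R}^3}\int_{B_1(y)}|w_n^0|^2\,dx>0$, pick $y_{j,\varepsilon_n}^0$ realizing the supremum up to a factor, so that $w_n^0(\cdot+y_{j,\varepsilon_n}^0)\rightharpoonup U_j^0\neq 0$; if even this supremum tends to zero, Lions' vanishing lemma gives $\|w_n^0\|_s\to 0$ for all $s\in(2,6)$, and the procedure terminates with no bubbles. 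In the nontrivial case define the residual $w_n^1(x):=w_n^0(x)-U_j^0(x-y_{j,\varepsilon_n}^0)$ and iterate on $\{w_n^1\}$, extracting $U_j^1$ and $y_{j,\varepsilon_n}^1$, and so on. At each step, weak convergence yields the Brezis--Lieb-type identity
\begin{equation*}
\|w_n^{i+1}\|^2 = \|w_n^i\|^2 - \|U_j^i\|^2 + o_n(1),
\end{equation*}
so summing gives property $(3)$.

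Property $(2)$ -- the mutual divergence of the translation centers -- is the point I expect to require the most care, and it is ensured by the iterative choice: if along a subsequence $y_{j,\varepsilon_n}^i - y_{j,\varepsilon_n}^{i'}$ stayed bounded, then one would absorb $U_j^{i'}$ (up to a fixed translation) into the weak limit that already produced $U_j^i$, contradicting that $w_n^{i'}$ was constructed with $U_j^0,\dots,U_j^{i'-1}$ already subtracted off. Given $(2)$, the profiles are ``asymptotically orthogonal'' and by the Brezis--Lieb lemma applied to $|\cdot|^s$, one obtains the $L^s$-splitting in $(4)$. Finally, since each extracted profile $U_j^i$ satisfies $\|U_j^i\|^2 \geq c_0 \|U_j^i\|_s^s / \|U_j^i\|^{s-2}$ via Sobolev embedding, and each profile carries a uniformly positive $L^s$-mass (by the very way it was extracted from a nonvanishing concentration), the energies $\|U_j^i\|^2$ cannot accumulate without contradicting the uniform bound $\|u_{j,\varepsilon_n}\|\leq \eta_k$; hence the series $\sum_i \|U_j^i\|^2$ converges and the remainder $r_{j,\varepsilon_n}$ defined by \eqref{eq4.9} can be made to vanish in every $L^s$, $s\in(2,6)$, by a standard diagonal argument. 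This yields $(4)$, and the plan is complete.
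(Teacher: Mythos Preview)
Your proposal is correct and aligned with the paper's approach: the paper does not prove this lemma at all but simply invokes the abstract profile decomposition from Tintarev--Fieseler after noting (via Lemma~\ref{Lem4.1}) that $\{u_{j,\varepsilon_n}\}$ is bounded in $H^1(\mathbb{R}^3)$. Your sketch of the iterative extraction is extra detail the paper omits; note, however, that the paper defers the finiteness of the sum to a separate Lemma~\ref{Lem4.5}, where the lower bound on each profile comes from the limiting equation (Lemma~\ref{Lem4.4}) rather than from the extraction procedure itself.
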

By Lemma \ref{Lem4.2}(2),
\begin{equation*}
\lim_{n\rightarrow\infty}\text{dist}(y_{j,\varepsilon_n}^i,\Lambda_{\varepsilon_n})<\infty.
\end{equation*}
Up to a subsequence, we assume that
\begin{equation}\label{eq4.10}
y_j^i=\lim_{n\rightarrow\infty}\varepsilon_n y_{j,\varepsilon_n}^i.
\end{equation}
Since $\text{dist}(y_{j,\varepsilon_n}^i,\Lambda_{\varepsilon_n})=\varepsilon_n^{-1}\text{dist}(\varepsilon_n y_{j,\varepsilon_n}^i,\Lambda)$, we have
\begin{equation}\label{eq4.11}
\text{dist}(y_j^i,\Lambda)=0,~i.e.,~y_j^i\in \overline{\Lambda}.
\end{equation}

A similar argument of \cite[Lemma 3.2]{Liu-Liu-Wang2019JDE} but easier, we have the following Lemma.
\begin{Lem}\label{Lem4.4}
Assume $\varepsilon_n\rightarrow0^+$ as $n\rightarrow\infty$, $\Phi_{\varepsilon_n}^\prime(u_n)=0, \{u_n\}$ is bounded in $H^1(\mathbb{R}^3)$. Assume $\tilde{u}_n=u_n(\cdot +y_n)\rightharpoonup U$ in $H^1(\mathbb{R}^3), y_n\in \mathbb{R}^3, \lim\limits_{n\rightarrow\infty} \varepsilon_n y_n=y^*$.
\begin{itemize}
\item[$(1)$] If $\lim\limits_{n\rightarrow \infty}\text{dist}(y_n,\partial \Lambda_{\varepsilon_n})=\infty$, then $y_n\in \Lambda_{\varepsilon_n}$ and $U$ satisfies
\begin{equation*}
\int_{\mathbb{R}^3}\nabla U \nabla \varphi dx+\int_{\mathbb{R}^3}V(y^*)U\varphi dx+\int_{\mathbb{R}^3}\phi_UU\varphi dx=\int_{\mathbb{R}^3}K(y^*)f(U)\varphi dx~~\text{for}~\varphi\in C_0^\infty(\mathbb{R}^3).
\end{equation*}
\item[$(2)$] If $\lim\limits_{n\rightarrow \infty}\text{dist}(y_n,\partial \Lambda_{\varepsilon_n})<\infty$ (without loss of generality we assume $\lim\limits_{n\rightarrow \infty}\text{dist}(y_n,\partial \Lambda_{\varepsilon_n})=0$), then $U$ satisfies
\begin{equation*}
\int_{\mathbb{R}_+^3}\nabla U \nabla \varphi dx+\int_{\mathbb{R}_+^3}V(y^*)U\varphi dx+\int_{\mathbb{R}_+^3}\phi_UU\varphi dx=\int_{\mathbb{R}_+^3}K(y^*)f(U)\varphi dx~~\text{for}~\varphi\in C_0^\infty(\mathbb{R}_+^3)
\end{equation*}
and $U(x)=0$ for $x=(x_1,x_2,x_3),x_3\leq 0$, where $\mathbb{R}_+^3=\{x| x=(x_1,x_2,x_3)\in \mathbb{R}^3,x_3>0\}$.
\end{itemize}
\end{Lem}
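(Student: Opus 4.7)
The plan is to pass to the limit in the weak form of $\Phi_{\varepsilon_n}'(u_n)=0$ after translating by $y_n$. For $\psi\in C_0^\infty(\mathbb{R}^3)$, choosing $\varphi(x)=\psi(x-y_n)$ as test function and using the translation invariance $\phi_{u_n}(\cdot+y_n)=\phi_{\tilde u_n}$ gives
\[
\int\bigl[\nabla\tilde u_n\!\cdot\!\nabla\psi+V(\varepsilon_n(x+y_n))\tilde u_n\psi+\phi_{\tilde u_n}\tilde u_n\psi+2\beta\kappa(u_n)\chi_{\varepsilon_n}(x+y_n)\tilde u_n\psi-K(\varepsilon_n(x+y_n))f(\tilde u_n)\psi\bigr]dx=0.
\]
Most of the passages to the limit are standard: since $\varepsilon_n(x+y_n)\to y^*$ uniformly on compacta, $V$ and $K$ converge to $V(y^*),K(y^*)$; the weak convergence $\tilde u_n\rightharpoonup U$ in $H^1$ together with strong $L^q_{\mathrm{loc}}$ convergence for $q\in[2,6)$ and $(f_1)$--$(f_2)$ handles the nonlinearity; the Hardy--Littlewood--Sobolev inequality together with $\tilde u_n^2\to U^2$ in $L^{6/5}_{\mathrm{loc}}$ handles the Poisson term; and $\kappa(u_n)$ is uniformly bounded since $Q_{\varepsilon_n}(u_n)\leq\eta_k$ by Lemma \ref{Lem4.1}. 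Thus everything reduces to controlling the penalization contribution in each case.

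For Case $(1)$, I will first argue $y_n\in\Lambda_{\varepsilon_n}$ for $n$ large, assuming $U\not\equiv 0$ (otherwise the claimed equation is trivial). Indeed, if $y_n\notin\Lambda_{\varepsilon_n}$ along a subsequence, then $y_n$ lies outside $(\Lambda_{\varepsilon_n})^R$ for every fixed $R>0$ and $n$ large, so Lemma \ref{Lem4.2}(2) yields $\|\tilde u_n\|_{L^\infty(B_R)}\leq C_R\varepsilon_n^3\to 0$, forcing $U\equiv 0$, a contradiction. Once $y_n$ is deep inside $\Lambda_{\varepsilon_n}$, any compact $K\subset\mathbb{R}^3$ satisfies $K\subset\Lambda_{\varepsilon_n}-y_n$ for $n$ large, hence $\chi_{\varepsilon_n}(x+y_n)\equiv 0$ on $K$; the penalization drops out and the full-space equation emerges in the limit.

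For Case $(2)$, I will first use a WLOG reduction: if $\lim\mathrm{dist}(y_n,\partial\Lambda_{\varepsilon_n})=d\geq 0$ is finite but positive, translating $y_n$ along the outward normal direction by $d$ reduces to $d=0$, at the cost of replacing the weak limit by a shifted version. Then picking $z_n\in\partial\Lambda_{\varepsilon_n}$ with $|y_n-z_n|\to 0$ gives $\varepsilon_n z_n\to y^*\in\partial\Lambda$; by $C^1$ smoothness of $\partial\Lambda$, after a rigid motion I may assume the outward unit normal at $y^*$ is $-e_3$, so that $\Lambda\cap B_r(y^*)=\{x_3>0\}\cap B_r(y^*)$ for small $r$, and the rescaled boundary $\partial\Lambda_{\varepsilon_n}-y_n$ flattens to $\{x_3=0\}$ uniformly on compacta. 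For $\psi\in C_0^\infty(\mathbb{R}^3_+)$ the support sits in $\Lambda_{\varepsilon_n}-y_n$ for $n$ large, the penalization vanishes there, and the half-space equation emerges in the limit. To show $U\equiv 0$ on $\{x_3\leq 0\}$, fix $\delta,R>0$: on $\{-R<x_3<-\delta\}\cap B_R$ the distance to $\Lambda_{\varepsilon_n}$ is bounded below by $\delta/2$ for $n$ large, so $\chi_{\varepsilon_n}(x+y_n)\geq\varsigma(\delta/2)\varepsilon_n^{-6}$; combining with $\int\chi_{\varepsilon_n}(x+y_n)\tilde u_n^2\,dx\leq 1+\eta_k^{1/\beta}$ yields $\|\tilde u_n\|_{L^2(\{-R<x_3<-\delta\}\cap B_R)}^2\leq C\varepsilon_n^6\to 0$, so $U=0$ a.e.\ there; letting $\delta\downarrow 0$ and $R\uparrow\infty$ finishes the proof.

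The hard part will be the geometric analysis in Case $(2)$: verifying carefully that the rescaled domain $\Lambda_{\varepsilon_n}-y_n$ converges to the half-space cleanly enough to separate "inside" and "outside" contributions, and securing the quantitative lower bound $\chi_{\varepsilon_n}(x+y_n)\gtrsim\varepsilon_n^{-6}$ uniformly on compact sets bounded away from $\{x_3=0\}$. Both rely on $C^1$ regularity of $\partial\Lambda$ together with monotonicity of $\varsigma$, but have to be handled with careful bookkeeping around the transition band of unit width in the rescaled coordinates.
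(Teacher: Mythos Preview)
Your proposal is correct and follows the standard approach that the paper itself defers to (the paper gives no proof, only the remark ``A similar argument of \cite[Lemma 3.2]{Liu-Liu-Wang2019JDE} but easier''). One small slip: you invoke Lemma~\ref{Lem4.1} to bound $\kappa(u_n)$ and $\int\chi_{\varepsilon_n}u_n^2$, but Lemma~\ref{Lem4.1} is stated only for the specific critical points $u_{j,\varepsilon}$ with critical values $\leq\tilde c_k$, whereas Lemma~\ref{Lem4.4} assumes only $\Phi_{\varepsilon_n}'(u_n)=0$ and $\{u_n\}$ bounded in $H^1$; the needed bound follows instead directly from $\langle\Phi_{\varepsilon_n}'(u_n),u_n\rangle=0$ together with $\|u_n\|\leq C$ (the penalization term on the left is controlled by $\int K f(u_n)u_n\leq C\|u_n\|^p$), which also yields $\Phi_{\varepsilon_n}(u_n)\leq L$ so that Lemma~\ref{Lem4.2} applies.
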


\begin{Lem}\label{Lem4.5}
The summation in the profile decomposition \eqref{eq4.9} has only finitely many terms.
\end{Lem}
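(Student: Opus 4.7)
The plan is to show that each nonzero profile $U_j^i$ carries a uniform positive amount of $H^1$-energy, so that the almost-orthogonality identity in Lemma \ref{Lem4.3}(3) together with the uniform bound $\|u_{j,\varepsilon_n}\|\leq \eta_k$ from Lemma \ref{Lem4.1} forces the sum to terminate after finitely many terms.

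First, I would look at an arbitrary nonzero profile $U:=U_j^i$ together with its concentration point $y^*:=y_j^i\in\overline{\Lambda}$. By Lemma \ref{Lem4.4}, $U$ is a (possibly half-space) weak solution of a limiting problem of the form
\begin{equation*}
-\Delta U+V(y^*)U+\phi_U U=K(y^*)f(U)\quad\text{in }\Omega,
\end{equation*}
where $\Omega$ is either $\mathbb{R}^3$ or $\mathbb{R}^3_+$ (with $U$ extended by zero outside). Testing this equation against $U$ itself gives
\begin{equation*}
\int_{\Omega}|\nabla U|^2dx+V(y^*)\int_{\Omega}U^2dx+\int_{\Omega}\phi_U U^2dx=K(y^*)\int_{\Omega}f(U)Udx.
\end{equation*}
Using $V(y^*)\geq a_1$, $K(y^*)\leq b_2$ and dropping the nonnegative nonlocal term $\int \phi_U U^2$, then applying \eqref{eq1.9} (with $\delta$ chosen so that $b_2\delta\leq \tfrac{1}{2}\min\{1,a_1\}$) and the Sobolev embedding $H^1\hookrightarrow L^p$, I obtain
\begin{equation*}
\tfrac{1}{2}\min\{1,a_1\}\|U\|_{H^1(\Omega)}^2\leq C\|U\|_{H^1(\Omega)}^p.
\end{equation*}
Since $p>2$ and $U\not\equiv 0$, this yields a uniform lower bound $\|U\|_{H^1}\geq \rho_0$, where $\rho_0>0$ depends only on $a_1,b_2,p$ (and not on $j$, $i$, or the sequence $\varepsilon_n$).

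Now I invoke Lemma \ref{Lem4.3}(3):
\begin{equation*}
\sum_{i}\|U_j^i\|^2+\|r_{j,\varepsilon_n}\|^2=\|u_{j,\varepsilon_n}\|^2+o_n(1)\leq \eta_k^2+o_n(1),
\end{equation*}
where the last inequality uses Lemma \ref{Lem4.1}. If there were infinitely many nonzero profiles, the left-hand side would be at least $N\rho_0^2$ for every $N$, contradicting the uniform bound $\eta_k^2$. Hence only finitely many $U_j^i$ are nontrivial, and the decomposition \eqref{eq4.9} is a finite sum.

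The only step requiring care is the lower bound on $\|U\|_{H^1}$ in the half-space case $\Omega=\mathbb{R}^3_+$, but since $U$ is extended by zero to all of $\mathbb{R}^3$ and the same Sobolev embedding and pointwise inequalities hold verbatim, this presents no real obstacle; the argument goes through exactly as in the full-space case. The key observation making everything work is that $\phi_U U^2\geq 0$, so the nonlocal term only helps in the lower-bound estimate and can simply be discarded.
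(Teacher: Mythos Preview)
Your proof is correct and follows essentially the same approach as the paper's: test the limiting equation of Lemma~\ref{Lem4.4} with the profile itself, drop the nonnegative nonlocal term, and use the subcritical growth \eqref{eq1.9} to obtain a uniform positive lower bound on each nontrivial $U_j^i$, then invoke the almost-orthogonality from Lemma~\ref{Lem4.3} together with the uniform bound of Lemma~\ref{Lem4.1}. The only (inessential) difference is that the paper extracts a lower bound on $\|U_j^i\|_p$ via interpolation and Sobolev embedding and concludes by property~(4), whereas you extract a lower bound on $\|U_j^i\|_{H^1}$ directly via Sobolev embedding and conclude by property~(3); both routes are equally valid and of the same length.
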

\begin{proof}
In both cases of Lemma \ref{Lem4.4}, we have
\begin{equation*}
\begin{split}
\int_{\mathbb{R}^3}|\nabla U_j^i|^2dx+a_1\int_{\mathbb{R}^3}|U_j^i|^2dx&\leq\int_{\mathbb{R}^3}|\nabla U_j^i|^2dx+\int_{\mathbb{R}^3}V(y_j^i)|U_j^i|^2dx+
\int_{\mathbb{R}^3}\phi_{U_j^i}|U_j^i|^2dx\\
&=\int_{\mathbb{R}^3}K(y_j^i)f(U_j^i)U_j^idx\\
&\leq \frac{a_1}{2}\int_{\mathbb{R}^3}|U_j^i|^2dx+C\int_{\mathbb{R}^3}|U_j^i|^pdx
\end{split}
\end{equation*}
which implies that
\begin{equation*}
\int_{\mathbb{R}^3}|\nabla U_j^i|^2dx+\frac{a_1}{2}\int_{\mathbb{R}^3}|U_j^i|^2dx\leq C\int_{\mathbb{R}^3}|U_j^i|^pdx.
\end{equation*}
By H\"older inequality and Sobolev imbedding inequality
\begin{equation*}
\begin{split}
\int_{\mathbb{R}^3}|U_j^i|^pdx&\leq\big(\int_{\mathbb{R}^3}|U_j^i|^2dx\big)^t\big(\int_{\mathbb{R}^3}|U_j^i|^6dx\big)^{1-t}\\
&\leq C\big(\int_{\mathbb{R}^3}|U_j^i|^2dx\big)^t\big(\int_{\mathbb{R}^3}| \nabla U_j^i|^2dx\big)^{3(1-t)}\\
&\leq C\big(\int_{\mathbb{R}^3}|U_j^i|^pdx\big)^{3-2t}
\end{split}
\end{equation*}
where $t=\frac{6-p}{4}\in (0,1)$. Thus, there exists $\tilde{c}_0>0$ such that $\int_{\mathbb{R}^3}|U_j^i|^pdx\geq \tilde{c}_0$. By the property $(4)$ of the profile decomposition \eqref{eq4.9} the summation has only finite terms.
\end{proof}

For fix any $1\leq j \leq k$, it follows from Lemma \ref{Lem4.4} and the property $(4)$ of the profile decomposition \eqref{eq4.9} that there exist $m_j$ nonzero functions $U_j^i$ in $H^1(\mathbb{R}^3), 1\leq i \leq m_j$ and satisfies \eqref{eq4.10}-\eqref{eq4.11}.
 We may write the set of these limiting points by
\begin{equation*}
\{y_j^1, y_j^2, \cdots,y_j^{s_j}\}=\{\lim_{n\rightarrow \infty}\varepsilon_n y_{j,\varepsilon_n}^i|0\leq i\leq m_j\}\subset \overline{\Lambda},
\end{equation*}
such that $1\leq s_j\leq m_j$ and $y_j^i\neq y_j^{i^\prime}$ for $1\leq i\neq i^\prime \leq s_j$. Set
\begin{equation}\label{eq4.12}
\vartheta_j=
\begin{cases}
\frac{1}{10}\min\{|y_j^i-y_j^{i^\prime}||1\leq i\neq i^\prime \leq s_j\},&\text{if}~s_j\geq 2,\\
\infty,&\text{if}~s_j=1.
\end{cases}
\end{equation}

\begin{Lem}\label{Lem4.6}
If
\begin{equation*}
0<\delta<\vartheta_j,
\end{equation*}
then there exist $C > 0$ and $c > 0$ independent of $n$ such that, for every $1 \leq i \leq m_j$, when $n$ is large enough,
\begin{equation*}
|\nabla u_{j,\varepsilon_n}(x)|+|u_{j,\varepsilon_n}(x)|\leq C\text{exp}(-c \varepsilon_n^{-1}),~~\forall x\in \overline{B(y_{j,\varepsilon_n}^i,\delta \varepsilon_n^{-1}+1)}\setminus B(y_{j,\varepsilon_n}^i,\delta \varepsilon_n^{-1}-1).
\end{equation*}
\end{Lem}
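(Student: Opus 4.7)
Fix $0 < \delta'' < \delta < \delta' < \vartheta_j$ and consider the widened annulus
\[
A_n' := \overline{B(y_{j,\varepsilon_n}^i, \delta' \varepsilon_n^{-1})} \setminus B(y_{j,\varepsilon_n}^i, \delta'' \varepsilon_n^{-1}).
\]
The plan proceeds in four steps: (i) prove $\|u_{j,\varepsilon_n}\|_{L^\infty(A_n')}\to 0$; (ii) convert this into a linear differential inequality for $|u_{j,\varepsilon_n}|$ on $A_n'$; (iii) dominate $|u_{j,\varepsilon_n}|$ by an exponentially decaying barrier on the narrower annulus in the statement; (iv) upgrade the pointwise bound to a gradient bound via interior elliptic regularity. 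For~(i), by \eqref{eq4.10}, \eqref{eq4.11}, and \eqref{eq4.12}, any sequence $z_n \in A_n'$ satisfies $|z_n - y_{j,\varepsilon_n}^{i'}| \to \infty$ for every $1\le i'\le m_j$. I would argue by contradiction: if $|u_{j,\varepsilon_n}(z_n)| \geq \gamma > 0$ along some sequence, set $\tilde u_n(x):= u_{j,\varepsilon_n}(x + z_n)$. Lemma~\ref{Lem4.2}(1) bounds $\|u_{j,\varepsilon_n}\|_\infty$, and standard interior $W^{2,p}$-regularity on unit balls centered in $A_n'$ (with $\phi_{u_{j,\varepsilon_n}}$ uniformly bounded by the Hardy--Littlewood--Sobolev inequality, and the penalization term $\kappa(u_{j,\varepsilon_n})\chi_{\varepsilon_n}(x)u_{j,\varepsilon_n}$ controlled using Lemma~\ref{Lem4.2}(2) in the region where $\chi_{\varepsilon_n}$ is large) yields a uniform $C^{1,\alpha}$-bound, so along a subsequence $\tilde u_n\to \tilde U$ locally uniformly with $\tilde U(0)\ne 0$. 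On the other hand, the profile decomposition \eqref{eq4.9} together with $\|r_{j,\varepsilon_n}\|_s \to 0$ forces $\tilde u_n \rightharpoonup 0$ in $H^1(\mathbb{R}^3)$, a contradiction.

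For~(ii), choose $\eta>0$ with $b_2 \eta \leq a_1/2$; by $(f_1)$ there is $T>0$ with $|f(t)|\le \eta|t|$ for $|t|\le T$, and by~(i) we have $\|u_{j,\varepsilon_n}\|_{L^\infty(A_n')} \leq T$ for $n$ large. Applying Kato's inequality to \eqref{eq2.5}, using $V\ge a_1$, the non-negativity of the nonlocal and penalization terms, and the identity $\operatorname{sign}(u)f(u) = |f(u)|$ (which holds by $(f_3), (f_4)$), I deduce $-\Delta|u_{j,\varepsilon_n}| + (a_1/2)|u_{j,\varepsilon_n}| \leq 0$ distributionally on $A_n'$. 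For~(iii), with $c_0 = \sqrt{a_1/4}$ and $M$ the uniform $L^\infty$-bound from Lemma~\ref{Lem4.2}, the radial function
\[
\psi_n(x) = M\Bigl(e^{-c_0(r - \delta''\varepsilon_n^{-1})} + e^{-c_0(\delta'\varepsilon_n^{-1} - r)}\Bigr),\qquad r=|x - y_{j,\varepsilon_n}^i|,
\]
satisfies $-\Delta\psi_n + (a_1/2)\psi_n \ge 0$ on $A_n'$ once $\delta''\varepsilon_n^{-1}$ is large enough to absorb the three-dimensional curvature term $2c_0/r$; since $\psi_n\ge M\ge|u_{j,\varepsilon_n}|$ on $\partial A_n'$, the weak maximum principle yields $|u_{j,\varepsilon_n}|\le\psi_n$ on $A_n'$. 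Restricting to the annulus of the statement gives $|u_{j,\varepsilon_n}(x)| \le C e^{-c\varepsilon_n^{-1}}$ with $c = c_0\min(\delta-\delta'',\delta'-\delta)$.

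For~(iv), I reinsert the pointwise bound back into \eqref{eq2.5}: each of $V u_{j,\varepsilon_n}$, $K f(u_{j,\varepsilon_n})$, and $\phi_{u_{j,\varepsilon_n}} u_{j,\varepsilon_n}$ is dominated by the same exponential, while $\kappa(u_{j,\varepsilon_n})\chi_{\varepsilon_n}(x) u_{j,\varepsilon_n} \leq C\varepsilon_n^{-6}e^{-c\varepsilon_n^{-1}} = O(e^{-c'\varepsilon_n^{-1}})$ for any $c'<c$; interior $L^p$-estimates on unit balls inside $A_n'$ followed by Sobolev embedding into $C^1$ then give $|\nabla u_{j,\varepsilon_n}(x)|\le C e^{-c'\varepsilon_n^{-1}}$ throughout the annulus of the statement. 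The main obstacle is step~(i): converting the $L^s$-orthogonality of the profile decomposition (for $s\in(2,6)$) into uniform $L^\infty$-smallness on the moving annulus $A_n'$, for which the equicontinuity estimates must remain uniform under the non-local term $\phi_{u_{j,\varepsilon_n}}$ and the possibly unbounded coefficient $\chi_{\varepsilon_n}$ — careful use of Lemma~\ref{Lem4.2}(2) in the region where $\chi_{\varepsilon_n}$ is large is precisely what makes the argument close. Once step~(i) is in hand, steps~(ii)--(iv) are a fairly standard Kato-inequality plus barrier/maximum-principle argument.
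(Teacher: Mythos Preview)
Your proof is correct and reaches the same conclusion, but it takes a genuinely different route from the paper's.

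The paper argues at the \emph{integral} level throughout: after establishing (as you do in step~(i)) that $u_{j,\varepsilon_n}$ is small on a large annulus, it tests equation~\eqref{eq2.5} against $\psi_m^2 u_{j,\varepsilon_n}$ for a sequence of cut-offs supported on nested annuli $R_m$, obtains a recursion $\alpha_m \le \theta\,\alpha_{m-1}$ with $\alpha_m = \int_{R_m}(|\nabla u|^2 + u^2)$ and $\theta<1$, iterates $O(\varepsilon_n^{-1})$ times, and only at the very end invokes elliptic regularity to pass from exponentially small $H^1$-norm on the target annulus to the pointwise bound. Your route is instead \emph{pointwise}: Kato's inequality linearizes the problem to $-\Delta|u|+(a_1/2)|u|\le 0$, then a radial exponential barrier plus the maximum principle gives the decay directly.

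What each approach buys: the paper's energy iteration is more robust against the penalization term, because in the Caccioppoli inequality the term $\kappa(u)\chi_{\varepsilon_n}u^2\psi_m^2$ is non-negative and is simply dropped; the possibly huge coefficient $\chi_{\varepsilon_n}\sim\varepsilon_n^{-6}$ never has to be bounded. By contrast, your step~(i) as written appeals to uniform $C^{1,\alpha}$ bounds coming from $W^{2,p}$ regularity, and this is exactly where $\chi_{\varepsilon_n}$ is dangerous on balls that straddle $\partial\Lambda_{\varepsilon_n}$ (Lemma~\ref{Lem4.2}(2) controls $u$ only outside $(\Lambda_\varepsilon)^\delta$, not on the strip $(\Lambda_\varepsilon)^\delta\setminus\Lambda_\varepsilon$ where $\chi_\varepsilon$ is already large). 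You correctly flag this as the main obstacle; the cleanest fix is to abandon the compactness/contradiction argument and instead derive $\|u_{j,\varepsilon_n}\|_{L^\infty(A_n')}\to 0$ directly from $\int_{A_n'}|u_{j,\varepsilon_n}|^p\to 0$ via the local Moser iteration of Lemma~\ref{Lem4.2}(1), which again only uses the non-negativity of the potential. Once step~(i) is secured that way, your Kato/barrier steps~(ii)--(iv) go through without difficulty and give a pleasant alternative to the paper's iterated-annuli argument.
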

\begin{proof}
Its proof follows the argument as in \cite{Chen-Wang2017CVPDE}, but we present it here for the sake of completeness. We define
\begin{equation*}
A_n^i=\overline{B(y_{j,\varepsilon_n}^i,\frac{3}{2}\delta \varepsilon_n^{-1})}\setminus B(y_{j,\varepsilon_n}^i,\frac{1}{2}\delta \varepsilon_n^{-1}).
\end{equation*}
Then the definition of $\vartheta_j$ and the fact $0<\delta<\vartheta_j$, we deduce that, for every $1\leq i^\prime ,i\leq m_j$,
\begin{equation}\label{eq4.13}
\text{dist}(y_{j,\varepsilon_n}^{i^\prime},A_n^i)\to \infty,~\text{as}~n\rightarrow\infty.
\end{equation}
This, together with property $(4)$ of the profile decomposition \eqref{eq4.9} and the fact
\begin{equation}\label{eq4.14}
\lim_{R\to \infty}\int_{\mathbb{R}^3\setminus B(y_{j,\varepsilon_n}^i,R)}|U_j^i(\cdot-y_{j,\varepsilon_n}^i)|^pdx=0,~1\leq i\leq m_j,
\end{equation}
we get that, for every $1\leq i\leq m_j$,
\begin{equation*}
\lim_{n\to \infty}\int_{A_n^i}|u_{j,\varepsilon_n}|^pdx=0.
\end{equation*}
It follows that there exists $n_0\in \mathbb{N}$ such that for $n \geq n_0$,
\begin{equation}\label{eq4.15}
C(\frac{a_1}{2})|u_{j,{\varepsilon_n}}(x)|^{p-2}<\frac{a_1}{4},~\text{for~any}~x\in A_n^i, 1\leq i\leq m_j,
\end{equation}
where $C(\frac{a_1}{2})$ is defined in \eqref{eq1.9}. For non-negative integer $m$, let
\begin{equation*}
R_m=\overline{B(y_{j,\varepsilon_n}^i,\frac{3}{2}\delta \varepsilon_n^{-1}-m)}\setminus B(y_{j,\varepsilon_n}^i,\frac{1}{2}\delta \varepsilon_n^{-1}+m)
\end{equation*}
and let $\varsigma_m$ be a cut-off function satisfying that $0\leq \varsigma_m(t)\leq 1,|\varsigma_m^\prime(t)|\leq 4$ for all $t\in \mathbb{R}$ and
\begin{equation*}
\varsigma_m(t)=
\begin{cases}
0,&t\leq \frac{1}{2}\delta \varepsilon_n^{-1}+m-1~\text{or}~t\geq \frac{3}{2}\delta \varepsilon_n^{-1}-m+1,\\
1,&\frac{1}{2}\delta \varepsilon_n^{-1}+m\leq t \leq\frac{3}{2}\delta \varepsilon_n^{-1}-m.
\end{cases}
\end{equation*}
For $x\in \mathbb{R}^3$, let $\psi_m(x)=\varsigma_m(|x-y_{j,\varepsilon_n}^i|)$. Multiplying both sides of \eqref{eq2.5} by $\psi_m^2u_{j,\varepsilon_n}$ and integrating on $\mathbb{R}^3$, we get that
\begin{equation}\label{eq4.16}
\begin{split}
&\int_{R_{m-1}}|\nabla u_{j,\varepsilon_n}|^2\psi_m^2dx +\int_{R_{m-1}}\phi_{u_{j,\varepsilon_n}}u_{j,\varepsilon_n}^2\psi_m^2dx+\int_{R_{m-1}}V(\varepsilon_n x)u_{j,\varepsilon_n}^2\psi_m^2dx\\
&\quad+\xi_n\int_{R_{m-1}}\chi_{\varepsilon_n}(x)u_{j,\varepsilon_n}^2\psi_m^2dx-\int_{R_{m-1}}f(u_{j,\varepsilon_n})\psi_m^2u_{j,\varepsilon_n}dx\\
&=-2\int_{R_{m-1}}u_{j,\varepsilon_n}\psi_m\nabla u_{j,\varepsilon_n}\nabla \psi_mdx\\
&\leq 8\int_{R_{m-1}\setminus R_m}|u_{j,\varepsilon_n}|\cdot |\nabla u_{j,\varepsilon_n}|dx,
\end{split}
\end{equation}
where
\begin{equation}\label{eq4.17}
\xi_n:=2\beta\bigg(\int_{\mathbb{R}^3}\chi_{\varepsilon_n}(x)u_{j,\varepsilon_n}^2dx-1\bigg)_+^{\beta-1}.
\end{equation}
By \eqref{eq1.9} and \eqref{eq4.15}, we get that
\begin{equation}\label{eq4.18}
\begin{split}
&\int_{R_{m-1}}|\nabla u_{j,\varepsilon_n}|^2\psi_m^2dx +\int_{R_{m-1}}\phi_{u_{j,\varepsilon_n}}u_{j,\varepsilon_n}^2\psi_m^2dx+\int_{R_{m-1}}V(\varepsilon_n x)u_{j,\varepsilon_n}^2\psi_m^2dx\\
&\quad+\xi_n\int_{R_{m-1}}\chi_{\varepsilon_n}(x)u_{j,\varepsilon_n}^2\psi_m^2dx-\int_{R_{m-1}}f(u_{j,\varepsilon_n})\psi_m^2u_{j,\varepsilon_n}dx\\
&\geq \min\{1,\frac{a_1}{4}\}\int_{R_m}(|\nabla u_{j,\varepsilon_n}|^2+u_{j,\varepsilon_n}^2)dx.
\end{split}
\end{equation}
Combining \eqref{eq4.16} and \eqref{eq4.18} yields that
\begin{equation}\label{eq4.19}
\begin{split}
\int_{R_m}(|\nabla u_{j,\varepsilon_n}|^2+u_{j,\varepsilon_n}^2)dx&\leq \frac{8}{\min\{1,\frac{a_1}{4}\}}\int_{R_{m-1}\setminus R_m}|u_{j,\varepsilon_n}|\cdot |\nabla u_{j,\varepsilon_n}|dx\\
&\leq \frac{4}{\min\{1,\frac{a_1}{4}\}}\int_{R_{m-1}\setminus R_m}(|\nabla u_{j,\varepsilon_n}|^2+u_{j,\varepsilon_n}^2)dx.
\end{split}
\end{equation}
Setting $\alpha_m=\int_{R_m}(|\nabla u_{j,\varepsilon_n}|^2+u_{j,\varepsilon_n}^2)dx$ and $\tilde{C}=\frac{4}{\min\{1,\frac{a_1}{4}\}}$. Then, there holds $\alpha_m\leq \tilde{C}(\alpha_{m-1}-\alpha_m)$ which gives $\alpha_m\leq \theta \alpha_{m-1}$ with $\theta=\frac{\tilde{C}}{1+\tilde{C}}<1$. Hence $\alpha_m\leq \alpha_0\theta^m$. By Lemma \ref{Lem4.1}, we have $\alpha_0\leq \eta_k^2$. Thus, for sufficiently large $n, \alpha_m\leq \eta_k^2\theta^m=\eta_k^2e^{m \ln \theta}$. Let $[x]$ denote the
integer part of $x$. Choosing $m = [\frac{\delta\varepsilon_n^{-1}}{2}]-1$ and noting that $[\frac{\delta \varepsilon_n^{-1}}{2}]-1\geq  \frac{\delta\varepsilon_n^{-1}}{4}$ when $n$ is large enough, we get that
\begin{equation}\label{eq4.20}
\int_{D_n^i}(|\nabla u_{j,\varepsilon_n}|^2+u_{j,\varepsilon_n}^2)dx\leq \alpha_m\leq \eta_k^2\text{exp}(([\delta\varepsilon_n^{-1}/2]-1)\ln \theta)\leq \eta_k^2\text{exp}(\frac{1}{4}\delta \varepsilon_n^{-1}\ln \theta),
\end{equation}
where
\begin{equation*}
D_n^i=\overline{B(y_{j,\varepsilon_n}^i,\delta \varepsilon_n^{-1}+1)}\setminus B(y_{j,\varepsilon_n}^i,\delta \varepsilon_n^{-1}-1).
\end{equation*}
Then the result of this lemma follows from \eqref{eq4.20} and the standard regularity theory of elliptic equation (see \cite{Gilbarg-Trudinger1983book}).
\end{proof}
\par
Note that, from \eqref{eq1.12} in $(VK1)$, we deduce that there exists $\delta_0>0$ with $0< \delta_0<v_j$ such that
 \begin{equation}\label{eq4.21}
\sup_{x\in \Lambda^{\delta_0} \setminus U(\delta_0)}\nabla K(x)\cdot \nabla V(x)<0,
\end{equation}
where $v_j$ be the positive constant given in \eqref{eq4.12}.
\begin{Lem}\label{Lem4.7}
For every $1\leq i\leq m_j , \lim\limits_{\varepsilon \rightarrow 0}\text{dist}(\varepsilon y_{j,\varepsilon}^i,U(\delta_0)) = 0$.
\end{Lem}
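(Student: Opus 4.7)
The plan is to argue by contradiction using a local Pohozaev-type identity. Suppose there exist $1\leq i\leq m_j$ and a subsequence $\varepsilon_n\to 0$ with $\text{dist}(\varepsilon_n y_{j,\varepsilon_n}^i, U(\delta_0))\geq \delta_1>0$. By \eqref{eq4.11}, after passing to a further subsequence, $\varepsilon_n y_{j,\varepsilon_n}^i\to y_j^i\in \overline{\Lambda}\setminus U(\delta_0)\subset \Lambda^{\delta_0}\setminus U(\delta_0)$, the region in which \eqref{eq4.21} applies. I would then multiply equation \eqref{eq2.5} by $\eta_n\,\partial_k u_{j,\varepsilon_n}$ and integrate over $\mathbb{R}^3$, where $\eta_n$ is a smooth radial cutoff equal to one on $B(y_{j,\varepsilon_n}^i,\delta\varepsilon_n^{-1}-1)$ and supported in $B(y_{j,\varepsilon_n}^i,\delta\varepsilon_n^{-1})$, with $0<\delta<\vartheta_j$ fixed.

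After integration by parts, each of the Laplacian, potential, Poisson, and nonlinear terms produces a bulk piece plus cutoff pieces supported in the annulus $D_n^i$ of Lemma \ref{Lem4.6}, where $|u_{j,\varepsilon_n}|+|\nabla u_{j,\varepsilon_n}|\leq C\exp(-c\varepsilon_n^{-1})$. For the Poisson term I would substitute $u^2=-\Delta\phi_u$ and integrate by parts twice to recognize the bulk part as $\tfrac14\int\partial_k\eta_n\,|\nabla\phi_{u_{j,\varepsilon_n}}|^2$ plus analogous cutoff pieces; combined with the pointwise bound $\phi_{u_{j,\varepsilon_n}}\lesssim \varepsilon_n$ and $|\nabla\phi_{u_{j,\varepsilon_n}}|\lesssim \varepsilon_n^2$ on the annulus (from the explicit Riesz-potential representation and the exponential decay of $u_{j,\varepsilon_n}$), all annular pieces are $o(\varepsilon_n)$. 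Dividing by $\varepsilon_n$ and collecting terms produces
\begin{equation*}
\tfrac{1}{2}\!\int\! \eta_n\, \partial_k V(\varepsilon_n x)\,u_{j,\varepsilon_n}^2\,dx \;+\; \tfrac{\xi_{\varepsilon_n}}{2\varepsilon_n}\!\int\! \eta_n\, \partial_k \chi_{\varepsilon_n}\,u_{j,\varepsilon_n}^2\,dx \;=\; \int\! \eta_n\, \partial_k K(\varepsilon_n x)\,F(u_{j,\varepsilon_n})\,dx + o(1),
\end{equation*}
with $\xi_{\varepsilon_n}$ as in \eqref{eq4.17}.

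Next I would pass to the limit after translation: $v_n(y):=u_{j,\varepsilon_n}(y+y_{j,\varepsilon_n}^i)$ converges strongly in $L^p_{\mathrm{loc}}(\mathbb{R}^3)$ to $U_j^i$ (weak $H^1$ plus Rellich, sharpened globally by Lemma \ref{Lem4.6}), while $\nabla V(\varepsilon_n(\cdot+y_{j,\varepsilon_n}^i))\to\nabla V(y_j^i)$ and $\nabla K(\varepsilon_n(\cdot+y_{j,\varepsilon_n}^i))\to\nabla K(y_j^i)$ uniformly on compact sets. When $y_j^i$ is interior to $\Lambda$, the penalization term vanishes since $\chi_{\varepsilon_n}\equiv 0$ on $\text{supp}\,\eta_n$ for large $n$. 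When $y_j^i\in\partial\Lambda$, $\nabla\chi_{\varepsilon_n}$ points along the outward normal $\vec{n}$ of $\partial\Lambda_{\varepsilon_n}$, and condition \eqref{eq1.11} forces the vector $\int\eta_n\nabla\chi_{\varepsilon_n}u_{j,\varepsilon_n}^2$ to project non-negatively onto $\nabla V(y_j^i)$. Taking the dot product of the resulting vector identity (indexed by $k=1,2,3$) with $\nabla V(y_j^i)$ therefore yields in either sub-case
\begin{equation*}
\tfrac{1}{2}\,|\nabla V(y_j^i)|^2\, \|U_j^i\|_2^2 \;\leq\; \bigl(\nabla V(y_j^i)\cdot \nabla K(y_j^i)\bigr)\int_{\mathbb{R}^3} F(U_j^i)\,dx.
\end{equation*}

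By \eqref{eq4.21}, $\nabla V(y_j^i)\cdot\nabla K(y_j^i)<0$ throughout $\Lambda^{\delta_0}\setminus U(\delta_0)$, which in particular forces $\nabla V(y_j^i)\neq 0$ at that point; combined with $F(U_j^i)>0$ a.e.\ on $\{U_j^i\neq 0\}$ (from $(f_3)$) and $U_j^i\not\equiv 0$, the left-hand side above is strictly positive while the right-hand side is strictly negative, the desired contradiction. The most delicate step I anticipate is the boundary sub-case $y_j^i\in\partial\Lambda$: controlling the sign of the penalization contribution $\xi_{\varepsilon_n}\varepsilon_n^{-1}\int\eta_n\partial_k\chi_{\varepsilon_n}u_{j,\varepsilon_n}^2$ after division by $\varepsilon_n$ requires combining Lemma \ref{Lem4.6}, the pointwise bound $|u_{j,\varepsilon_n}|\leq C\varepsilon_n^3$ on $\mathbb{R}^3\setminus(\Lambda_{\varepsilon_n})^\delta$ from Lemma \ref{Lem4.2}, and the uniform near-boundary positivity of $\vec{n}\cdot\nabla V$ inherited from \eqref{eq1.11}.
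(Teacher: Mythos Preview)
Your approach is essentially the paper's: a local Pohozaev-type identity on a ball of radius $\sim\delta_0\varepsilon_n^{-1}$ around the offending center, dotted with $\nabla V$ at that center, combined with the sign conditions \eqref{eq1.11} and \eqref{eq4.21} to force a contradiction. The paper differs only in packaging: it integrates over a sharp ball (producing boundary integrals on $\partial B$) rather than against a cutoff, uses the $n$-dependent direction $\vec t_n=\nabla V(\varepsilon_n y_{j,\varepsilon_n}^{i_0})$, and compares magnitudes directly at finite $n$ (left side $\ge C\varepsilon_n$, right side $o(\varepsilon_n)$) instead of dividing by $\varepsilon_n$ and passing to the limit.

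One small imprecision in your limit step: the integral $\int\eta_n\,u_{j,\varepsilon_n}^2$ over the expanding ball need not converge to $\|U_j^{i}\|_2^2$, since several profiles $U_j^{i'}$ may share the same macroscopic limit point $y_j^{i}$ (recall $s_j\le m_j$ in \eqref{eq4.12}) and all of them then sit inside the ball. The paper avoids this by using only the one-sided bound $\int_B u_{j,\varepsilon_n}^2\ge\tfrac12\|U_j^{i_0}\|_2^2$ at finite $n$; your argument survives with the same inequality, since you only need the left-hand side strictly positive and the right-hand side nonpositive.
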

\begin{proof}
Arguing indirectly, we assume that there exist $1\leq i_0\leq m_j$ and sequence $\varepsilon_n>0$ such that $\lim\limits_{n\to \infty}\varepsilon_n = 0$ and
 \begin{equation*}
\lim_{n\rightarrow\infty}\text{dist}(\varepsilon_n y_{j,\varepsilon_n}^{i_0},U(\delta_0))>0.
\end{equation*}
Note that, from \eqref{eq1.11}, we deduce that there exists $\delta_1>0$ such that, for any $y\in \Lambda^{\delta_1}$
\begin{equation}\label{eq4.22}
\inf_{x\in B(y,\delta_1)\setminus \Lambda}\nabla V(y)\cdot \nabla \text{dist}(x, \partial \Lambda)>0.
\end{equation}
For every $i$, without loss of generality, we may assume that $\lim\limits_{n\rightarrow\infty}\varepsilon_n y_{j,\varepsilon_n}^i$ exists.
It follows from $y_j^{i_0}= \lim\limits_{n\rightarrow\infty}\varepsilon_n y_{j,\varepsilon_n}^{i_0}\notin U(\delta_0)$ that
 \begin{equation*}
\nabla K(y_j^{i_0})\cdot\nabla V(y_j^{i_0})<0,
\end{equation*}
and then
 \begin{equation*}
\nabla V(y_j^{i_0})\neq 0.
\end{equation*}
Thus, we deduce that there exists $\delta_2\in (0,\delta_1)$, we may assume $\delta_2=\delta_0$(we choose $\delta_0$ small enough if necessary), such that for sufficiently large $n$,
 \begin{equation}\label{eq4.23}
\inf_{x\in B(y_{j,\varepsilon_n}^{i_0},\delta_0\varepsilon_n^{-1})}\nabla V(\varepsilon_n x)\cdot \nabla V(\varepsilon_ny_{j,\varepsilon_n}^{i_0})\geq\frac{1}{2}|\nabla V(y_j^{i_0})|^2>0,
\end{equation}
and
 \begin{equation}\label{eq4.24}
\sup_{x\in B(y_{j,\varepsilon_n}^{i_0},\delta_0\varepsilon_n^{-1})}\nabla K(\varepsilon_n x)\cdot \nabla V(\varepsilon_ny_{j,\varepsilon_n}^{i_0})<0.
\end{equation}
Then according to the definition of $\delta_0$ and Lemma \ref{Lem4.6}, there exist $C>0$ and $c>0$ independent of $n$ such that, for $n$ sufficiently large
\begin{equation}\label{eq4.25}
|\nabla u_{j,\varepsilon_n}(x)|+|u_{j,\varepsilon_n}(x)|\leq C\text{exp}(-c \varepsilon_n^{-1}),~~\forall x\in \overline{B(y_{j,\varepsilon_n}^{i_0},\delta_0 \varepsilon_n^{-1}+1)}\setminus B(y_{j,\varepsilon_n}^{i_0},\delta_0 \varepsilon_n^{-1}-1).
\end{equation}
By Lemma \ref{Lem4.1}, we deduce that there exists $C>0$ independent of $n$ such that $\xi_n$, defined by \eqref{eq4.17}, satisfies
\begin{equation}\label{eq4.26}
0\leq \xi_n\leq C,~\forall n.
\end{equation}
Let
\begin{equation*}
\vec{t}_n=\nabla V(\varepsilon_n y_{j,\varepsilon_n}^{i_0}).
\end{equation*}
Since $u_{j,\varepsilon_n}$ solves \eqref{eq2.5}, the elliptic regularity theory implies that $u_{j,\varepsilon_n}$ and $\phi=\phi_{u_{j,\varepsilon_n}}$ are,  at least, $C^1$ function. Multiplying both sides of \eqref{eq2.5} by $\vec{t}_n\cdot \nabla u_{j,\varepsilon_n}$ and integrating in $B(y_{j,\varepsilon_n}^{i_0},\delta_0\varepsilon_n^{-1})$, we get the following local Pohozaev type identity
\begin{equation}\label{eq4.27}
\begin{split}
&\frac{\varepsilon_n}{2}\int_{B(y_{j,\varepsilon_n}^{i_0},\delta_0\varepsilon_n^{-1})}u_{j,\varepsilon_n}^2(\nabla V(\varepsilon_n x)\cdot\vec{t}_n)dx+
\frac{1}{2}\int_{B(y_{j,\varepsilon_n}^{i_0},\delta_0\varepsilon_n^{-1})}\xi_nu_{j,\varepsilon_n}^2(\nabla\chi_{\varepsilon_n}\cdot\vec{t}_n)dx\\
&=\varepsilon_n\int_{B(y_{j,\varepsilon_n}^{i_0},\delta_0\varepsilon_n^{-1})}F(u_{j,\varepsilon_n})(\nabla K(\varepsilon_n x)\cdot\vec{t}_n)dx+\frac{1}{2}\int_{\partial B(y_{j,\varepsilon_n}^{i_0},\delta_0\varepsilon_n^{-1})}\phi u_{j,\varepsilon_n}^2(\vec{t}_n\cdot \nu)ds\\
&\quad-\frac{1}{2}\int_{B(y_{j,\varepsilon_n}^{i_0},\delta_0\varepsilon_n^{-1})}u_{j,\varepsilon_n}^2(\nabla \phi\cdot\vec{t}_n)dx-
\int_{\partial B(y_{j,\varepsilon_n}^{i_0},\delta_0\varepsilon_n^{-1})}K(\varepsilon_n x)F(u_{j,\varepsilon_n})(\vec{t}_n\cdot \nu)ds\\
&\quad+\frac{1}{2}\int_{\partial B(y_{j,\varepsilon_n}^{i_0},\delta_0\varepsilon_n^{-1})}|\nabla u_{j,\varepsilon_n}|^2(\vec{t}_n\cdot \nu)ds-
\int_{\partial B(y_{j,\varepsilon_n}^{i_0},\delta_0\varepsilon_n^{-1})}(\nabla u_{j,\varepsilon_n} \cdot \vec{t}_n)(\nabla u_{j,\varepsilon_n}\cdot \nu)ds\\
&\quad+\frac{1}{2}\int_{\partial B(y_{j,\varepsilon_n}^{i_0},\delta_0\varepsilon_n^{-1})}V(\varepsilon_n x)u_{j,\varepsilon_n}^2(\vec{t}_n\cdot \nu)ds
+\frac{1}{2}\int_{\partial B(y_{j,\varepsilon_n}^{i_0},\delta_0\varepsilon_n^{-1})}\xi_n \chi_{\varepsilon_n}(x)u_{j,\varepsilon_n}^2(\vec{t}_n\cdot \nu)ds,
\end{split}
\end{equation}
where $\nu$ denotes the unit outward normal to the boundary of $B(y_{j,\varepsilon_n}^{i_0},\delta_0\varepsilon_n^{-1})$.
\par
By \eqref{eq4.23} and $u_{j,\varepsilon_n}(\cdot+y_{y,\varepsilon_n}^{i_0})\rightharpoonup U_j^{i_0}$ in $H^1(\mathbb{R}^3)$, we get that, for sufficiently large $n$,
\begin{equation*}
\begin{split}
&\quad\varepsilon_n\int_{B(y_{j,\varepsilon_n}^{i_0},\delta_0\varepsilon_n^{-1})}(\vec{t}_n\cdot (\nabla V)(\varepsilon_n x))u_{j,\varepsilon_n}^2dx\\
&\geq\frac{\varepsilon_n}{2}|\nabla V(y_j^{i_0})|^2\int_{B(0,\delta_0\varepsilon_n^{-1})}u_{j,\varepsilon_n}^2(\cdot+ y_{j,\varepsilon_n}^{i_0})dx\\
&\geq C\varepsilon_n
\end{split}
\end{equation*}
where
\begin{equation*}
C=\frac{1}{4}|\nabla V(y_j^{i_0})|^2\int_{\mathbb{R}^3}|U_j^{i_0}|^2dx>0.
\end{equation*}
From \eqref{eq4.22}, we get that, for any $x\in B(y_{j,\varepsilon_n}^{i_0},\delta_0\varepsilon_n^{-1})\setminus \Lambda_{\varepsilon_n}$,
\begin{equation*}
\nabla V(\varepsilon_n y_{j,\varepsilon_n}^{i_0})\cdot \nabla \text{dist}(x,\partial \Lambda_{\varepsilon_n})>0.
\end{equation*}
It follows that, for any $x\in B(y_{j,\varepsilon_n}^{i_0},\delta_0\varepsilon_n^{-1})$,
\begin{equation}\label{eq4.28}
\vec{t}_n\cdot \nabla \chi_{\varepsilon_n}(x)\geq 0.
\end{equation}
Hence the left hand side of \eqref{eq4.27}
\begin{equation}\label{eq4.29}
\text{LHS}\geq C\varepsilon_n.
\end{equation}
On the other hand, by \eqref{eq4.24} and the definition of $F$, we deduce
\begin{equation}\label{eq4.30}
\varepsilon_n\int_{B(y_{j,\varepsilon_n}^{i_0},\delta_0\varepsilon_n^{-1})}F(u_{j,\varepsilon_n})(\nabla K(\varepsilon_n x)\cdot\vec{t}_n)dx\leq 0.
\end{equation}
Note that
\begin{equation}\label{eq4.31}
\begin{split}
\phi(x)=\phi_{u_{j,\varepsilon_n}}(x)&=\int_{\mathbb{R}^3}\frac{u_{j,\varepsilon_n}^2(y)}{|x-y|}dy=\int_{|x-y|\leq 1}\frac{u_{j,\varepsilon_n}^2(y)}{|x-y|}dy
+\int_{|x-y|\geq 1}\frac{u_{j,\varepsilon_n}^2(y)}{|x-y|}dy\\
&\leq \int_{|x-y|\leq 1}\frac{u_{j,\varepsilon_n}^2(y)}{|x-y|}dy
+\int_{|x-y|\geq 1}u_{j,\varepsilon_n}^2(y)dy\\
&\leq \bigg(\int_{|x-y|\leq 1}\frac{1}{|x-y|^{t^\prime}}dy\bigg)^{\frac{1}{t^\prime}}  \bigg(\int_{|x-y|\leq 1}u_{j,\varepsilon_n}^{2t}(y)dy\bigg)^{\frac{1}{t}}+\eta_k\\
&\leq C_k
\end{split}
\end{equation}
where $t^\prime<3,t\in [1,3],\frac{1}{t}+\frac{1}{t^\prime}=1$. Moreover, using H\"older inequality and the boundedness of $u_{j,\varepsilon_n}$ in $H^1(\mathbb{R}^3)$, one has
\begin{equation}\label{eq4.32}
\begin{split}
\bigg|\int_{B(y_{j,\varepsilon_n}^{i_0},\delta_0\varepsilon_n^{-1})}u_{j,\varepsilon_n}^2(\nabla \phi\cdot\vec{t}_n)dx\bigg|&\leq C\big(\int_{B(y_{j,\varepsilon_n}^{i_0},\delta_0\varepsilon_n^{-1})}u_{j,\varepsilon_n}^4dx\big)^{\frac{1}{2}}
\big(\int_{\mathbb{R}^3}|\nabla \phi |^2dx\big)^{\frac{1}{2}}\\
&\leq C\big(\int_{B(y_{j,\varepsilon_n}^{i_0},\delta_0\varepsilon_n^{-1})}u_{j,\varepsilon_n}^4dx\big)^{\frac{1}{2}}.
\end{split}
\end{equation}
Therefore, it follows from Lemma \ref{Lem4.6}, \eqref{eq4.25}-\eqref{eq4.26} and \eqref{eq4.30}-\eqref{eq4.32} that there exist $C > 0$ and $c > 0$ independent of $n$ such that, for
sufficiently large $n$, the right hand side of \eqref{eq4.27}
\begin{equation}\label{eq4.33}
\text{RHS}\leq C(\varepsilon_n^{-2}+\varepsilon_n^{-\frac{3}{2}})\text{exp}(-c\varepsilon_n^{-1}).
\end{equation}
From \eqref{eq4.29} and \eqref{eq4.33}, we get a contradiction for sufficiently large $n$. This completes this proof.
\end{proof}

\begin{Lem}\label{Lem4.8}
For any $0<\delta<\delta_0$, there exist $c=c(\delta,k)>$ and $C=C(\delta,k)>0$ independent of $\varepsilon$ such that for every $1\leq j \leq k$
\begin{equation*}
|u_{j,\varepsilon}(x)|\leq C\text{exp}(-c\text{dist}(x,\big(U(\delta)\big)_\varepsilon)),~x\in \mathbb{R}^3.
\end{equation*}
\end{Lem}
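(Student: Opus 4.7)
The plan is to combine the concentration information on $\{y_{j,\varepsilon}^i\}_{1\leq i \leq m_j}$ from Lemma \ref{Lem4.7} (which places every $y_{j,\varepsilon}^i$ inside $(U(\delta))_\varepsilon$ for small $\varepsilon$, since $U(\delta_0)\subset U(\delta)$ with a positive buffer whenever $\delta<\delta_0$) with a barrier-function comparison argument. Once the finite concentration core lies inside $(U(\delta))_\varepsilon$, exponential decay from that core will, after the triangle inequality, yield exponential decay from $(U(\delta))_\varepsilon$.

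To reduce matters to a tractable linear inequality, I would first invoke property $(4)$ of the profile decomposition in Lemma \ref{Lem4.3}: for any $\eta>0$ one can choose $R=R(\eta)$ so that $\|u_{j,\varepsilon}\|_{L^p(\mathbb{R}^3\setminus \bigcup_i B(y_{j,\varepsilon}^i,R))}<\eta$. Via a standard Moser/De Giorgi iteration on unit balls applied to \eqref{eq2.5} — whose coefficients $V(\varepsilon x)$, $K(\varepsilon x)$, $\phi_{u_{j,\varepsilon}}$ (bounded pointwise by \eqref{eq4.31}) and penalization factor are all uniformly controlled via Lemmas \ref{Lem4.1} and \ref{Lem4.2} — this $L^p$-smallness upgrades to an $L^\infty$-smallness on the exterior: given any $\eta'>0$, one can arrange $\|u_{j,\varepsilon}\|_{L^\infty(\mathbb{R}^3\setminus \bigcup_i B(y_{j,\varepsilon}^i,R+1))}<\eta'$ for all small $\varepsilon$.

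Next, since $\phi_{u_{j,\varepsilon}}\geq 0$, $\chi_\varepsilon\geq 0$, and the penalization factor is nonnegative, Kato's inequality applied to \eqref{eq2.5} gives
$$-\Delta |u_{j,\varepsilon}| + V(\varepsilon x)|u_{j,\varepsilon}| \leq K(\varepsilon x)|f(u_{j,\varepsilon})|$$
in the distribution sense. Using $(f_1)$--$(f_2)$ to bound $|f(t)|\leq \delta|t|+C(\delta)|t|^{p-1}$ and choosing $\delta$ with $b_2\delta<a_1/4$ together with $\eta'$ small enough that $b_2 C(\delta)(\eta')^{p-2}<a_1/4$, I obtain on the exterior
$$-\Delta |u_{j,\varepsilon}| + \tfrac{a_1}{2}|u_{j,\varepsilon}| \leq 0.$$
I then compare with the exponential barrier
$$W(x)=M\sum_{i=1}^{m_j}\exp(-c|x-y_{j,\varepsilon}^i|),$$
choosing $0<c\leq \sqrt{a_1/2}$ so that a direct computation (using $-\Delta e^{-c|x-y|}=(c^2-2c/|x-y|)e^{-c|x-y|}$ in $\mathbb{R}^3$) yields $-\Delta W+(a_1/2)W\geq 0$ on $\mathbb{R}^3\setminus \{y_{j,\varepsilon}^i\}_i$, and $M\leq C e^{c(R+1)}\|u_{j,\varepsilon}\|_\infty$ large enough that $W\geq |u_{j,\varepsilon}|$ on $\partial\bigl(\bigcup_i B(y_{j,\varepsilon}^i,R+1)\bigr)$. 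The maximum principle on the unbounded exterior domain (valid because $|u_{j,\varepsilon}|,W\to 0$ at infinity) yields $|u_{j,\varepsilon}|\leq W$ there, and since each $y_{j,\varepsilon}^i\in (U(\delta))_\varepsilon$, the triangle inequality $|x-y_{j,\varepsilon}^i|\geq \text{dist}(x,(U(\delta))_\varepsilon)$ gives the claim after enlarging the constant $C$ to absorb the tubular neighborhood $\bigcup_i B(y_{j,\varepsilon}^i,R+1)$ (where Lemma \ref{Lem4.2} furnishes a trivial uniform bound).

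The main obstacle is the uniform $L^p\!\to\! L^\infty$ passage in the second paragraph: one must run Moser iteration on unit balls $B_1(x_0)\subset \mathbb{R}^3\setminus \bigcup_i B(y_{j,\varepsilon}^i,R)$ with an estimate $\|u_{j,\varepsilon}\|_{L^\infty(B_{1/2}(x_0))}\leq C\|u_{j,\varepsilon}\|_{L^p(B_1(x_0))}$ whose constant is independent of both $x_0$ and $\varepsilon$. The homogeneous part of this iteration is exactly the calculation carried out in Lemma \ref{Lem4.2}; the new point is that the nonlocal term $\phi_{u_{j,\varepsilon}}u_{j,\varepsilon}$ and the penalization term must be absorbed using the uniform-in-$\varepsilon$ bounds of Lemma \ref{Lem4.1} and \eqref{eq4.31}, so that the smallness of the $L^p$-seed propagates through every iteration step with constants depending only on $k$, $\delta$, and the fixed structural data.
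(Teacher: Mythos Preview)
Your proposal is correct but proceeds by a genuinely different route from the paper. The paper recycles the energy--annulus iteration of Lemma~\ref{Lem4.6}: it first observes from the profile decomposition (exactly as you do) that $|u_{j,\varepsilon}|^{p-2}$ is uniformly small on the set $\{x:\mathrm{dist}(x,\overline{(U(\delta))_\varepsilon})\geq R_0\}$ for some fixed $R_0$, then introduces nested regions $B_m=\{x:\mathrm{dist}(x,\overline{(U(\delta))_\varepsilon})\geq R_0+m-1\}$ and cutoffs $\eta_m$, tests the equation against $\eta_m^2 u_{j,\varepsilon}$, and obtains a geometric recursion $\alpha_m\leq\theta\alpha_{m-1}$ for the $H^1$-mass $\alpha_m=\int_{B_m}(|\nabla u_{j,\varepsilon}|^2+u_{j,\varepsilon}^2)$; exponential $L^2$-decay then becomes pointwise decay via elliptic regularity. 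Your approach instead passes through Kato's inequality to the linear subsolution inequality $-\Delta|u_{j,\varepsilon}|+\tfrac{a_1}{2}|u_{j,\varepsilon}|\leq 0$ on the exterior and compares with an explicit barrier $W=M\sum_i e^{-c|x-y_{j,\varepsilon}^i|}$ via the maximum principle. The paper's route has the virtue of reusing Lemma~\ref{Lem4.6} almost verbatim and avoiding any regularity discussion for Kato; your route is more explicit about the decay rate and avoids the discrete iteration. One small slip: your displayed formula for $-\Delta e^{-c|x-y|}$ has the wrong sign (it is $(-c^2+2c/|x-y|)e^{-c|x-y|}$ in $\mathbb{R}^3$), but your conclusion that $c\leq\sqrt{a_1/2}$ makes $W$ a supersolution is nonetheless correct.
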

\begin{proof}
From \eqref{eq4.13} and the property $(4)$ of the profile decomposition \eqref{eq4.9}, there exists $R_0>0$ independent of $\varepsilon$ such that
\begin{equation*}
C(\frac{a_1}{2})|u_{j,\varepsilon}(x)|^{p-2}<\frac{a_1}{4},~\text{if}~x\in \mathbb{R}^3\setminus B(y_{j,\varepsilon}^i,R_0).
\end{equation*}
Note that, by Lemma \ref{Lem4.7}, for sufficiently small $\varepsilon>0$, there holds
\begin{equation*}
\{x\in \mathbb{R}^3|\text{dist}(x,\overline{\big(U(\delta)\big)_\varepsilon})\geq R_0\}\subset \mathbb{R}^3\setminus B(y_{j,\varepsilon}^i,R_0)
\end{equation*}
and thus
\begin{equation}\label{eq4.34}
C(\frac{a_1}{2})|u_{j,\varepsilon}(x)|^{p-2}<\frac{a_1}{4},~\text{if~dist}(x,\overline{\big(U(\delta)\big)_\varepsilon})\geq R_0.
\end{equation}
To prove the Lemma \ref{Lem4.8}, it suffices to prove
\begin{equation}\label{eq4.35}
|u_{j,\varepsilon}(x)|\leq C\text{exp}(-c\text{dist}(x,\big(U(\delta)\big)_\varepsilon)),~\text{if~dist}(x,\overline{\big(U(\delta)\big)_\varepsilon})\geq R_0.
\end{equation}
For $m\in \mathbb{N}$, let
\begin{equation*}
B_m=\{x\in \mathbb{R}^3|\text{dist}(x,\overline{\big(U(\delta)\big)_\varepsilon})\geq R_0+m-1\},
\end{equation*}
and let $\rho_m$ be a cut-off function satisfying that $0\leq \rho_m(t)\leq 1, |\rho_m^\prime(t)|\leq 4$ for all $t\in \mathbb{R}$ and
\begin{equation*}
\rho_m(t)=
\begin{cases}
0,&\text{if}~t\leq R_0+m-1,\\
1,&\text{if}~t\geq R_0+m.
\end{cases}
\end{equation*}
For $x\in \mathbb{R}^3$, let $\eta_m(x)=\rho_m(\text{dist}(x,\overline{\big(U(\delta)\big)_\varepsilon}))$. Multiplying both sides of \eqref{eq2.5} by $\eta_m^2u_{j,\varepsilon_n}$ and integrating on $\mathbb{R}^3$, we get that
\begin{equation}\label{eq4.36}
\begin{split}
&\int_{B_{m-1}}|\nabla u_{j,\varepsilon}|^2\eta_m^2dx +\int_{B_{m-1}}\phi_{u_{j,\varepsilon}}u_{j,\varepsilon}^2\eta_m^2dx+\int_{B_{m-1}}V(\varepsilon x)u_{j,\varepsilon}^2\eta_m^2dx\\
&\quad+\xi_\varepsilon\int_{B_{m-1}}\chi_{\varepsilon}(x)u_{j,\varepsilon}^2\eta_m^2dx-\int_{B_{m-1}}f(u_{j,\varepsilon})\eta_m^2u_{j,\varepsilon}dx\\
&=-2\int_{B_{m-1}}u_{j,\varepsilon}\psi_m\nabla u_{j,\varepsilon}\nabla \eta_mdx\\
&\leq 8\int_{B_{m-1}\setminus B_m}|u_{j,\varepsilon}|\cdot |\nabla u_{j,\varepsilon}|dx,
\end{split}
\end{equation}
where
\begin{equation*}
\xi_\varepsilon:=2\beta\big(\int_{\mathbb{R}^3}\chi_\varepsilon(x)u_{j,\varepsilon}^2dx-1\big)_+^{\beta-1}.
\end{equation*}
By \eqref{eq4.34}, we get that
\begin{equation}\label{eq4.37}
\begin{split}
&\int_{R_{m-1}}|\nabla u_{j,\varepsilon}|^2\eta_m^2dx +\int_{R_{m-1}}\phi_{u_{j,\varepsilon}}u_{j,\varepsilon_n}^2\eta_m^2dx+\int_{R_{m-1}}V(\varepsilon x)u_{j,\varepsilon}^2\eta_m^2dx\\
&\quad+\xi_\varepsilon\int_{R_{m-1}}\chi_{\varepsilon}(x)u_{j,\varepsilon_n}^2\eta_m^2dx-\int_{R_{m-1}}f(u_{j,\varepsilon})\eta_m^2u_{j,\varepsilon}dx\\
&\geq \min\{1,\frac{a_1}{4}\}\int_{R_m}(|\nabla u_{j,\varepsilon}|^2+u_{j,\varepsilon}^2)dx.
\end{split}
\end{equation}
Combining \eqref{eq4.36} and \eqref{eq4.37} yields that
\begin{equation*}
\begin{split}
\int_{B_m}(|\nabla u_{j,\varepsilon}|^2+u_{j,\varepsilon}^2)dx&\leq \frac{8}{\min\{1,\frac{a_1}{4}\}}\int_{R_m\setminus R_{m+1}}|u_{j,\varepsilon}|\cdot |\nabla u_{j,\varepsilon}|dx\\
&\leq \frac{4}{\min\{1,\frac{a_1}{4}\}}\int_{R_m\setminus B_{m+1}}(|\nabla u_{j,\varepsilon}|^2+u_{j,\varepsilon}^2)dx.
\end{split}
\end{equation*}
The rest of proof for \eqref{eq4.35} is similar to the proof of Lemma \ref{Lem4.6}.
\end{proof}

\begin{Lem}\label{Lem4.9}
There exists $\varepsilon_k>0$ such that $0<\varepsilon<\varepsilon_k$, then for every $1\leq j \leq k,u_{j,\varepsilon}$ is a solution of system \eqref{eq2.1}.
\end{Lem}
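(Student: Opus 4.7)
The plan is to show that for $\varepsilon$ sufficiently small, the penalization term $Q_\varepsilon(u_{j,\varepsilon})$ vanishes; once this is established, the extra term $2\beta(\int \chi_\varepsilon u^2 -1)_+^{\beta-1}\chi_\varepsilon u$ in \eqref{eq2.5} disappears (because $\beta-1>0$), so $u_{j,\varepsilon}$ becomes a critical point of the original functional $I_\varepsilon$ and therefore a solution of system \eqref{eq2.1}. Thus the whole task reduces to proving
\begin{equation*}
\int_{\mathbb{R}^3}\chi_\varepsilon(x)u_{j,\varepsilon}^2\,dx\leq 1\qquad\text{for all }0<\varepsilon<\varepsilon_k\text{ and }1\leq j\leq k.
\end{equation*}

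First I would exploit that $\chi_\varepsilon$ is supported in $\mathbb{R}^3\setminus\Lambda_\varepsilon$ and bounded above by $\varepsilon^{-6}$ to get
\begin{equation*}
\int_{\mathbb{R}^3}\chi_\varepsilon(x)u_{j,\varepsilon}^2\,dx\leq \varepsilon^{-6}\int_{\mathbb{R}^3\setminus\Lambda_\varepsilon}u_{j,\varepsilon}^2\,dx.
\end{equation*}
Then I would fix some $\delta\in(0,\delta_0)$ (e.g.\ $\delta=\delta_0/2$) and invoke Lemma \ref{Lem4.8}, which gives constants $c,C>0$ independent of $\varepsilon$ with
\begin{equation*}
|u_{j,\varepsilon}(x)|\leq C\exp\bigl(-c\,\text{dist}(x,(U(\delta))_\varepsilon)\bigr)\quad\text{for all }x\in\mathbb{R}^3.
\end{equation*}
The key geometric observation is that $U(\delta)\subset\Lambda$ lies at distance $>\delta$ from $\partial\Lambda$, so after the dilation one has $(U(\delta))_\varepsilon\subset\Lambda_\varepsilon$ with
\begin{equation*}
\text{dist}(x,(U(\delta))_\varepsilon)\geq \text{dist}(x,\partial\Lambda_\varepsilon)+\frac{\delta}{\varepsilon}\quad\text{for all }x\notin\Lambda_\varepsilon,
\end{equation*}
by splitting the straight segment from $x$ to a nearest point of $(U(\delta))_\varepsilon$ at its crossing of $\partial\Lambda_\varepsilon$.

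Using this, the integral outside $\Lambda_\varepsilon$ factorizes as
\begin{equation*}
\int_{\mathbb{R}^3\setminus\Lambda_\varepsilon}u_{j,\varepsilon}^2\,dx\leq C^2 e^{-2c\delta/\varepsilon}\int_{\mathbb{R}^3\setminus\Lambda_\varepsilon}e^{-2c\,\text{dist}(x,\partial\Lambda_\varepsilon)}\,dx\leq C'\varepsilon^{-2}e^{-2c\delta/\varepsilon},
\end{equation*}
where the last bound uses that $\partial\Lambda_\varepsilon$ has $2$-dimensional measure of order $\varepsilon^{-2}$ and a coarea/tubular-neighborhood argument for the exponential in the normal distance. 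Combining with the $\varepsilon^{-6}$ factor yields
\begin{equation*}
\int_{\mathbb{R}^3}\chi_\varepsilon(x)u_{j,\varepsilon}^2\,dx\leq C''\varepsilon^{-8}e^{-2c\delta/\varepsilon}\longrightarrow 0\quad\text{as }\varepsilon\to 0^+,
\end{equation*}
so there exists $\varepsilon_k\in(0,\varepsilon_k')$ such that for $0<\varepsilon<\varepsilon_k$ the right-hand side is less than $1$ for every $1\leq j\leq k$. Hence $Q_\varepsilon(u_{j,\varepsilon})=0$, the penalization term drops out of \eqref{eq2.5}, and $u_{j,\varepsilon}$ solves \eqref{eq2.1}.

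The only delicate point I expect is justifying the exponential-type bound $\int_{\mathbb{R}^3\setminus\Lambda_\varepsilon}e^{-2c\,\text{dist}(x,\partial\Lambda_\varepsilon)}\,dx\leq C'\varepsilon^{-2}$; once the smoothness of $\partial\Lambda$ from $(VK1)$ and a standard tubular-neighborhood parametrization are in hand, this is immediate, and everything else is bookkeeping. All polynomial factors in $\varepsilon^{-1}$ are crushed by the $e^{-2c\delta/\varepsilon}$ factor, which is precisely why the penalization device of del Pino--Felmer works in the semi-classical regime.
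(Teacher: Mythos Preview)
Your proposal is correct and follows essentially the same route as the paper: both reduce the claim to showing $\int_{\mathbb{R}^3}\chi_\varepsilon u_{j,\varepsilon}^2\,dx\leq 1$, bound $\chi_\varepsilon\leq\varepsilon^{-6}$ on $\mathbb{R}^3\setminus\Lambda_\varepsilon$, apply Lemma~\ref{Lem4.8}, use that $\text{dist}(x,(U(\delta))_\varepsilon)\geq\delta/\varepsilon$ for $x\notin\Lambda_\varepsilon$, and arrive at a bound of the form $C\varepsilon^{-8}e^{-2c\delta/\varepsilon}\to 0$. The only cosmetic difference is that the paper picks a nearest point $y_x\in(\overline{U(\delta)})_\varepsilon$ and (somewhat loosely) reduces to $\int_{|x|\geq\delta/\varepsilon}e^{-2c|x|}dx$, whereas you split the distance via $\partial\Lambda_\varepsilon$ and invoke a tubular-neighborhood estimate; your version is arguably a bit more careful, but the argument and the outcome are the same.
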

\begin{proof}
For any $0<\delta< \delta_0$, we have
\begin{equation*}
\text{dist}(U(\delta),\partial \Lambda)=\delta>0.
\end{equation*}
Furthermore, for any $x\in \mathbb{R}^3\setminus \Lambda_\varepsilon$, there exists $y_x\in (\overline{U(\delta)})_\varepsilon$ such that
\begin{equation*}
|x-y_x|=\text{dist}(x,(\overline{U(\delta)})_\varepsilon)=\text{dist}(x,(U(\delta))_\varepsilon)\geq \delta\varepsilon^{-1}.
\end{equation*}
Therefore, by Lemma \ref{Lem4.8}, we get that, for every $1\leq j \leq k$
\begin{equation*}
\begin{split}
\int_{\mathbb{R}^3}\chi_\varepsilon(x) u_{j,\varepsilon}^2dx&\leq\varepsilon^{-6}\int_{\mathbb{R}^3\setminus \Lambda_\varepsilon}u_{j,\varepsilon}^2dx\\
&\leq C\varepsilon^{-6}\int_{\mathbb{R}^3\setminus \Lambda_\varepsilon}e^{-2c\text{dist}(x,(U(\delta))_\varepsilon)}dx\\
&\leq C\varepsilon^{-6}\int_{\{x:|x-y_x|\geq \delta\varepsilon^{-1}\}}e^{-2c|x-y_x|}dx\\
&=C\varepsilon^{-6}\int_{\{x:|x|\geq \delta\varepsilon^{-1}\}}e^{-2c|x|}dx\\
&\leq C\varepsilon^{-8}e^{-2c \delta\varepsilon^{-1}}\rightarrow 0~\text{as}~\varepsilon\rightarrow 0.
\end{split}
\end{equation*}
Thus, there exists $\varepsilon_k>0$ such that for any $\varepsilon\in (0,\varepsilon_k)$
\begin{equation*}
\int_{\mathbb{R}^3}\chi_\varepsilon(x) u_{j,\varepsilon}^2dx\leq 1
\end{equation*}
which implies that $Q_\varepsilon(u_{j,\varepsilon})=0$ for any $\varepsilon\in (0,\varepsilon_k)$, and so $u_{j,\varepsilon}$ is a solution of system \eqref{eq2.1}.
\end{proof}

{\bf Proof of Theorem 1.1} The results of Theorem \ref{Thm1.1} follows from Theorem \ref{Thm3.2}, Lemma \ref{Lem4.8} and Lemma \ref{Lem4.9} immediately.
\par
\vspace{3mm}
{\bf Proof of Theorem 1.3} The proof of Theorem \ref{Thm1.3} is almost the same as that for Theorem \ref{Thm1.1} but easier, and the main difference is in Lemma \ref{Lem4.7} and Lemma \ref{Lem4.8}.
\begin{Lem}\label{Lem4.10}
For every $1\leq i\leq m_j , \lim\limits_{\varepsilon \rightarrow 0}\text{dist}(\varepsilon y_{j,\varepsilon}^i,\mathcal{A}) = 0$.
\end{Lem}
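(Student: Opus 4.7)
The plan is to adapt the argument of Lemma \ref{Lem4.7} to the single-potential setting, taking advantage of the simplification that $K\equiv K_0$ is constant and therefore $\nabla K\equiv 0$, which makes the conflicting condition \eqref{eq1.12} superfluous.

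I would argue by contradiction: suppose there exist $1\leq i_0\leq m_j$ and a sequence $\varepsilon_n\to 0^+$ with $\liminf_{n\to\infty}\text{dist}(\varepsilon_n y_{j,\varepsilon_n}^{i_0},\mathcal{A})>0$. Set $y_j^{i_0}=\lim_{n\to\infty}\varepsilon_n y_{j,\varepsilon_n}^{i_0}$; by the analogue of \eqref{eq4.11} this limit lies in $\overline{\Lambda}$. Because $y_j^{i_0}\notin \mathcal{A}$ (by hypothesis) and condition \eqref{eq1.11} forbids critical points of $V$ on $\partial\Lambda$, we obtain $\nabla V(y_j^{i_0})\neq 0$. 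Then, choosing $\delta_0>0$ smaller than $\vartheta_j$ from \eqref{eq4.12} and small enough that $\nabla V$ is nearly constant on $\varepsilon_n\cdot B(y_{j,\varepsilon_n}^{i_0},\delta_0\varepsilon_n^{-1})$, and writing $\vec t_n:=\nabla V(\varepsilon_n y_{j,\varepsilon_n}^{i_0})$, the analogue of \eqref{eq4.23} holds:
\[
\inf_{x\in B(y_{j,\varepsilon_n}^{i_0},\delta_0\varepsilon_n^{-1})}\nabla V(\varepsilon_n x)\cdot \vec t_n\;\geq\;\tfrac{1}{2}|\nabla V(y_j^{i_0})|^2\;>\;0.
\]

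Next I would set up the local Pohozaev identity by multiplying the version of \eqref{eq2.5} associated with system \eqref{eq1.17} (so $K(\varepsilon x)$ is replaced by $K_0$) by $\vec t_n\cdot\nabla u_{j,\varepsilon_n}$ and integrating on $B(y_{j,\varepsilon_n}^{i_0},\delta_0\varepsilon_n^{-1})$. The resulting identity is exactly \eqref{eq4.27}, except that the bulk $\nabla K$ term $\varepsilon_n\!\int F(u_{j,\varepsilon_n})(\nabla K\cdot \vec t_n)\,dx$ vanishes identically; in particular the cross-gradient hypothesis \eqref{eq1.12} is not invoked. The lower bound $\text{LHS}\geq C\varepsilon_n$ then follows verbatim, using the weak convergence $u_{j,\varepsilon_n}(\cdot+y_{j,\varepsilon_n}^{i_0})\rightharpoonup U_j^{i_0}\neq 0$ and the non-negativity $\xi_n(\vec t_n\cdot\nabla\chi_{\varepsilon_n})\geq 0$, which relies only on \eqref{eq4.22}, itself a consequence of \eqref{eq1.11}.

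The right-hand side is estimated exactly as in Lemma \ref{Lem4.7}: boundary integrals over $\partial B(y_{j,\varepsilon_n}^{i_0},\delta_0\varepsilon_n^{-1})$ are controlled by Lemma \ref{Lem4.6} (exponential decay of $u_{j,\varepsilon_n}$ and $|\nabla u_{j,\varepsilon_n}|$ on the annular decay region, since $\delta_0<\vartheta_j$), the uniform pointwise bound \eqref{eq4.31} on $\phi=\phi_{u_{j,\varepsilon_n}}$, and the H\"older control \eqref{eq4.32} on the nonlocal bulk term $\int u_{j,\varepsilon_n}^2\,\vec t_n\cdot\nabla\phi\,dx$. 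Putting these together yields the analogue of \eqref{eq4.33}, i.e.\ $\text{RHS}\leq C(\varepsilon_n^{-2}+\varepsilon_n^{-3/2})\exp(-c\varepsilon_n^{-1})$, which is $o(\varepsilon_n)$. The clash with $\text{LHS}\geq C\varepsilon_n$ furnishes the contradiction.

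The main obstacle I expect is purely geometric bookkeeping: selecting $\delta_0$ once and for all so that the ball $B(y_{j,\varepsilon_n}^{i_0},\delta_0\varepsilon_n^{-1})$ lies inside the annular region where Lemma \ref{Lem4.6} applies, $\nabla V$ stays close to $\vec t_n$ on it, and, in case the ball intersects $\partial\Lambda_{\varepsilon_n}$, the normal-pointing condition \eqref{eq4.22} forces $\vec t_n\cdot\nabla\chi_{\varepsilon_n}\geq 0$. The analytic input is strictly simpler than in Lemma \ref{Lem4.7}, because the removal of the $\nabla K$ term dispenses with the delicate interplay between $V$ and $K$ that required \eqref{eq4.21}.
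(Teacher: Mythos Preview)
Your proposal is correct and follows exactly the approach the paper intends: argue by contradiction, use $y_j^{i_0}\notin\mathcal{A}$ (together with \eqref{eq1.11} on $\partial\Lambda$) to get $\nabla V(y_j^{i_0})\neq 0$ and hence \eqref{eq4.23}, then run the local Pohozaev identity of Lemma~\ref{Lem4.7} with the simplification that the $\nabla K$ bulk term vanishes identically. The paper's own proof is in fact just a two-line sketch that says precisely this and leaves the remaining details (which you have spelled out) to the reader.
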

\begin{proof}
Arguing indirectly, we assume that there exist $1\leq i_0\leq m_j$ and sequence $\varepsilon_n>0$ such that $\lim\limits_{n\to \infty}\varepsilon_n = 0$ and
 \begin{equation*}
\lim_{n\rightarrow\infty}\text{dist}(\varepsilon_n y_{j,\varepsilon_n}^{i_0},\mathcal{A})>0.
\end{equation*}
It follows from $y_j^{i_0}= \lim\limits_{n\to \infty}\varepsilon_n y_{j,\varepsilon_n}^{i_0}\notin \mathcal{A}$ that $\nabla V(y_j^{i_0})\neq 0$, and then \eqref{eq4.23} is true. For other details in the proof, we leave them to the readers and omit it here.
\end{proof}

Using Lemma \ref{Lem4.10} and a similar argument as in the proof of Lemma \ref{Lem4.8}, we can obtain the following Lemma from line to line.
\begin{Lem}\label{Lem4.11}
For any $\delta>0$, there exist $c=c(\delta,k)>$ and $C=C(\delta,k)>0$ independent of $\varepsilon$ such that for every $1\leq j \leq k$
\begin{equation*}
|u_{j,\varepsilon}(x)|\leq C\text{exp}(-c\text{dist}(x,\big(\mathcal{A}^\delta\big)_\varepsilon)),~x\in \mathbb{R}^3.
\end{equation*}
\end{Lem}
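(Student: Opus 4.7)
The plan is to adapt the proof of Lemma~\ref{Lem4.8} essentially verbatim, with the set $U(\delta)$ replaced throughout by $\mathcal{A}^\delta$, using Lemma~\ref{Lem4.10} in place of Lemma~\ref{Lem4.7} to locate the concentration points. Since no additional non-orthogonality condition on $\nabla V \cdot \nabla K$ enters in the single-potential setting, the argument is in fact simpler: we only need the qualitative information that each $\varepsilon y_{j,\varepsilon}^i$ lies near $\mathcal{A}$.

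First, I would establish the key smallness statement: there exists $R_0>0$ independent of $\varepsilon$ such that
\begin{equation*}
C(a_1/2)\,|u_{j,\varepsilon}(x)|^{p-2}<\frac{a_1}{4},\qquad \forall\,x\in \mathbb{R}^3\setminus\bigcup_{i=1}^{m_j} B(y_{j,\varepsilon}^i,R_0),
\end{equation*}
where $C(a_1/2)$ is the constant from \eqref{eq1.9}. This follows from the profile decomposition \eqref{eq4.9}, the $L^p$ tail control \eqref{eq4.14}, and property~$(4)$ of Lemma~\ref{Lem4.3}, exactly as in the opening of the proof of Lemma~\ref{Lem4.8}. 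Next, by Lemma~\ref{Lem4.10}, for all $\varepsilon$ sufficiently small one has $\varepsilon y_{j,\varepsilon}^i\in \mathcal{A}^\delta$, i.e.\ $y_{j,\varepsilon}^i\in (\mathcal{A}^\delta)_\varepsilon$, for every $1\le i\le m_j$. Consequently
\begin{equation*}
\bigl\{x\in\mathbb{R}^3 : \mathrm{dist}(x,\overline{(\mathcal{A}^\delta)_\varepsilon})\geq R_0\bigr\}\subset \mathbb{R}^3\setminus\bigcup_{i=1}^{m_j} B(y_{j,\varepsilon}^i,R_0),
\end{equation*}
so the smallness inequality holds on the entire exterior region in which we wish to prove decay.

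Third, I would run the Caccioppoli/cutoff iteration. For $m\in\mathbb{N}$ set $B_m=\{x:\mathrm{dist}(x,\overline{(\mathcal{A}^\delta)_\varepsilon})\geq R_0+m-1\}$ and let $\eta_m$ be a Lipschitz cutoff, $0\le\eta_m\le 1$, with $\eta_m\equiv 0$ on $\mathbb{R}^3\setminus B_{m-1}$, $\eta_m\equiv 1$ on $B_m$, and $|\nabla\eta_m|\le 4$. Testing \eqref{eq2.5} against $\eta_m^2 u_{j,\varepsilon}$ and discarding the nonnegative contributions from $\phi_{u_{j,\varepsilon}}u_{j,\varepsilon}^2\eta_m^2$ and the penalization term, while using the above smallness to absorb the right-hand side $K(\varepsilon x)f(u_{j,\varepsilon})\eta_m^2 u_{j,\varepsilon}$, I would obtain
\begin{equation*}
\int_{B_m}\bigl(|\nabla u_{j,\varepsilon}|^2+u_{j,\varepsilon}^2\bigr)dx \leq \theta \int_{B_{m-1}}\bigl(|\nabla u_{j,\varepsilon}|^2+u_{j,\varepsilon}^2\bigr)dx
\end{equation*}
for some $\theta=\theta(a_1)\in(0,1)$ independent of $\varepsilon$ and $m$. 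Iterating in $m$ gives exponential decay of the local energy, and a standard elliptic-regularity upgrade (as at the end of the proof of Lemma~\ref{Lem4.6}) converts this into the pointwise bound claimed in the lemma.

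The main obstacle is not conceptual but bookkeeping: one must verify that the constants $R_0$, $\theta$, and the resulting $c,C$ can be chosen uniformly in $\varepsilon\in(0,\varepsilon_k)$ and in $1\le j\le k$. This uniformity rests on Lemma~\ref{Lem4.1} (uniform $H^1$-bounds), the profile decomposition having only finitely many ($\le m_j$) bumps for each fixed $j$, and the uniform tail estimate \eqref{eq4.14}; once these are in hand, the iteration proceeds exactly as in Lemma~\ref{Lem4.8} and delivers the stated exponential localization around $(\mathcal{A}^\delta)_\varepsilon$.
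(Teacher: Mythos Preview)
Your proposal is correct and follows essentially the same approach as the paper, which simply states that Lemma~\ref{Lem4.11} is obtained from the proof of Lemma~\ref{Lem4.8} line by line, with Lemma~\ref{Lem4.10} replacing Lemma~\ref{Lem4.7} and $\mathcal{A}^\delta$ replacing $U(\delta)$. Your write-up in fact spells out the argument more carefully than the paper does (e.g.\ taking the smallness region to be the complement of the union $\bigcup_i B(y_{j,\varepsilon}^i,R_0)$ rather than of a single ball), but the strategy and all the key steps are identical.
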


\section{Final remark} 1.~We note that the all conclusions of Theorem \ref{Thm1.1}-Theorem \ref{Thm1.4} are still hold if $f$ is only continuous. In fact, the operator $A_\varepsilon$ which defined in section 3 is not applicable to construct a descending flow for $\Phi_\varepsilon$ since $A_\varepsilon$ may be only continuous. Fortunately, using the same argument in \cite[Lemma 4.1]{Bartsch-Liu2004JDE} and \cite[Lemma 2.1]{Bartsch-Liu-Weth2005PLMS}, there exists a locally Lipschitz continuous operator $B_\varepsilon$ which inherits the main properties of $A_\varepsilon$. More precisely, $B_\varepsilon$ is defined on $X_0$ and there exists $\sigma_0>0$ such that
\begin{itemize}
\item[$(i)$]  $B_\varepsilon(\partial P_\sigma^+)\subset P_\sigma^+$ and $B_\varepsilon(\partial P_\sigma^-)\subset P_\sigma^-$ for $\sigma\in (0,\sigma_0)$;
\item[$(ii)$]  $\frac{1}{2}\|u-B_\varepsilon(u)\|\leq \|u-A_\varepsilon(u)\|\leq 2\|u-B_\varepsilon(u)\|$ for all $u\in X_0$;
\item[$(iii)$]  $\langle \Phi_\varepsilon^\prime(u),u-B_\varepsilon(u)\rangle\geq \frac{1}{2}\|u-A_\varepsilon(u)\|^2$ for all $u\in X_0$;
\item[$(iv)$]  $B_\varepsilon$ is odd;
\end{itemize}
where $X_0=H^1(\mathbb{R}^3)\setminus K_\varepsilon$ and $K_\varepsilon$ denotes the set of fixed points of $A_\varepsilon$, which is exactly the set of critical points of $\Phi_\varepsilon$. Consequently, the operator $B_\varepsilon$ satisfies assumptions $(A_1)$ and $(A_2)$. Therefore, we always assume that $A_\varepsilon$ is locally Lipschitz continuous on $X_0$ and then all conclusions are hold.
\par
\vspace{3mm}
2.~Our method is works for Schr\"odinger equation. Precisely, we consider the following nonlinear Schr\"odinger equation
\begin{equation*}
-\varepsilon^2 \Delta u+V(x)u=K(x)f(u),~\text{in}~\mathbb{R}^N
\end{equation*}
where $\varepsilon>0$ is a small parameter and $N\geq 2$. Under the assumptions $(f_1)$-$(f_4)$, $(V)$, $(K)$, $(VK1)$ and $(VK2)$, all conclusions of Theorem \ref{Thm1.1}-Theorem \ref{Thm1.4} are still hold. Thus, the result generalizes the result in \cite{Chen-Wang2017CVPDE}.
\par
\vspace{3mm}
3.~As mentioned in \cite{Chen-Wang2017CVPDE}, the conclusions of Theorem \ref{Thm1.3} seems to hold for nonlinear Schr\"odinger-Poisson system with a critical frequency, i.e., we consider the following system
\begin{equation}\label{eq5.1}
\begin{cases}
-\varepsilon^2 \Delta u+V(x)u+\phi u=f(u),&\text{in}~\mathbb{R}^3,\\
-\varepsilon^2 \Delta \phi=u^2,&\text{in}~\mathbb{R}^3,
\end{cases}
\end{equation}
where $f$ satisfies $(f_1)$-$(f_4)$ above and $V$ satisfies
\begin{itemize}
\item[$(V_1)$]  $0=\inf\limits_{x\in \mathbb{R}^3}V(x)<\liminf\limits_{|x|\rightarrow\infty}V(x)$.
\item[$(V_2)$]  There exists a closed subset $\mathcal{Z}$ with a nonempty interior such that $V(x)=0$ for $x\in \mathcal{Z}$.
\end{itemize}
We can prove the following Theorem, and we leave the details to the interested readers.
\begin{Thm}\label{Thm5.1}
Assume that $(f_1)$-$(f_4)$ and $(V_1)$-$(V_2)$ hold. Then for any positive integer $k$, there exists $\varepsilon_k > 0$ such that if $0 <\varepsilon <\varepsilon_k$, system \eqref{eq5.1} has at least $k$ pairs of sign-changing solutions $\pm v_{j,\varepsilon}, j = 1, 2, \cdots, k$. Moreover, for any $\delta> 0$, there exist $c = c(\delta, k) > 0$ and $C = C(\delta, k) > 0$ such that
\begin{equation*}
|v_{j,\varepsilon}(x)|\leq C\text{exp}\bigg(-\frac{c\text{dist}(x,\mathcal{Z}^\delta)}{\varepsilon}\bigg)~\text{for}~x\in \mathbb{R}^3,~j=1,\cdots,k.
\end{equation*}
\end{Thm}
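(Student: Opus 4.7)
The plan is to mimic the proofs of Theorems \ref{Thm1.1} and \ref{Thm1.3}, with the role of the critical-point set $\mathcal{A}$ played by the zero set $\mathcal{Z}$ of $V$; the only genuinely new ingredient is the localisation of the concentration profiles inside $\mathcal{Z}$. First I would choose an auxiliary bounded smooth domain $\Lambda \subset \mathbb{R}^3$ with $\mathcal{Z} \subset \Lambda$ and $V(x) \geq v_0 > 0$ on $\mathbb{R}^3 \setminus \Lambda$ (possible by $(V_1)$), and run the penalization of Section 2.2 with $K \equiv 1$, defining $\chi_\varepsilon$, $Q_\varepsilon$, and the modified functional $\Phi_\varepsilon$. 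The $(PS)_c$ verification of Lemma \ref{Lem2.2} adapts by splitting $\mathbb{R}^3$ into $\Lambda_\varepsilon$, where coercivity comes from the Laplacian plus the penalization, and $\mathbb{R}^3 \setminus \Lambda_\varepsilon$, where $V(\varepsilon x) \geq v_0$ gives $L^2$-control directly. The framework of Section 3 then produces infinitely many sign-changing critical points $\pm u_{j,\varepsilon}$ of $\Phi_\varepsilon$.

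To drive concentration I need an $\varepsilon$-uniform ceiling on $c_j^\varepsilon$. In the construction of Lemma \ref{Lem3.6} I would require each test function $v_i$ to have compact support inside the non-empty interior $\mathcal{Z}^\circ$, so that $V \equiv 0$ on $\bigcup_i \mathrm{supp}(v_i)$ and hence
\begin{equation*}
\Phi_\varepsilon(\varphi_n(t)) \leq \mathcal{J}_0(\varphi_n(t)),\qquad \mathcal{J}_0(u) := \tfrac{1}{2}\|\nabla u\|_2^2 + \tfrac{1}{4}\int_{\mathbb{R}^3}\phi_u u^2\,dx - \int_{\mathbb{R}^3} F(u)\,dx,
\end{equation*}
giving $c_j^\varepsilon \leq \tilde{c}_j := \sup_{H_j}\mathcal{J}_0$ independent of $\varepsilon$. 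The boundedness of $\|u_{j,\varepsilon}\|$, the $L^\infty$ bound, and the $\varepsilon^3$-decay outside $(\Lambda_\varepsilon)^\delta$ then follow as in Lemmas \ref{Lem4.1}--\ref{Lem4.2} verbatim.

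The main obstacle is the concentration step, i.e.\ the analogue of Lemma \ref{Lem4.7}: here the Pohozaev identity is unusable because $\nabla V \equiv 0$ on $\mathcal{Z}^\circ$ and no rigidity comes from the boundary of $\Lambda$. Instead I would argue by energy comparison. Apply the profile decomposition of Lemma \ref{Lem4.3} to $u_{j,\varepsilon_n}$ and suppose $V(y^i_j) > 0$ for some limit point $y^i_j = \lim_{n\to\infty}\varepsilon_n y^i_{j,\varepsilon_n}$. By the analogue of Lemma \ref{Lem4.4} the profile $U^i_j$ solves the constant-potential Schr\"odinger--Poisson limit equation with potential $V(y^i_j)$, so its energy is bounded below by the minimax level $m(V(y^i_j))$ of the corresponding functional, and $a\mapsto m(a)$ is strictly increasing. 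Combined with a Brezis--Lieb type subadditivity of $\Phi_{\varepsilon_n}(u_{j,\varepsilon_n})$ across the finitely many profiles, together with the uniform bound $c_j^\varepsilon \leq \tilde{c}_j$ (which is built from a zero-potential functional), this forces a strict inequality contradicting $\Phi_{\varepsilon_n}(u_{j,\varepsilon_n}) \to c_j \leq \tilde{c}_j$. Hence every $y^i_j$ lies in $\mathcal{Z}$.

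Once $\varepsilon y^i_{j,\varepsilon}$ is known to accumulate on $\mathcal{Z}$, the exponential decay of $u_{j,\varepsilon}$ outside $(\mathcal{Z}^\delta)_\varepsilon$ follows by the cut-off/Moser iteration of Lemma \ref{Lem4.8} with $\mathcal{Z}^\delta$ in place of $U(\delta)$, using the smallness of $\|u_{j,\varepsilon}\|_{L^p}$ on that region and the non-negativity of $V$. The decay makes $\int_{\mathbb{R}^3}\chi_\varepsilon u_{j,\varepsilon}^2 \to 0$, so $Q_\varepsilon(u_{j,\varepsilon}) = 0$ for small $\varepsilon$, meaning $u_{j,\varepsilon}$ is a true solution of the rescaled system; setting $v_{j,\varepsilon}(x) := u_{j,\varepsilon}(x/\varepsilon)$ solves \eqref{eq5.1}. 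The hardest piece is the energy-comparison argument in the third paragraph: proving strict monotonicity of $a \mapsto m(a)$ and clean subadditivity over profiles in the presence of the nonlocal Coulomb term $\phi_u u$, which does not split under translations as cleanly as a purely local nonlinearity and therefore requires a careful passage to the limit in the quartic term.
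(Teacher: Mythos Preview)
The paper supplies no proof of Theorem~\ref{Thm5.1}; it is stated as a final remark with the details ``left to the interested readers'', so there is nothing to compare against line by line.  Your outline for the choice of $\Lambda$, the penalisation, the abstract critical-point machinery of Section~3, the uniform upper bound on $c_j^\varepsilon$ via test functions supported in $\mathcal Z^\circ$, and the transfer of Lemmas~\ref{Lem4.1}, \ref{Lem4.2}, \ref{Lem4.8}, \ref{Lem4.9} is a correct adaptation.

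The gap is exactly where you anticipate it, but your proposed fix does not close.  From the profile decomposition you obtain at best $c_j=\lim c_j^{\varepsilon_n}\ge\sum_i E(U_j^i)$, and if $V(y_j^i)>0$ for some $i$ you get $E(U_j^i)\ge m(V(y_j^i))>m(0)$.  However the only upper bound you have is $c_j\le\tilde c_j=\sup_{H_j}\mathcal J_0$ for a \emph{single, fixed} test set $H_j$; there is no a~priori relation between $\tilde c_j$ and the (unknown) number of profiles or the quantity $\sum_i m(0)$, so no contradiction is produced.  Strict monotonicity of $a\mapsto m(a)$ and Brezis--Lieb subadditivity are both plausible ingredients, but they do not combine with the crude bound $c_j\le\tilde c_j$ to force $V(y_j^i)=0$.  (Note also that $m(0)$, the ground-state level for $-\Delta U+\phi_U U=f(U)$ on $\mathbb R^3$, is not obviously positive or even achieved.)

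Your diagnosis that ``the Pohozaev identity is unusable because $\nabla V\equiv 0$ on $\mathcal Z^\circ$'' is slightly off: the identity in Lemmas~\ref{Lem4.7}/\ref{Lem4.10} is applied at a hypothetical concentration point $y_j^{i_0}$ \emph{outside} $\mathcal Z$, and what is needed there is $\nabla V(y_j^{i_0})\ne0$ together with the sign condition $\vec t_n\cdot\nabla\chi_{\varepsilon_n}\ge0$ near $\partial\Lambda$.  Under $(V_1)$--$(V_2)$ alone neither is automatic (a point with $V>0$ can still be a critical point of $V$), and that is the real obstruction.  A workable route is to take $\Lambda$ small enough that $\{\nabla V=0\}\cap\bar\Lambda\subset\mathcal Z$ and $\vec n\cdot\nabla V>0$ on $\partial\Lambda$, after which Lemma~\ref{Lem4.10} runs unchanged; your energy-comparison substitute, as written, does not supply the missing rigidity.
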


\section*{Acknowledgments}
We should like to thank the anonymous referee for his/her careful readings of our manuscript and the useful comments
made for its improvement. The work was supported by the National Science Foundation of China (NSFC11871242).

\bibliographystyle{plain}
\bibliography{System}

\begin{thebibliography}{10}

\bibitem{Alves-Soares2004JMAA}
C.~Alves and S.H. Soares.
\newblock On the location and profile of spike-layer nodal solutions to
  nonlinear {S}chr\"odinger equations.
\newblock {\em J. Math. Anal. Appl.}, 296:563--577, 2004.

\bibitem{Ambrosetti-Badiale-Cingolani1997ARMA}
A.~Ambrosetti, M.~Badiale, and S.~Cingolani.
\newblock Semiclassical states of nonlinear {S}chr\"odinger equations.
\newblock {\em Arch. Ration. Mech. Anal.}, 140:285--300, 1997.

\bibitem{Ambrosetti-Malchiodi-Secchi2001ARMA}
A.~Ambrosetti, A.~Malchiodi, and S.~Secchi.
\newblock Multiplicity results for some nonlinear {S}chr\"odinger equations
  with potentails.
\newblock {\em Arch. Ration. Mech. Anal.}, 159:253--271, 2001.

\bibitem{Azzollini-Avenia-Pomponio2010AIHPNL}
A.~Azzollini, P.~d'Avenia, and A.~Pomponio.
\newblock On the {S}chr\"odinger-{M}axwell equations under the effect of a
  general nonlinear term.
\newblock {\em Ann. Inst. H. Poincar\'{e} Anal. Non Lin\'{e}aire}, 27:779--791,
  2010.

\bibitem{Bartsch-Clapp-Weth2007MA}
T.~Bartsch, M.~Clapp, and T.~Weth.
\newblock Configuration spaces, transfer and 2-nodal solutions of semiclassical
  nonlinear {S}chr\"odinger equation.
\newblock {\em Math. Ann.}, 338:147--185, 2007.

\bibitem{Bartsch-Liu2004JDE}
T.~Bartsch and Z.~Liu.
\newblock On a superlinear elliptic {$p$}-{L}aplacian equation.
\newblock {\em J. Differential Equations}, 198:149--175, 2004.

\bibitem{Bartsch-Liu-Weth2005PLMS}
T.~Bartsch, Z.~Liu, and T.~Weth.
\newblock Nodal solutions of a {$p$}-{L}aplacian equation.
\newblock {\em Proc. London Math. Soc.}, 91:129--152, 2005.

\bibitem{Bartsch-Wang1995CPDE}
T.~Bartsch and Z.~Wang.
\newblock Existence and multiplicity results for some superlinear elliptic
  problems on $\mathbb{R}^n$.
\newblock {\em Comm. Partial Differential Equations}, 20:1725--1741, 1995.

\bibitem{Benci-Fortunato1998TMNA}
V.~Benci and D.~Fortunato.
\newblock An eigenvalue problem for the {S}chr\"odinger-{M}axwell equations.
\newblock {\em Topol. Methods Nonlinear Anal.}, 11:283--293, 1998.

\bibitem{Benci-Fortunato2002RMP}
V.~Benci and G.~Fortunato.
\newblock Solitary waves of the nonlinear {K}lein-{G}ordon equation coupled
  with the {M}axwell equations.
\newblock {\em Rev. Math. Phys.}, 14:409--420, 2002.

\bibitem{Benguria-Brezis-Lieb1981CMP}
R.~Benguria, H.~Brezis, and E.H. Lieb.
\newblock The {T}homas-{F}ermi-von {W}eizs$\ddot{a}$cker theory of atoms and
  molecules.
\newblock {\em Comm. Math. Phys.}, 79:167--180, 1981.

\bibitem{Byeon-Wang2002ARMA}
J.~Byeon and Z.~Wang.
\newblock Standing waves with a critical frequency for nonlinear
  {S}chr\"odinger equations.
\newblock {\em Arch. Ration. Mech. Anal.}, 165:295--316, 2002.

\bibitem{Byeon-Wang2003CVPDE}
J.~Byeon and Z.~Wang.
\newblock Standing waves with a critical frequency for nonlinear
  {S}chr\"odinger equations, {II}.
\newblock {\em Calc. Var. Partial Differential Equations}, 18:207--219, 2003.

\bibitem{Chen-Liu-Wang2019JFA}
S.~Chen, J.~Liu, and Z.~Wang.
\newblock Localized nodal solutions for a critical nonlinear {S}chr\"odinger
  equation.
\newblock {\em J. Funct. Anal}, 277:594--640, 2019.

\bibitem{Chen-Wang2017CVPDE}
S.~Chen and Z.~Wang.
\newblock Localized nodal solutions of higher topological type for
  semiclassical nonlinear {S}chr\"odiner equations.
\newblock {\em Calc. Var. Partial Differential Equations}, 56:1--26, 2017.

\bibitem{Aprile-Mugnai2004PRSE}
T.~D'Aprile and D.~Mugnai.
\newblock Solitary {W}aves for nonlinear {K}lein-{G}ordon-{M}axwell and
  {S}chr\"odinger-{M}axwell equations.
\newblock {\em Proc. Roy. Soc. Edinburgh Sect. A}, 134:1--14, 2004.

\bibitem{Aprile-Pistoia2007MA}
T.~D'Aprile and A.~Pistoia.
\newblock On the number of sign-changing solutions of a semiclassical nonlinear
  {S}chr\"odinger equation.
\newblock {\em Adv. Differ. Equ.}, 12:737--758, 2007.

\bibitem{Aprile-Pistoia2009AIHPNL}
T.~D'Aprile and A.~Pistoia.
\newblock Existence, multiplicity and profile of sign-changing clustered
  solutions of a semiclassical nonlinear {S}chr\"odinger equation.
\newblock {\em Ann. Inst. H. Poincar\'{e} Anal. Non Lin\'{e}aire},
  26:1423--1451, 2009.

\bibitem{Aprile-Ruiz2011MZ}
T.~D'Aprile and D.~Ruiz.
\newblock Positive and sign-changing clusters around saddle points of the
  potential for nonlinear elliptic problems.
\newblock {\em Math. Z.}, 268:605--634, 2011.

\bibitem{Aprile-Wei2005SIAM}
T.~D'Aprile and J.~Wei.
\newblock On bound states concentrating on spheres for the
  {M}axwell-{S}chr\"odinger equtaion.
\newblock {\em SIAM J. Math. Anal.}, 37:321--342, 2005.

\bibitem{Floer-Weinstein1986JFA}
A.~Floer and A.~Weinstein.
\newblock Nonspreading wave pachets for the packets for the cubic
  {S}chr\"odinger with a bounded potential.
\newblock {\em J. Funct. Anal}, 69:397--408, 1986.

\bibitem{Gao-Guo2020JMP}
F.~Gao and Y.~Guo.
\newblock Localized nodal solutions for p-{L}aplacian equations with critical
  exponents.
\newblock {\em J. Math. Phys.}, 61:051501, 2020.

\bibitem{Gilbarg-Trudinger1983book}
D.~Gilbarg and N.~Trudinger.
\newblock {\em Eiilptic {P}artial {D}ifferential {E}equations of {S}econd
  {O}rder}.
\newblock Springer, Berlin, 1983.

\bibitem{He2011ZAMP}
X.~He.
\newblock Multiplicity and concentration of positive solutions for the
  {S}chr\"odinger-{P}oisson equations.
\newblock {\em Z. Angew. Math. Phys.}, 62:869--889, 2011.

\bibitem{He-Zou2012JMP}
X.~He and W.~Zou.
\newblock Existence and concentration of ground states for
  {S}chr\"odinger-{P}oisson equations with critical growth.
\newblock {\em J. Math. Phys.}, 53:023702, 2012.

\bibitem{He-Li2015AASFM}
Y.~He and G.~Li.
\newblock Standing waves for a class of {S}chr\"odinger-{P}oisson equations in
  $\mathbb{R}^3$ involving critical {S}obolev exponents.
\newblock {\em Ann. Acad. Sci. Fenn. Math}, 40:729--766, 2015.

\bibitem{Huang-Rocha-Chen2013JMAA}
L.~Huang, E.~Rocha, and J.~Chen.
\newblock Positive and sign-changing solutions of a {S}chr\"dinger-{P}oisson
  system involving a critical nonlinearity.
\newblock {\em J. Math. Anal. Appl.}, 408:55--69, 2013.

\bibitem{Ianni-Vaira2008ANS}
I.~Ianni and K.~Vaira.
\newblock On concentration of positive bound states for the
  {S}chr\"odinger-{P}oisson problem with potentials.
\newblock {\em Adv. Nonlinear Stud.}, 8:573--595, 2008.

\bibitem{Kang-Wei2000ADE}
X.~Kang and J.~Wei.
\newblock On interacting bumps of semi-classical states of nonlinear
  {S}chr\"odinger equations.
\newblock {\em Adv. Differ. Equ.}, 5:899--928, 2000.

\bibitem{Kim-Seok2012CCM}
S.~Kim and J.~Seok.
\newblock On nodal solutions of the nonlinear {S}chr\"odinger-{P}oisson
  equations.
\newblock {\em Comm. Contemp. Math.}, 14:1250041, 2012.

\bibitem{Lieb-Loss2001book}
E.~Lieb and M.~Loss.
\newblock 'Analysis',Graduate Studies in Mathematics. AMS. Providence: Rhode
  island, 2001.

\bibitem{Lieb1981RMP}
E.H. Lieb.
\newblock Thomas-{F}ermi and related theories of atoms and molecules.
\newblock {\em Rev. Math. Phys.}, 53:603--641, 1981.

\bibitem{Lions1987CMP}
P.~Lions.
\newblock Solutions of {H}artree-{F}ock equations for {C}oulomb systems.
\newblock {\em Comm. Math. Phys.}, 109:33--97, 1987.

\bibitem{Liu-Liu-Wang2016JDE}
J.~Liu, X.~Liu, and Z.~Wang.
\newblock Sign-changing solutions for coupled nonlinear {S}chr\"odinger
  equations with critical growth.
\newblock {\em J. Differential Equations}, 261:7194--7236, 2016.

\bibitem{Liu-Liu-Wang2019JDE}
X.~Liu, J.~Liu, and Z.~Wang.
\newblock Localized nodal solutions for quasilinear {S}chr\"odinger equations.
\newblock {\em J. Differential Equations}, 267:7411--7461, 2019.

\bibitem{Liu-Wang-Zhang2016AdM}
Z.~Liu, Z.~Wang, and J.~Zhang.
\newblock Infinitely many sign-changing solutions for the
  {S}chr\"odinger-{P}oisson system.
\newblock {\em Annali di. Matematica}, 195:775--794, 2016.

\bibitem{Markowich-Ringhofer-Schmeiser1990book}
P.A. Markowich, C.A. Ringhofer, and C.~Schmeiser.
\newblock {\em Semiconductor {E}quations}.
\newblock Springer-Verlag, Vienna, 1990.

\bibitem{DelPino-Felmer1996CVPDE}
M.~Del Pino and P.~Felmer.
\newblock Local mountain passes for semilinear elliptic problems in unbounded
  domains.
\newblock {\em Calc. Var. Partial Differential Equations}, 4:121--137, 1996.

\bibitem{DelPino-Felmer1998AIHPNL}
M.~Del Pino and P.~Felmer.
\newblock Multi-peak bound states for nonlinear {S}chr\"odinger equations.
\newblock {\em Ann. Inst. H. Poincar\'{e} Anal. Non Lin\'{e}aire}, 15:127--149,
  1998.

\bibitem{Rabinowitz1992ZAMP}
P.H. Rabinowitz.
\newblock On a class of nonlinear {S}chr\"odinger equations.
\newblock {\em Z. Angew. Math. Phys.}, 43:270--291, 1992.

\bibitem{Ruiz2006JFA}
D.~Ruiz.
\newblock The {S}chr\"odinger-{P}oisson equation under the effect of a
  nonlinear local term.
\newblock {\em J. Funct. Anal.}, 237:655--674, 2006.

\bibitem{Ruiz-Vaira2011RMI}
D.~Ruiz and G.~Vaira.
\newblock Cluster solutions for the {S}chr\"odinger-{P}oisson-{S}later problem
  around a local minimum of the potential.
\newblock {\em Rev. Mat. Iberoam.}, 27:253--271, 2011.

\bibitem{Salvatore2006ANS}
A.~Salvatore.
\newblock Multiple solitary waves for a non-homogeneous
  {S}chr\"odinger-{M}axwell system in $\mathbb{R}^3$.
\newblock {\em Adv. Nonlinear Stud.}, 6:157--169, 2006.

\bibitem{Tintarev-Fieseler2007book}
K.~Tintarev and K.H. Fieseler.
\newblock {\em Concentration Compactness. Functional-Analytic Grounds and
  Applications}.
\newblock Imperial College Press, London, 2, 2007.

\bibitem{Wang-Tian-Xu-Zhang2013CVPDE}
J.~Wang, L.~Tian, J.~Xu, and F.~Zhang.
\newblock Existence and concentration of positive solutions for semilinear
  {S}chr\"odinger-{P}oisson systems in {$\Bbb{R}^3$}.
\newblock {\em Calc. Var. Partial Differential Equations}, 48:243--273, 2013.

\bibitem{Wang1993CMP}
X.~Wang.
\newblock On concentration of positive bound states of nonlinear
  {S}chr\"odinger equations.
\newblock {\em Comm. Math. Phys.}, 153:229--244, 1993.

\bibitem{Wang-Zhou2007DCDS}
Z.~Wang and H.~Zhou.
\newblock Positive solution for a nonlinear stationary
  {S}chr\"odinger-{P}oisson system in {$\mathbb{R}^3$}.
\newblock {\em Discrete Contin. Dyn. Syst.}, 18:809--816, 2007.

\bibitem{Wang-Zhou2015CVPDE}
Z.~Wang and H.~Zhou.
\newblock Sign-changing solutions for the nonlinear {S}chr\"odinger-{P}oisson
  system in $\mathbb{R}^3$.
\newblock {\em Calc. Var. Partial Differential Equations}, 52:927--943, 2015.

\bibitem{Yang2020NA}
M.~Yang.
\newblock Existence of semiclassical solutions for some critical
  {S}chr\"odinger-{P}oisson equations with potentials.
\newblock {\em Nonlinear Anal.}, 198:111874, 2020.

\bibitem{Yang-Ding2010SCM}
M.~Yang and Y.~Ding.
\newblock Existence of semiclassical solutions for a class of
  {S}chr\"odinger-{M}axwell equations.
\newblock {\em Sci. China Math.}, 40:575--591, 2010.

\bibitem{Zhang-Xia2016JDE}
X.~Zhang and J.~Xia.
\newblock Semi-classical solutions for {S}chr\"odinger-{P}oisson equations with
  a critical frequency.
\newblock {\em J. Differential Equations}, 261:7194--7236, 2016.

\bibitem{Zhao-Liu-Zhao2013JDE}
L.~Zhao, H.~Liu, and F.~Zhao.
\newblock Existence and concentration of solutions for the
  {S}chr\"odinger-{P}oisson equations with steep well potential.
\newblock {\em J. Differential Equations}, 255:1--23, 2013.

\bibitem{Zhao-Zhao2008JMAA}
L.~Zhao and F.~Zhao.
\newblock On the existence of solutions for the {S}chr\"odinger-{P}oisson
  equations.
\newblock {\em J. Math. Anal. Appl.}, 346:155--169, 2008.

\bibitem{Zhong-Tang2018NAWRA}
X.~Zhong and C.~Tang.
\newblock Ground state sign-changing solutions for a {S}chr\"odinger-{P}oisson
  system with a critical nonlinearity in $\mathbb{R}^3$.
\newblock {\em Nonlinear Anal. Real World Appl.}, 39:166--184, 2018.

\end{thebibliography}

\end{document}